  \thmt@suspendcounter{\thmt@envname}{#1}%
\newtheorem{theorem}{Theorem}[section]
\newtheorem{lemma}[theorem]{Lemma}
\newtheorem{prop}[theorem]{Proposition}
\newtheorem{cor}[theorem]{Corollary}
\newtheorem*{question}{Question}
\theoremstyle{definition}
\newtheorem{definition}[theorem]{Definition}
\newtheorem{example}[theorem]{Example}
\newtheorem{hyp}{Hypothesis}
\theoremstyle{remark}
\newtheorem{remark}[theorem]{Remark}
\newtheorem*{notation}{Notation}
\numberwithin{equation}{section}
\newcommand{\olsi}[1]{\,\overline{\!{#1}}} 
\newcommand{\M}{\olsi{M}}
\newcommand{\di}{\mathrm{d}}
\newcommand{\supp}{\mathrm{spt} \, }
\newcommand{\lie}{\mathscr{L}}
\newcommand{\g}{\bar{g}}
\newcommand{\bu}{\bar{u}}
\newcommand{\ric}{\operatorname{Ric}}
\newcommand{\Ric}{\overline{\ric}}
\newcommand{\bnabla}{\overline{\nabla}}
\newcommand{\balpha}{\bar{\alpha}}
\newcommand{\tr}{\operatorname{tr}}
\newcommand{\R}{\mathbb{R}}
\newcommand{\N}{\mathbb{N}}
\newcommand{\LL}{\mathbb{L}}
\newcommand{\Y}{\mathscr{Y}_\tau(\Sigma)}
\newcommand{\YO}{\mathscr{Y}_\varphi(\Omega)}
\newcommand{\loc}{\mathrm{loc}}
\newcommand{\lip}{\operatorname{Lip}}
\newcommand{\Hc}{\overline{\mathrm{H}}}
\newcommand{\cM}{\mathscr{M}}
\newcommand{\eps}{\varepsilon}
\newcommand{\norm}[1]{\left\Vert #1 \right\Vert}
\newcommand{\abs}[1]{\left\vert #1 \right\vert}
\newcommand{\second}{\operatorname{II}}
\newcommand{\X}{\mathfrak{X}}
\newcommand{\bpart}{\bar{\partial}}
\newcommand{\set}[2]{\left\{ \ #1 \ : \ #2 \ \right\}}
\newcommand{\diver}{\operatorname{div}}
\newcommand{\disp}{\displaystyle}
\newcommand{\taueq}{\overset{\tau}{=}}
\newcommand{\ee}{\g_{\mathrm{E}}^\tau}
\def\top{\ThisStyle{\abovebaseline[0pt]{\scalebox{-1}{$\SavedStyle\perp$}}}}
\newcommand{\nocontentsline}[3]{}
\let\origcontentsline\addcontentsline
\newcommand\stoptoc{\let\addcontentsline\nocontentsline}
\newcommand\resumetoc{\let\addcontentsline\origcontentsline}
\begin{document}

\title[Prescribing the mean curvature as a measure]{Prescribing the mean curvature of an achronal hypersurface as a measure: the case of 3D spacetimes.}



\author{Lorenzo Maniscalco}
\address{Dipartimento di Matematica ``Giuseppe Peano", 
Universit\`a degli Studi di Torino,
10123 Torino, Italy}
\curraddr{}
\email{lorenzo.maniscalco@unito.it}
\thanks{}

\author{Luciano Mari}
\address{Dipartimento di Matematica ``Federigo Enriques",
Universit\`a degli Studi di Milano,
20133 Milano, Italy}
\curraddr{}
\email{luciano.mari@unimi.it (corresponding author)}
\thanks{The second author is supported by the PRIN 20225J97H5 ``Differential-geometric aspects of manifolds via Global Analysis"}



\date{\today}

\dedicatory{}


\begin{abstract}
We study the existence problem for achronal hypersurfaces $M \hookrightarrow \overline{M}$ in a globally hyperbolic spacetime, whose mean curvature is a prescribed -- possibly singular -- source, and whose boundary is a given smooth spacelike submanifold. Since $M$ is allowed to go null somewhere, the mean curvature prescription is to be understood in the distributional sense. We prove a general existence and regularity theorem  for surfaces in ambient dimension $3$. Although most of our estimates hold in any dimension, recent counterexamples show that some of our conclusions fail in ambient dimension at least $5$. The case of $4$D-spacetimes is an open problem. Our theorems have application to Born-Infeld electrostatics in general static spacetimes. 
\end{abstract}

\subjclass{Primary: 
58J32, 
53C40, 
53C42, 
83C50 
Secondary:
35Q75, 
53C50
}

\keywords{Lorentzian mean curvature, Born-Infeld model, light segment, singularity, maximal hypersurface}

\maketitle

\tableofcontents

\section{Introduction}



In the present work we consider the existence problem for spacelike hypersurfaces 
\[
M^m \subseteq \M^{m+1} 
\]
with prescribed mean curvature and fixed boundary in a (boundaryless) spacetime $(\M,\g)$.  
It is known that such hypersurfaces are important  both from a physical and from a mathematical viewpoint: for instance, they can be used as convenient initial data for the Einstein Field Equations \cite{choquetbruhat}, or in connection with positive mass theorems and related geometric inequalities \cite{mt,huisken_yau}. The existence and regularity problem was extensively studied in the 1980s, when general results were obtained by Bartnik \& Simon \cite{BS}, Gerhardt \cite{gerhardt83} and Bartnik \cite{bartnik84,bartnik88} if the prescribed mean curvature is locally bounded and sufficiently regular (see also Flaherty \cite{flaherty} and Audonet \& Bancel \cite{ab}). In particular, Bartnik in \cite[Theorem 4.1]{bartnik88} considered a given spacelike hypersurface $\Sigma$ in a globally hyperbolic spacetime $\M$, and a $C^1$ function $\Hc$ on the tangent bundle $T\M$. Under mild assumptions on $\M,\Sigma$ and $\Hc$, he showed that problem
\begin{align}\label{prob_general_BI}
    \begin{cases}
        H_M(x) = \Hc(N_M(x)) \qquad \text{for } \, x \in M \\
        \partial M = \partial \Sigma
    \end{cases}
\end{align}
admits a spacelike solution $M$, where $N_M$ is the future-pointing unit normal field of $M$ and $H_M$ the mean curvature in direction $N_M$.

From a mathematical perspective, \eqref{prob_general_BI} poses several challenges. A major issue is that a candidate solution, obtained for instance by energy minimization or as a limit of approximating solutions, might fail to be spacelike somewhere (``go null", in the terminology of \cite{mt}). In regions where $M$ goes null equation \eqref{prob_general_BI} loses its sense pointwise, and it is interesting to understand which kind of singularities may appear. Among them, isolated point singularities were probably the most studied. In Lorentz-Minkowski's space $\LL^{m+1}$, by explicit integration Born \cite{born} constructed a radially symmetric maximal hypersurface ($\Hc \equiv 0$) away from a point $o \in M$ where $M$ is asymptotically a light cone, and in fact the resulting singularity at $o$ is a multiple of a Dirac-delta measure. Such behaviour of isolated, nonremovable singularities was shown to be typical by Ecker \cite{ecker}, see also \cite{bcf} and \cite[Thm. 1.17]{BIMM}. 
Maximal surfaces with point singularities in the whole of $\LL^3$ were classified by Kobayashi \cite{koba}, Klyachin \cite{klyachin_desc} and Fern\'andez, L\'opez \& Souam \cite{fls}, while G\'alvez, Jim\'enez \& Mira \cite{galvez_jimenez_mira} characterized the behaviour near isolated singularities of surfaces in $\LL^3$ with smooth, possibly non-constant mean curvature. 

Less is known about more complicated singularities. Of particular importance for our investigation is the case where $M$ contains \emph{light segments}. To our knowledge, the first example of a maximal surface in $\LL^3$ with a (compact) light segment is due to Pryce, see \cite[Example XI]{pryce}, and further examples in $\LL^3$ containing an entire light line were later produced in \cite{various,uy_light,auy}. With no claim of completeness, we also refer the reader to:
\begin{itemize}
    \item \cite{esturome,fls}, for an investigation of the structure of maximal surfaces with singularities;
    \item \cite{uy,fsuy}, for a detailed description of some classes of maximal surfaces with controlled singular set;
    \item \cite{klyachin_mixed,uy_light_0,uy_light}, for a local study near points where the tangent space is lightlike.
\end{itemize}

The fact that Dirac measures appear as the mean curvature of isolated point singularities may suggest that more complex singularities could also give rise to measures rather than more singular distributions. However, whether this is true, and what kinds of measures might arise, are questions that, to our knowledge, have not yet been addressed.

In our paper, we study problems like \eqref{prob_general_BI} for singular $\Hc$ modelled by a measure. We shall consider globally hyperbolic spacetimes $\M$, and refer to \cite{oneill,he} for the physical terminology used here. For the sake of simplicity, we only consider hypersurfaces $M,\Sigma$ which are achronal, i.e. such that every inextendible timelike curve intersects $M$ (respectively, $\Sigma$) at most once. We recall that the future Cauchy development $D^+(M)$ of an achronal set $M$ is the set of all points $p\in\M$ such that every past-pointing inextendible causal (i.e. timelike or null) curve meets $M$. The past Cauchy development $D^-(M)$ is accordingly defined, and the Cauchy development is defined as
\begin{align*}
    D(M)\doteq D^+(M)\cup D^-(M).
\end{align*}

For our purposes, we shall require that the ``Dirichlet data" $\Sigma$ is, loosely speaking, neither too large nor too close to a singularity, a property codified by the following 
\begin{hyp}[tag=(C)]\label{cauchy compatto}
    $D(\Sigma)$ is precompact in $\M$. 
\end{hyp}
This assumption already appears in \cite{bartnik88} and is there extensively commented. A connected subset $M \subseteq\M$ will be named 
\begin{itemize}
    \item[-] a \emph{weakly spacelike hypersurface} if it is an embedded, achronal locally Lipschitz hypersurface with boundary, which is closed in $\M$;
    \item[-] a \emph{spacelike hypersurface} if it is a weakly spacelike, $C^1$ hypersurface with boundary  and has timelike unit normal vector up to the boundary.
    \item[-] a \emph{smooth spacelike hypersurface} if it is spacelike and it is a $C^\infty$ hypersurface with boundary.
    \end{itemize} 
\begin{remark}\label{rem_cauchycompact_intro}
    Note that in Lorentz-Minkowski's space $\LL^{m+1}$ any compact, spacelike $\Sigma$ automatically satisfies Hypothesis \ref{cauchy compatto}. More generally, this is also the case for compact spacelike $\Sigma$ in static spacetimes
    \[
    \M = \R \times S \qquad \text{with  metric} \quad \g = - (\pi^*f)^2d\tau^2 + \bar \sigma, 
    \]
    where $\pi : \M \to S$ is the product onto the first factor, $f \in C^1(S)$, $\bar \sigma = \pi^* \sigma$ and $\sigma$ is a Riemannian metric on $S$.
\end{remark}

We assume that $\Sigma$ is a smooth compact spacelike hypersurface, and seek for hypersurfaces $M$ in the set 
\begin{align*}
        \mathscr{Y}(\Sigma) \doteq \{M\subseteq \M \ : \ \text{$M$ weakly spacelike, $D(M) = D(\Sigma)$} \}.
\end{align*}
Then, \eqref{prob_general_BI} can be rephrased as a more familiar Dirichlet problem for functions. By \cite{Bernal:2005aa}, choosing a suitable time function $\tau$ (hereafter called a \emph{splitting time function}) we may write the globally hyperbolic spacetime $\M$ as 
\[
\M \taueq \R \times S 
\]
with metric 
\[
\g = \balpha^2 (-d\tau^2 + \bar \sigma),
\]
for a lapse function $0 < \balpha \in C^\infty(\M)$ and a non-negative tensor $\bar \sigma$ whose kernel is generated by $\bnabla \tau$. Let 
\[
T = - \frac{\bnabla \tau}{|\bnabla \tau|}
\]
be the future pointing normalization of $\bnabla \tau$, and let 
\[
\pi = \M \to S 
\]
be the projection onto the second factor. The global hyperbolicity of $\M$ and  condition $D(M) = D(\Sigma)$ imply both $\pi(M) = \pi(\Sigma)$ and $\partial M = \partial \Sigma$, see Lemmas \ref{lem_goodproj} and \ref{lem_stessafront}. Therefore, to each $M \in \mathscr{Y}(\Sigma)$ one may associate its height, defined as the unique function $u:\Sigma\to \R$ such that the graph map
    \begin{align*}
        F_u: \Sigma \to \R\times S\taueq\M \qquad F_u(x) = (u(x),\pi(x))
    \end{align*}
is an embedding with $F_u(\Sigma) = M$. 
The height of $\Sigma$ will be called $\varphi$. Writing $g_u\doteq F_u^*\g$, we have
\begin{align*}
    g_u = \alpha_u^2\left( \sigma_u - du^2\right)
\end{align*}
where $\alpha_u = F_u^*\balpha$ and $\sigma_u = F_u^*\bar\sigma$. Note that $M$ with its induced metric is identified via $F_u$ with $(\Sigma,g_u)$, that $\sigma_u$ is a Riemannian metric on $\Sigma$ and that
\begin{align*}
    \text{$M$ is spacelike} \quad &\iff \quad u\in C^1(\Sigma)  \quad \text{and}\quad \abs{du}_{\sigma_u} < 1 \\
    \text{$M$ is weakly spacelike} \quad &\iff \quad u\in \lip(\Sigma) \quad \text{and} \quad \abs{du}_{\sigma_u} \leq 1 \quad \text{a.e.}
\end{align*}
Denote by $\mathring\Sigma$ the relative interior of $\Sigma$. The choice of $\tau$ allows to identify $\mathscr{Y}(\Sigma)$ with
\begin{align*}
    \Y\doteq \set{u\in \lip(\Sigma)}{|du|_{\sigma_u}\leq 1 \ \, \text{a.e. on $\Sigma$}, \  \  \varphi=u \ \text{on $\partial\Sigma$} }.
\end{align*}



%
%
%
Wherever $|du|_{\sigma_u} < 1$ one can define the future pointing unit normal $N_u$ to $M$ and the associated \emph{tilt function}
\[
w_u = - \g(T,N_u) = \frac{1}{\sqrt{1-|du|^2_{\sigma_u}}}, 
\]
which plays a relevant role in existence theory. If $H_u : \mathring{\Sigma} \to \R$ is the scalar mean curvature of $M$ in direction $N_u$, \eqref{prob_general_BI} leads to study
\begin{equation}\label{eq_general_BI}
    \begin{cases}
        H_u= \Hc(F_u,N_u) & \text{in $\mathring\Sigma$,}   \\
        u = \varphi & \text{on $\partial\Sigma$}
    \end{cases}
\end{equation}
for suitable source terms $\Hc$ which are possibly singular. Equation \eqref{eq_general_BI} does not make sense pointwise where $|du|_{\sigma_u} = 1$, and so it needs a suitable weak definition.

\begin{remark}\label{rem_Omega}
  Problem \eqref{eq_general_BI} is a quasilinear equation for $u : \Sigma \to \R$. However, $\pi$ projects $\Sigma$ diffeomorphically onto the closure of a relatively compact domain $\Omega \Subset S$, and it is easy to see that each $M \in \mathscr{Y}(\Sigma)$ projects onto $\overline \Omega$ as well (see Lemma \ref{lem_goodproj}). Therefore, up to composing with $\pi_{|\Sigma}$ we can consider \eqref{eq_general_BI} as a Dirichlet problem in $\Omega$. For instance, in Lorentz-Minkowski's space
  \[
\LL^{m+1} = \R \times \R^m, \qquad \bar g = - \di x_0^2 + \sum_{j=1}^m \di x_j^2
  \]
  the first in \eqref{eq_general_BI} becomes
  \[
  \diver \left( \frac{Du}{\sqrt{1-|Du|^2}} \right) = \Hc\left(x,u(x), \frac{\partial_0 - u^j \partial_j}{\sqrt{1-|Du|^2}}\right) \qquad \text{in } \, \Omega \subseteq \R^m = \{x_0 = 0\}, 
  \]
  where $Du = u^j\partial_j$ is the Euclidean gradient of $u$. Since the mean curvature prescription problem is independent of the choice of the splitting time function $\tau$, we preferred to refer to the geometric Dirichlet data $\Sigma$ instead of the $\tau$-dependent set $\Omega$.
\end{remark}

A further reason to consider singular $\Hc$ comes from nonlinear electrodynamics. Indeed, hypersurfaces with prescribed mean curvature appear in the Born-Infeld theory for electromagnetism, see the surveys \cite{BI,Birula,Kiessling-legacy,Yang} and Appendix \ref{appe_BI} below. The model, proposed in \cite{BI} as an alternative to Maxwell’s theory, successfully solves the issue of infinite energy associated with point charges in Maxwell's description and possesses further significant properties from the physical viewpoint \cite{boillat,plebanski}. When considering the electrostatic case in $\R^m$, according to Born-Infeld's theory the electric potential $u$ generated by a charge distribution $\rho$ in $\Omega \subseteq \R^m$ (typically, a measure) minimizes the energy 
\begin{equation}\label{eq_energy_BI}
I_\rho(\psi) = \int_\Omega \left( 1- \sqrt{1-|D\psi|^2} \right)d V_\delta + \langle \rho, \psi \rangle 
\end{equation}
in $\YO$, the set of $\psi \in W^{1,\infty}(\Omega)$ attaining the boundary data $\varphi$ on $\partial \Omega$ and satisfying $|D\psi| \le 1$. Here, $dV_\delta$ is the Lebesgue measure on $\R^n$, $\langle \, , \, \rangle$ is the natural pairing between measures and continuous functions, and physical units are set to $1$ (a word of warning: in this paper, we adopt a sign convention on the electric potential $u$ which is opposite to that of \cite{BIMM}, resulting in the different choice of the sign of $\langle \rho, \psi \rangle$ in $I_\rho$). The Euler-Lagrange equations of $I_\rho$ are formally 
\begin{align}\label{BI}\tag{$\mathcal{BI}$}
    \begin{cases}
    \diver\left(\frac{Du}{\sqrt{1-|Du|^2}}\right) = \rho & \quad \text{in } \Omega, \\
    u = \varphi & \quad \text{on } \, \partial \Omega.
    \end{cases}
\end{align}
hence the graph of $u$ can be seen as a spacelike hypersurface in $\LL^{m+1}$ with prescribed mean curvature $\rho$ in the future pointing direction. Moreover, the tilt function $w_u$ relates to the energy density $T_{00}$ (i.e. the $(0,0)$ component of the stress-energy tensor associated to $I_\rho$) via the identity
\[
T_{00} = w_u -1 - \rho u,
\]
see the Appendix of \cite{BIMM} (note the different sign of $\rho u$ due to our convention). As we shall see later and in Appendix \ref{appe_BI}, Born-Infeld's electrostatics in more general static spacetimes still leads to an equation like \eqref{BI}, now set in a relatively compact domain $\Omega$ of a fixed spacelike slice. 

The functional $I_\rho$ is convex and admits a (unique) minimizer $u_\rho$, which is therefore the only candidate as a solution to \eqref{BI}. However, $I_\rho$ is not $C^1$ due to the singularity of the Lagrangian where $|D\psi|=1$, and this fact may prevent $u_\rho$ to actually solve \eqref{BI}. In recent years, the existence problem for \eqref{BI} received increasing attention since the influential paper \cite{bpd} by Bonheure, d'Avenia and Pomponio. Among others, we quote works by Klyachin-Miklyukov \cite{KM95,klyachin_desc}, Kiessling \cite{kiessling}, Bonheure, Colasuonno \& Foldes \cite{bcf} for $\rho$ a sum of Dirac sources:
\begin{equation}\label{eq_diracdeltas}
\rho = \sum_{j=1}^k a_j \delta_{x_j}, \qquad a_j \in \R, 
\end{equation}
which models interacting point charges, those by Bonheure \& Iacopetti \cite{boniaco_1,boniaco_2} and Haarala \cite{haa} for sources $\rho \in L^q(\R^m)$ with $q > m$, and the one by Byeon, Ikoma, Malchiodi \& Mari \cite{BIMM} for charges in $L^2_\loc$ away from a compact set of singularities with vanishing $1$-dimensional measure.  
%
%
%
%

%

To describe our main result and put it into context, we first observe that $\Y$ can be seen as a closed, convex subset of $W^{1,p}(\Sigma)$ with the induced topology, for some fixed $p \in (2,\infty)$. By Morrey's embedding, $\Y \hookrightarrow C(\Sigma)$ compactly.

\begin{notation}
Here and in what follows all metric quantities, Sobolev spaces and Hausdorff measures on $\Sigma$ will implicitely be taken with respect to the metric $\sigma_\varphi$, unless specified otherwise. This is not a restriction, since by Lemma \ref{conseguenze di C} below any choice of metric $\sigma_v$ with $v \in \Y$ leads to equivalent distances, measures, $L^p$ and $W^{1,p}$ norms. Moreover, boundedness in $W^{2,2}$ is independent of $\sigma_v$.
\end{notation}


We shall be interested in mean curvature prescriptions $H_u$ depending on pairs $(\rho,X)$, where:
\begin{itemize}
    \item $X$ is a continuous vector field on $\overline{D(\Sigma)}$;
    \item $\rho$ is a continuous map 
    \[
    \rho \ \  : \ \ (\Y, \|\cdot \|_{C(\Sigma)}) \ \to \ \mathscr{M}(\Sigma),
    \]
    where $\mathscr{M}(\Sigma)$ is the space of signed, finite Radon measures in  $\Sigma$ with the weak$^*$ topology,
\end{itemize}
and given by the identity 
\begin{equation}\label{eq_our_prescription}
H_u \, dV_{\sigma_u} = \rho(u) + \g(X, N_u) \, dV_{\sigma_u} \qquad \text{in } C_c^1(\mathring\Sigma)^*,
\end{equation}
that is, by integrating against $\eta \in C_c^1(\mathring\Sigma)$. Here, $dV_{\sigma_u}$ is The Riemannian volume measure of $\sigma_u$. 
Note that the presence of the unit timelike normal vector $N_u$ needs, at least, that the graph of $u$ is spacelike almost everywhere. If $\rho(u)$ is absolutely continuous with respect to some (hence all) volume measures $dV_{\sigma_v}$, $v\in \Y$, then $u$ solves \eqref{eq_BI} if and only if
    \begin{align*}
        H_u = \frac{d\rho(u)}{dV_{\sigma_u}} + \g(X,N) \qquad \text{as functions,}
    \end{align*}
where $\frac{d\rho(u)}{dV_{\sigma_u}}$ is the Radon-Nykodim derivative of $\rho(u)$ with respect to $dV_{\sigma_u}$

\begin{example}
Taking into account Remark \ref{rem_Omega}, in Minkowski space $\LL^{m+1}$ equation  \eqref{eq_our_prescription} becomes
\[
\diver \left( \frac{Du}{\sqrt{1-|Du|^2}} \right) d V_\delta = \rho(u) + \frac{X^j u_j-X^0}{\sqrt{1-|Du|^2}} d V_\delta \qquad \text{in } \, \Omega \subseteq \R^m,
\]
where $\delta$ is the Euclidean metric and $X = X^0 \partial_0 + X^j \partial_j$. 
In particular, for $a \in C^2(\Omega)$ the equation 
\[
\diver \left(\frac{e^{-a} Du}{\sqrt{1-|Du|^2}} \right) d V_\delta = e^{-a}\rho(u) \qquad \text{in } \, \Omega \subseteq \R^m
\]
is obtained by choosing $X^0 = 0$ and $X^j =\partial_j a$. 
\end{example}

\begin{example}
    The above class of $\rho$ includes maps of the type $u \mapsto \Hc(F_u) dV_{\sigma_u}$, where $F_u : \Sigma \to \M$ is the graph of $u$ and $\Hc$ is a continuous function in $\overline{D(\Sigma)}$. More generally, one can consider 
    \[
    \rho(u) = \vartheta + \Hc(F_u) dV_{\sigma_u}, 
    \]
where $\vartheta \in \cM(\Sigma)$ is a fixed measure. 
\end{example}

\begin{remark}
    It would be interesting to find an alternative description of the singular data $\rho$ in terms of a measure in $\M$, and the Hausdorff measure constructed through causal diamonds by McCann and S\"amann in \cite{mccannsamann} seems a natural candidate. A subtle point may be to properly formalize the fact that the measure is ``mildly varying in the time variable", here codified by the continuity of $\rho$.     
\end{remark}

One motivation to include the term $\g(X,N_u)$ in our mean curvature prescription is due to Born-Infeld's electrostatics in a general static spacetime
\[
V = \R \times S
\]
with metric 
\[
\langle \cdot,\cdot \rangle = - (\pi^*\alpha)^2d \tau^2 + \pi^*\sigma,
\]
where $\alpha \in C^\infty(S)$ and $\sigma$ is a smooth metric on $S$. In fact, as we shall see in Appendix \ref{appe_BI}, the electric potential generated by a charge $\rho$ on $(S,\sigma)$ satisfies
\begin{align}\label{eq_born_infeld_electrostatics_intro}
    \diver_\sigma \left(\frac{\alpha^{-1}Du}{\sqrt{1 - \alpha^{-2}|Du|^2}}\right) = \rho \qquad \text{in $S$.}
\end{align}
%
Whence, if we endow $V$ with the metric 
\[
\g = -(\pi^*\alpha)^{-2}d \tau^2 + \pi^*\sigma,
\]
the graph of $u$ in $(V,\g)$ is a spacelike hypersurface with mean curvature 
    \begin{align*}
        H_u = \rho + \g(X,N_u), 
    \end{align*}
where $X = \bnabla_{\alpha \partial_\tau}(\alpha \partial_\tau)$.\\[0.1cm]



We now describe our main theorem. First, observe that if $\Hc$ and $X$ are of class $C^1$ in $\overline{D(\Sigma)}$ the equation 
\[
H_u = \Hc(F_u) + \g(X,N_u) 
\]
is included in the existence theory developed by Bartnik in \cite{bartnik88}. However, as he pointed out at the end of the Introduction of \cite{bartnik88}, even when $X \equiv 0$ treating the problem with $\Hc$ less regular than $C^1$ needs new ideas. In Minkowski space $\LL^{m+1}$, Bartnik and Simon \cite{BS} were able to treat sources with $X \equiv 0$ and $\Hc$ continuous (in fact, $\Hc$ locally bounded and Carathéodory) by using different tools, some of them specific to the flat ambient space. However, even in this case the local boundedness of $\Hc$ is essential.

In Theorem \ref{main result} below, we allow $\rho(u)$ to be singular in a set $E \Subset \mathring\Sigma$ with vanishing $1$-dimensional Hausdorff measure, and locally $L^2$ in the complement. The request $\mathscr{H}^1(E)=0$ is to prevent that the graph of a possible solution contains light segments projecting into $E$. For reasons that will be discussed below, we shall restrict to surfaces in $2+1$-dimensional ambient spaces.

\begin{theorem}\label{main result}
    Let $\M$ be a globally hyperbolic spacetime of dimension $2+1$ and let $\Sigma$ be a smooth, compact spacelike hypersurface satisfying \ref{cauchy compatto}. Choose a splitting time function $\tau$ and let $\varphi \in C^\infty(\Sigma)$ be the height function of $\Sigma$. Consider a pair $(\rho,X)$, and assume that there exists $E \subseteq \mathring{\Sigma}$ compact with $\mathscr{H}^1
    (E)=0$ such that for each $\Sigma' \Subset \mathring{\Sigma} \backslash E$ the restriction
    \begin{equation}\label{ipo_L2omega'}
    \rho \llcorner \Sigma' \ : \  u \mapsto \rho(u) \llcorner \Sigma'
    \end{equation}
    is valued and bounded in $L^2(\Sigma')$. Then, the Dirichlet problem
    \begin{equation}\label{eq_BI}\tag{{\rm PMC}}
    \begin{cases}
        H_u dV_{\sigma_u} = \rho(u) + \g(X, N_u)dV_{\sigma_u} & \text{in $\Sigma$} \\
        u = \varphi & \text{on  $\partial\Sigma$}
    \end{cases}
    \end{equation}
    has a weak solution $u\in\Y\cap W^{2,2}_\loc(\Sigma\backslash E)$ with the following properties:
    \begin{itemize}
        \item[(i)] the graph of $u$ has no light segments;
        \item[(ii)] the tilt function $w_u$ satisfies
        \[
        w_u \in L^1(\Sigma) 
        \]
    and, for each $\Sigma' \Subset \mathring{\Sigma} \backslash E$,
        \begin{equation}\label{eq_higherinteg}
        \begin{array}{l}
        w_u \ln w_u \in L^1(\Sigma'), \\[0.3cm]
        w_u|D^2u|^2 + w_u^3 |D^2u(Du,\cdot)|^2 + w_u^5 |D^2u(Du,Du)|^2 \in L^1(\Sigma').
        \end{array}
        \end{equation}
        where $D$ and norms are taken in the metric $\sigma_u$.
        \item[(iii)] There exists a closed subset of measure zero $\mathscr{S}$ such that $w_u \in L^\infty_\loc(\mathring{\Sigma} \backslash \mathscr{S})$.
        \item[(iv)] If $\Sigma'\Subset \mathring{\Sigma}\backslash E$ is a domain such that $X$ is $C^1$ in $\overline{D(\Sigma)}\cap(\Sigma'\times\R)$ and $\rho\llcorner\Sigma'$ is valued and bounded in $C^1(\overline{\Sigma'})$, then $u\in C^{2,\alpha}_\loc(\Sigma')$ and is there spacelike. In particular, $\mathscr{S}\cap\Sigma' = \emptyset$. Furthermore, if $\rho$ and $X$ are smooth in $\Sigma'$ then so is $u$.
    \end{itemize}    
\end{theorem}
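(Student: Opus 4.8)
The plan is to combine a fixed-point scheme (to absorb the dependence of the source on $u$) with a variational construction and an elliptic regularization (to cope with the non-differentiability of the energy), and finally to read off (i)--(iv) from uniform a priori estimates. \textbf{Step 1 (solution operator and fixed point).} First I would fix $v\in\Y$, set $\mu\defeq\rho(v)\in\cM(\Sigma)$, and minimize over the convex set $\Y$ the Lorentzian area-type functional $I_\mu$ associated with \eqref{eq_BI} — the one that in Minkowski space reduces to a weighted version of \eqref{eq_energy_BI}, with $\langle\mu,\psi\rangle$ giving the coupling (continuous on $\Y$ since $\mu$ is finite) and the term $\g(X,N_u)$ producing an affine functional of $\psi$. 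By Lemma \ref{conseguenze di C} and the notation convention, $\Y$ is bounded in every $W^{1,p}$, hence $\Y\hookrightarrow C(\Sigma)$ is compact by Morrey; the direct method yields a minimizer, and the strict convexity of $p\mapsto 1-\sqrt{1-|p|^2}$ on the closed unit ball (together with the boundary condition) makes it unique, so we get a well-defined map $T\colon\Y\to\Y$. As $v$ varies, $\rho(\Y)$ is weak-$*$ compact in $\cM(\Sigma)$, hence norm bounded by Banach--Steinhaus; combining this with the $\Gamma$-convergence $I_{\rho(v_n)}\to I_{\rho(v)}$ when $v_n\to v$ in $C(\Sigma)$ shows $T$ is continuous, and Schauder's theorem produces a fixed point $u=T(u)$, which is the minimizer of $I_{\rho(u)}$ and the only candidate solution of \eqref{eq_BI}. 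It remains to show that such minimizers solve the weak equation and satisfy (i)--(iv); since all the estimates below depend on the data only through the uniform total-variation bound on $\rho(\Y)$ and the uniform $L^2(\Sigma')$-bounds \eqref{ipo_L2omega'}, it is enough to prove them for $T(v)$ with $v$ arbitrary.

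\textbf{Step 2 (uniform estimates and passage to the limit).} Next I would mollify $\mu=\rho(v)$ to $\mu_\eps$ and replace $1-\sqrt{1-|p|^2}$ by a smooth, strictly convex, superlinear Lagrangian $L_\eps$ agreeing with it on $\{|p|\le 1-\eps\}$; the resulting non-degenerate Dirichlet problem has a smooth solution $u_\eps$ by standard quasilinear elliptic theory. The heart of the matter is a family of estimates, uniform in $\eps$ and in $v$: a boundary gradient bound via barriers together with a comparison argument, giving $|du_\eps|_{\sigma_{u_\eps}}\le 1+o(1)$; the global bound $w_{u_\eps}\in L^1(\Sigma)$; and, on any $\Sigma'\Subset\mathring\Sigma\setminus E$, using the uniform $L^2(\Sigma')$-bound on $\rho\llcorner\Sigma'$ and a Bochner/Jacobi identity for the mean-curvature equation of the graph tested against cutoffs, the localized bounds $w_{u_\eps}\ln w_{u_\eps}\in L^1_\loc$ and the weighted second-derivative estimates \eqref{eq_higherinteg}. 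Letting $\eps\to0$, the weighted $D^2u_\eps$-bounds give $W^{2,2}_\loc(\Sigma\setminus E)$-compactness, the $w\ln w$-bound gives, by de la Vallée-Poussin, equi-integrability off $E$ of the flux $\sim du_\eps/\sqrt{1-|du_\eps|^2}$, and $\mathscr H^1(E)=0$ makes the contribution of a shrinking neighbourhood of $E$ negligible; hence the limit is $u=T(v)$, it solves \eqref{eq_BI} weakly, and (ii) holds. It is precisely here that $\dim\Sigma=2$ enters, through the Sobolev embeddings and the shape of the Bochner identity.

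\textbf{Step 3 (remaining properties).} For (i): a light segment projects to a segment $\gamma\subseteq\mathring\Sigma$ along which $u$ is affine with $|du|_{\sigma_u}=1$; since $\mathscr H^1(E)=0<\mathscr H^1(\gamma)$ I could take a subsegment $\gamma'\Subset\mathring\Sigma\setminus E$, and a blow-up analysis along $\gamma'$ would show that the degeneracy of $1-|du|^2_{\sigma_u}$ there is incompatible with the flux and second-fundamental-form control from \eqref{eq_higherinteg} and the $L^2$-bound on $\rho\llcorner\Sigma'$ — a contradiction. For (iii): the set $\mathscr S$ of points near which $w_u$ is not essentially bounded is closed by construction, and $|\mathscr S|=0$ because $w_u\in L^1(\Sigma)$ and, off $E$, $w_u$ enjoys the higher integrability \eqref{eq_higherinteg} (covering argument). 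For (iv): on $\Sigma'\Subset\mathring\Sigma\setminus E$ where $\rho\llcorner\Sigma'$ and $X$ are $C^1$, Bartnik's interior gradient estimate for spacelike prescribed-mean-curvature graphs — legitimate because there are no light segments by (i) — combined with a continuation argument gives $\sup_{\Sigma''}|du|_{\sigma_u}<1$ for every $\Sigma''\Subset\Sigma'$; the equation is then uniformly elliptic with Hölder-continuous right-hand side, so De Giorgi--Nash--Moser and Schauder give $u\in C^{2,\alpha}_\loc(\Sigma')$, and a bootstrap upgrades this to $C^\infty$ when $\rho,X$ are smooth; in particular $\mathscr S\cap\Sigma'=\emptyset$.

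\textbf{Main obstacle.} I expect the hard part to be Step 2: showing that the minimizer of the non-differentiable functional genuinely solves the weak equation. This hinges on making the localized higher-integrability estimates — the $w\ln w$ bound and the weighted $D^2u$ bounds \eqref{eq_higherinteg} — uniform, and on a delicate limit that must control at once the prescribed singular set $E$ (where $\mathscr H^1(E)=0$ is exactly what is needed to kill the flux near $E$) and the a priori unknown blow-up set $\{w_u=\infty\}$. It is also here that the ambient dimension is used essentially: since the cited counterexamples show that conclusions such as (i)--(ii) fail in ambient dimension $\ge 5$, the argument cannot be dimension-free and must genuinely exploit $\dim\Sigma=2$.
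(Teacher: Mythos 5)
There is a genuine gap, and it sits at the foundation of your construction. Your Step~1 builds the solution operator $T$ by minimizing a ``Lorentzian area-type functional associated with \eqref{eq_BI}'' and invokes strict convexity to get uniqueness. But for a general pair $(\rho,X)$ the problem \eqref{eq_BI} is \emph{not} variational: the term $\g(X,N_u)$ is not the Euler--Lagrange derivative of an affine (or any convex) functional of $u$, since $N_u$ depends nonlinearly on $du$, and the $u$-dependence of $\rho(u)$ destroys the variational structure as well. The paper stresses exactly this point: unlike \cite{BIMM}, here uniqueness fails and no energy functional is available, so ``the unique minimizer'' and ``the only candidate solution'' are not defined objects. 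The actual construction instead approximates the data by smooth pairs $(\rho_j,X_j)$ and solves the smooth Dirichlet problems \emph{exactly}, producing genuinely spacelike classical solutions $u_j$ via a Leray--Schauder fixed point for the quasilinear operator with frozen coefficients, where the essential a priori input is Bartnik's interior gradient estimate (the structure conditions \eqref{eq_structure conditions}). Your Schauder fixed point on $\Y$ through minimizers cannot be repaired without this change of strategy.

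The second gap is that you correctly identify the ``main obstacle'' — showing that the limit solves the weak equation — but do not supply the mechanism, and this is where all the substance of the proof lives. Concretely, the paper needs: (a) the tilt estimate $\|w_j\|_{L^1}\le C$ obtained by testing with $e^{\lambda u}(u-\varphi)_\pm$; (b) the second fundamental form estimate, which requires rewriting the Jacobi equation in divergence form for a field $Y$ and a refined Kato inequality $|\second|^2\ge \frac{\alpha^2}{w^2}(\frac{m}{m-1}-\epsilon)|\nabla|\nabla u||^2+\dots$ to absorb the $\nabla w/w$ terms; (c) the higher integrability $w\ln(1+w)\in L^1_\loc$, which in dimension $m=2$ follows from Maz'ya's trace inequality combined with the ball-growth estimate $\int_{B_s}w\le Cs(\dots)$ — not from ``Sobolev embeddings and the shape of the Bochner identity''; and (d) the removable singularity theorem, which uses the same ball-growth estimate and $\mathscr H^1(E)=0$ to upgrade local uniform integrability of $\{w_j\}$ across $E$. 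Your treatment of (i) via an unspecified ``blow-up analysis'' also misses the actual argument, which is a trace inequality $\int_{\partial Q}\ln w_j\,d\mathscr H^1\le C$ (uniform in $j$, using the $L^1$ bound on $|D\ln w_j|$ from the second fundamental form estimate) played against the divergence $\ln w_j\to\infty$ $\mathscr H^1$-a.e.\ on a putative light segment. Steps (iii) and (iv) of your sketch are in the right spirit, with (iv) resting, as in the paper, on Bartnik's interior regularity theorem once (i) is known.
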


\begin{remark}
    For the application of Theorem \ref{main result} to Born-Infeld's electrostatics, please see Theorem \ref{te_forBI}.
\end{remark}
As an example, Theorem \ref{main result} applies to the Dirichlet problem \eqref{BI} for $\Omega \subseteq \R^2$ and $\rho \in \cM(\overline\Omega)$, which represents hypersurfaces of mean curvature $\rho$ in Minkowski's space $\LL^3$. In this case, $\sigma_u$ is the Euclidean metric for each $u$,  $\rho$ is a constant map and $X \equiv 0$. Setting the problem in $\Omega$ instead of $\Sigma$ as in Remark \ref{rem_Omega}, our assumptions on $\rho$ can be rewritten as follows: there exists a compact subset $E \subseteq \Omega$ such that 
\[
\rho = \rho_{\rm S} + \frac{d\rho}{d V_\delta} d  V_\delta, \qquad \text{with} \quad \left\{ \begin{array}{l}
\supp \rho_{\rm S} \subseteq E, \\[0.2cm]
\frac{d\rho}{d  V_\delta} \in L^1(\Omega) \cap L^2_\loc(\Omega \backslash E).
\end{array}\right. 
\]
and $\mathscr{H}^1(E) = 0$. This case was considered in \cite[Theorem 1.11]{BIMM}. Note also that the choices $E = \{x_1,\ldots x_k\}$ and $\rho$ a sum of Dirac deltas as in \eqref{eq_diracdeltas} enable to recover \cite[Theorem 2]{KM95}.

\begin{remark}
In \cite[Theorem 1.11]{BIMM}, conclusion \textit{(ii)} is a bit stronger than in \eqref{eq_higherinteg}, as both the  integrability relations in \eqref{eq_higherinteg} hold with an extra factor $\ln^q w_u$ for any $q \ge 0$. On the other hand, \textit{(iii)} here is new and, as stated, the mean curvature prescription in \eqref{eq_BI} allows for a larger class of sources even in $\LL^3$. We point out that, in \cite{BIMM}, $\Sigma$ is not assumed to have a regular spacelike boundary. Up to weakening the first conclusion in \textit{(ii)} to $w_u \in L^1_\loc(\Sigma)$, it is likely that the spacelike and smoothness assumptions on $\partial \Sigma$ in Theorem \ref{main result} could be removed as well, but for the sake of simplicity we do not address the issue here.   
\end{remark}

The strategy for proving Theorem \ref{main result}, summarized in Section \ref{strategy}, is inspired by \cite{BIMM}. However, significant changes have to be made to overcome the problems arising in our setting. 
For instance, some of the results in \cite{BIMM} are strongly based on variational arguments and on the convexity of $I_\rho$ in \eqref{eq_energy_BI}, which in particular implies the uniqueness of solutions. In contrast, \eqref{eq_BI} is, in general, non-variational and uniqueness fails. This affects, among others, the proofs of the tilt estimate $w_u \in L^1(\Sigma)$ and of the absence of light segments, which need new ideas. More subtle is also the proof of the second in \eqref{eq_higherinteg}, which can be rephrased as a bound on the second fundamental form of the graph of $u$. The argument relies on a careful rearrangement of the Jacobi equation, see Subsection \ref{sec_secondfund}.

Theorem \ref{main result} fails for spacetimes of dimension $m+1 \ge 5$, even for $\LL^{m+1}$. A counterexample was found in \cite[Corollary 1.10]{BIMM}: there exists a domain $\Omega \Subset \R^m$ and $u \in C_c^\infty(\Omega)$ spacelike outside of a segment $\overline{xy}$ and satisfying:
\begin{itemize}
    \item $u$ minimizes $I_\rho$ in \eqref{eq_energy_BI}  with boundary value $\varphi \equiv 0$ and 
\[
\rho = \delta_x - \delta_y + \rho_{\rm AC}, \qquad \rho_{\rm AC} \in L^q(\Omega) \ \  \forall q < m-1,
\]
(hence, $u$ is the only candidate to solve \eqref{BI} by the convexity of $I_\rho$), 
    \item $u$ does not solve \eqref{BI}.
\end{itemize}
Here, $E = \{x,y\}$, $X \equiv 0$ and (the constant map) $\rho$ satisfies all the conditions of Theorem \ref{main result}, but \eqref{eq_BI} admits no solution. Also, by construction the graph of $u$ has a light segment over $\overline{xy}$, so \textit{(i)} in Theorem \ref{main result} fails. Note that $\rho_{\rm AC} \in L^2(\Omega)$ if $m \ge 4$, and barely fails to be $L^2$ if $m=3$, leading us to formulate the following
\begin{question}
    Is Theorem \ref{main result} valid in ambient dimension $3+1$?
\end{question}
A positive answer to the question depends on solving the following two problems:

\begin{itemize}
    \item[(A)] prove that (the graph of)  $u$ has no light segments;
    \item[(B)] even assuming that $u$ has no light segments in a domain $\Sigma' \Subset \mathring\Sigma \backslash E$, prove the higher integrability $w_u \ln w_u \in L^1(\Sigma')$. This latter serves to guarantee that $u$ weakly solves \eqref{eq_BI} for test functions supported in $\Sigma'$.  
\end{itemize}

The validity of (A) is unknown even in $\LL^4$, and likely needs a refined analysis of the behaviour of $u$ and its level sets near a light segment. To our knowledge, no such result is available in the existing literature. On the other hand, (B) holds for \eqref{BI} in $\Omega \Subset \R^m$ by \cite[Theorem 1.14]{BIMM}. However, the estimates in \cite{BIMM} are based on the second order properties of the Lorentzian distance function
\[
\ell \ : \ \M \times \M \to \R,
\]
in particular on the fact that the $(1,1)$-Hessian of $\ell^2$ in Minkowski space is a multiple of the identity. As observed in \cite[Remarks at p.158]{bartnik88}, bounding $\bnabla^2 \ell$ in a way that is effective for our purposes is not easy in general globally hyperbolic spacetimes, and we are not aware of results in this direction. In this respect, the comparison theory developed in \cite{erke_garcia_kupe,alias_hurtado_palmer,impera} seems difficult to apply.

\vspace{0.5cm}

\noindent \textbf{Acknowledgements.} The authors thank Alessandro Iacopetti for several interesting discussions. The second author is partially supported by the PRIN project no. 20225J97H5 (Italy) ``Differential-geometric aspects of manifolds via Global Analysis''.





\section{Preliminaries}\label{sec_prelim}

\subsection*{Globally hyperbolic spacetimes} Let $(\M,\g)$ be an $(m+1)$-dimensional spacetime, that is, a Lorentzian time-oriented manifold, and denote by $\bnabla$ its metric connection. The following characterization result for globally hyperbolic spacetimes was proved in \cite{Bernal:2005aa}.

\begin{lemma}
    For an $(m+1)$-dimensional spacetime $(\M,\g)$ the following are equivalent.
    \begin{enumerate}
        \item $(\M,\g)$ is globally hyperbolic (see \cite[Chapter 14]{oneill})
        \item\label{space-time_splitting}
        There exists a smooth submersion $\tau:\M\to \R$, called \emph{splitting time function} such that the following hold.
        \begin{enumerate}[label = (\roman*)]
            \item The gradient $\bnabla\tau$ is time-like and past-pointing everywhere.
            \item Each level set $S_t \doteq \{\tau = t\}$ is a smooth, spacelike, Cauchy hypersurface diffeomorphic to an $m$-dimensional manifold $S$.
            \item The flow of $\bnabla\tau$ induces a smooth submersion $\pi:\M\to S$ and the product map
            \begin{align*}
                \M \to \R\times S \qquad p \mapsto (\tau(p),\pi(p))
            \end{align*}
            is a diffeomorphism. In this case we write $\M \taueq \R\times S$.
            \item The metric writes as
            \begin{align*}
                \g = \balpha^2 (-d\tau^2 + \bar\sigma)
            \end{align*}
            where $\balpha = (-\g(\bnabla\tau,\bnabla\tau))^{-1/2}$ is called the \emph{lapse function} and $\bar\sigma$ is a $2$-covariant symmetric tensor whose kernel is generated by $\bnabla\tau$ and that restricts to a Riemannian metric on every $S_t$. 
        \end{enumerate}
    \end{enumerate}
\end{lemma}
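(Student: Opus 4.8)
The plan is to establish the two implications separately, the converse $(1)\Rightarrow(\ref{space-time_splitting})$ being the substantial one. The implication $(\ref{space-time_splitting})\Rightarrow(1)$ is immediate from the classical theory: item (ii) of $(\ref{space-time_splitting})$ asserts in particular that $S_0=\{\tau=0\}$ is a smooth spacelike Cauchy hypersurface, and any spacetime possessing a Cauchy hypersurface is globally hyperbolic by Geroch's theorem. So the real work lies entirely in the converse.

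For $(1)\Rightarrow(\ref{space-time_splitting})$ I would follow the two–step scheme of Bernal and S\'anchez \cite{Bernal:2005aa}. First, Geroch's argument produces from global hyperbolicity a \emph{continuous} Cauchy time function $t_0:\M\to\R$ — for instance $t_0=\log\big(\mu(J^-(\cdot))/\mu(J^+(\cdot))\big)$ for a suitable finite measure $\mu$ on $\M$ — which is strictly increasing along future-directed causal curves and whose level sets are topological Cauchy hypersurfaces. Second, and this is the crux, one smooths $t_0$ into a function $\tau$ that is still a time function, is moreover \emph{temporal} (its gradient $\bnabla\tau$ is past-pointing timelike \emph{everywhere}), and whose level sets remain Cauchy. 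I expect this smoothing to be the main obstacle: a direct mollification of $t_0$ fails, since it destroys monotonicity along causal curves that skirt the light cones, and keeping $\bnabla\tau$ uniformly timelike — including in the directions tangent to the level sets — requires the careful argument of \cite{Bernal:2005aa}, which patches local smooth time functions via a partition of unity adapted to the causal cones and then ``temporalizes'' the result using the slices of $t_0$ as a scaffold.

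Granted such a Cauchy temporal function $\tau$, the remaining assertions of $(\ref{space-time_splitting})$ follow by elementary differential topology. Put $Y\doteq\bnabla\tau/\g(\bnabla\tau,\bnabla\tau)$, so that $Y(\tau)\equiv1$ and $\tau$ increases at unit rate along the flow of $Y$. Maximal integral curves of $Y$ are inextendible timelike curves; since each $S_t$ is Cauchy, every such curve meets every $S_t$, so $\tau$ is unbounded above and below along it and the flow of $Y$ is complete. The time-$(t-s)$ map of this flow is then a diffeomorphism $S_s\to S_t$; fixing $S\doteq S_0$ and letting $\pi:\M\to S$ send a point to the foot on $S$ of its $Y$-flow line, the assignment $p\mapsto(\tau(p),\pi(p))$ is a diffeomorphism $\M\to\R\times S$ under which every $S_t$ corresponds to $\{t\}\times S$; this also shows $\pi$ is a submersion, establishing (i)–(iii).

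Finally, for the metric form (iv): in the product coordinates $(\tau,x)$ the coordinate field $\partial_\tau$ equals $Y$, hence is parallel to $\bnabla\tau$ and thus $\g$-orthogonal to every slice $S_t$. Therefore $\g$ has no mixed $\di\tau\,\di x$ terms, $\g=-\Lambda^2\,\di\tau^2+h_\tau$ with $h_\tau$ a smooth $\tau$-family of Riemannian metrics on $S$, and, since $\partial_\tau\parallel\bnabla\tau$, a one-line computation gives $-\Lambda^2=\g(\partial_\tau,\partial_\tau)=1/\g(\bnabla\tau,\bnabla\tau)$, i.e.\ $\Lambda^2=-1/\g(\bnabla\tau,\bnabla\tau)=\balpha^2$ with $\balpha=(-\g(\bnabla\tau,\bnabla\tau))^{-1/2}$. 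Setting $\bar\sigma\doteq\balpha^{-2}h_\tau$ on each slice and extending it to $\M$ by $\bar\sigma(\bnabla\tau,\cdot)\equiv0$ yields $\g=\balpha^2(-\di\tau^2+\bar\sigma)$, with $\ker\bar\sigma=\mathrm{span}\,\bnabla\tau$ and $\bar\sigma|_{S_t}$ Riemannian, as claimed.
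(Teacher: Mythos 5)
Your outline is correct and matches how the paper handles this lemma: the paper does not prove it at all but simply cites Bernal--S\'anchez \cite{Bernal:2005aa}, and your sketch correctly isolates the genuinely hard step (smoothing Geroch's continuous time function into a Cauchy \emph{temporal} function) and defers exactly that step to the same reference, while the remaining assertions (completeness of the flow of $\bnabla\tau/\g(\bnabla\tau,\bnabla\tau)$, the product diffeomorphism, and the conformal form $\g=\balpha^2(-d\tau^2+\bar\sigma)$) are worked out correctly and are indeed elementary. Nothing further is needed.
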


When a splitting time function $\tau$ is chosen, we will always denote by
\begin{align}\label{eq_observer}
    T = -\balpha\bnabla\tau
\end{align}
the \emph{future pointing} normalization of $\bnabla\tau$. Physically, $T$ is  an observer and the level sets $S_t$ are to interpret as the rest spaces of $T$.

\subsection*{Spacelike hypersurfaces} 
Let $\Sigma\subset\M$ be a compact spacelike hypersurface with precompact Cauchy development $D(\Sigma)$ and set
    \begin{align*}
        \mathscr{Y}(\Sigma) \doteq \{M\subset \M \ | \ \text{$M$ weakly spacelike, $D(M) = D(\Sigma)$} \}.
    \end{align*}

\begin{lemma}\label{lem_goodproj}
    For each splitting time function $\tau$ and associate projection $\pi:\M\to S$, it holds
    \begin{align*}
        D(\Sigma) = D(M) \quad \Longrightarrow \quad \pi(\Sigma) = \pi(M). 
    \end{align*}
   Moreover, the restriction $\pi : M \to \pi(M)$ is a homeomorphism, and a $C^1$ diffeomorphism if $M$ is spacelike.
\end{lemma}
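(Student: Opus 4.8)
\textbf{Proof strategy for Lemma \ref{lem_goodproj}.}

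The plan is to exploit the global hyperbolicity of $\M$ together with the fact that $M$ and $\Sigma$ share the same Cauchy development, reducing everything to statements about inextendible timelike curves. Write $\M \taueq \R \times S$ for the chosen splitting, so that the integral curves of $\bnabla \tau$ are exactly the fibers $\{(\cdot, p) : p \in S\}$ of $\pi$, and each such fiber is an inextendible timelike curve (its velocity is proportional to $\bnabla\tau$, which is timelike everywhere).

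First I would prove the inclusion $\pi(M) \subseteq \pi(\Sigma)$, and then by symmetry obtain equality. Fix $q \in M$; I claim $\pi^{-1}(\pi(q)) \cap \Sigma \neq \emptyset$. Consider the fiber $\gamma = \pi^{-1}(\pi(q))$, an inextendible timelike curve through $q$. Since $M$ is achronal and closed, $\gamma$ meets $M$ exactly once, namely at $q$. Split $\gamma$ into its future half $\gamma^+$ and past half $\gamma^-$ emanating from $q$. Each of $\gamma^+,\gamma^-$ is a past/future inextendible causal curve; since it starts on $M \subseteq D(M) = D(\Sigma)$, one checks using the definition of Cauchy development that $\gamma$ as a whole is contained in $D(\Sigma)$ — more precisely, every point sufficiently far along $\gamma^+$ lies in $D^+(\Sigma)$ or $D^-(\Sigma)$. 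Now invoke the standard fact (see \cite[Chapter 14]{oneill}) that a Cauchy hypersurface is met exactly once by every inextendible timelike curve, and that $\Sigma$ is a Cauchy hypersurface \emph{for} its Cauchy development $\mathrm{int}\, D(\Sigma)$: since $\gamma$ enters $D(\Sigma)$, it must cross $\Sigma$, hence $\pi(q) \in \pi(\Sigma)$. The same argument with the roles of $M$ and $\Sigma$ exchanged (using that $\Sigma$ is itself weakly spacelike, being smooth spacelike) gives $\pi(\Sigma) \subseteq \pi(M)$, hence $\pi(\Sigma) = \pi(M)$.

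Next, the restriction $\pi|_M : M \to \pi(M)$ is a bijection: it is injective because $M$ is achronal — two points of $M$ on the same fiber of $\pi$ would be connected by a timelike arc of that fiber, contradicting achronality — and surjective onto $\pi(M)$ by definition. Since $M$ is closed in $\M$, $\pi(M) = \pi(\Sigma) = \overline{\Omega}$ is compact, and a continuous bijection from a closed subset of $\M$ onto a compact Hausdorff space need not a priori be a homeomorphism; instead I would argue that $\pi|_M$ is proper (preimages of compact sets are closed subsets of the closed set $M$, hence $M$ itself is such a preimage, and $M$ compact would follow — alternatively argue locally), or more directly exhibit a continuous inverse: the inverse sends $p \in \pi(M)$ to the unique intersection of the fiber over $p$ with $M$, which is exactly the graph map $F_u$, and continuity of $p \mapsto u(p)$ follows from the closedness of $M$ together with a standard diagonal/compactness argument (if $p_n \to p$ but $u(p_n) \to t \neq u(p)$, then $(t,p) \in M$ since $M$ is closed, contradicting injectivity on the fiber). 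Finally, when $M$ is spacelike it is a $C^1$ graph $F_u$ with $|du|_{\sigma_u} < 1$, so $\pi|_M = (F_u)^{-1}$ is a $C^1$ diffeomorphism onto $\overline\Omega$ with $C^1$ inverse $F_u$.

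The main obstacle is the first part: carefully justifying that the full fiber $\gamma$ through a point of $M$ lies in (the interior of) $D(\Sigma)$ and therefore must cross the Cauchy hypersurface $\Sigma$. This requires being precise about the definitions of $D^\pm$ for merely achronal (not necessarily spacelike) sets, about the behaviour of causal curves near the edge of an achronal set, and about the sense in which $\Sigma$ is a Cauchy hypersurface for its own development. I expect this to be handled by citing the relevant structural results on Cauchy developments from \cite{oneill} and \cite{he}, after reducing to the clean statement ``$\gamma$ is an inextendible timelike curve entering the open globally hyperbolic region $\mathrm{int}\,D(\Sigma)$, hence meets the Cauchy hypersurface $\Sigma$''. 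The remaining topological claims (injectivity from achronality, continuity of the inverse from closedness) are routine once this is in place.
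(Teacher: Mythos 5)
Your argument is essentially the paper's: the key point in both is that the fibers of $\pi$ are inextendible timelike curves, so the definition of the Cauchy development forces them to meet both $M$ and $\Sigma$. The paper phrases it contrapositively (a fiber through $p\in\Sigma$ missing $M$ would give $p\in D(\Sigma)\setminus D(M)$), while you argue directly that the fiber through $q\in M\subseteq D(M)=D(\Sigma)$ must cross $\Sigma$; these are the same observation. Two small remarks. First, you do not need the structure theory of Cauchy hypersurfaces for $\mathrm{int}\,D(\Sigma)$: since $q\in D^{+}(\Sigma)\cup D^{-}(\Sigma)$, the past (or future) half of the fiber is an inextendible causal curve from $q$ and meets $\Sigma$ by definition — the delicate claim that the whole fiber lies in $D(\Sigma)$ is unnecessary. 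Second, the homeomorphism is immediate: $M$ is closed in $\M$ and contained in $\overline{D(\Sigma)}$, which is compact by Hypothesis \ref{cauchy compatto}, so $M$ is compact and a continuous bijection from a compact space to a Hausdorff space is a homeomorphism; your hand-built continuous inverse works but is not needed, and your properness aside is circular as written. Finally, for the spacelike case your phrasing ``$M$ is a $C^1$ graph $F_u$, so $\pi|_M=(F_u)^{-1}$'' slightly begs the question, since the graph representation is what is being established; the clean route (the paper's) is to note that $d(\pi|_M)$ is injective because $T_pM$ is spacelike and hence transverse to the timelike fibers, and then apply the inverse function theorem.
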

\begin{proof}
    Assume by contradiction the existence of $p\in \Sigma$ such that $\pi(p)\in\pi(\Sigma)\backslash\pi(M)$. Then, the integral curve $\gamma$ of $T$ passing through $p$ is a timelike inextensible curve that does not meet $M$. But this would mean that $p\in D(\Sigma)\backslash D(M)$, a contradiction. Hence, $\pi(\Sigma)\subseteq\pi(M)$, and the reverse inclusion follows by switching the roles of $\Sigma$ and $M$. The map $\pi$ restricted to $M$ is continuous and bijective, hence a homeomorphism since $M$ is compact. The last statement follows by the inverse function theorem. 
\end{proof}

Assume a splitting time function $\tau$ has been chosen. For a given $M\in\mathscr{Y}(\Sigma)$ define the \emph{height function} of $M$ with respect to $\tau$ as the only function $u:\Sigma\to \R$ such that the graph map
    \begin{align*}
        F_u: \Sigma \to \R\times S\taueq\M \qquad F_u(x) = (u(x),\pi(x))
    \end{align*}
is an embedding with $F_u(\Sigma) = M$. Lemma \ref{lem_goodproj} implies that $u$ is well defined on the whole of $\Sigma$. We will always call $\varphi \in C^1(\Sigma)$ the height function of $\Sigma$. If we let $g_u\doteq F_u^*\g$, we have
\begin{align*}
    g_u = \alpha_u^2\left( \sigma_u - du^2\right)
\end{align*}
where $\alpha_u = F_u^*\balpha$ and $\sigma_u = F_u^*\bar\sigma$. Note that $M$ with its induced metric is identified via $F_u$ with $(\Sigma,g_u)$, that $\sigma_u$ is always a (Lipschitz continuous) Riemannian metric on $\Sigma$ and that
\begin{align*}
    \text{$M$ is spacelike} \quad &\iff \quad u\in C^1(\Sigma)  \quad \text{and}\quad \abs{du}_{\sigma_u} < 1 \\
    \text{$M$ is weakly spacelike} \quad &\iff \quad u\in \lip(\Sigma) \quad \text{and} \quad \abs{du}_{\sigma_u} \leq 1.
\end{align*}

\begin{lemma}\label{lem_stessafront}
    For each splitting time function $\tau$ there exists a one to one correspondence
    \begin{align*}
        \mathscr{Y}(\Sigma) \quad &\longleftrightarrow \quad \Y \doteq\{u\in\lip(\Sigma) \ | \ |du|_{\sigma_u}\leq 1 \text{ and } u = \varphi \text{ on $\partial\Sigma$}\}
    \end{align*}
    given by associating to every weakly spacelike hypersurface its height function with respect to $\tau$.
\end{lemma}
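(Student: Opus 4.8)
I would realise the correspondence as $M\mapsto u_M$, the height function of $M$ with respect to $\tau$, and check that $u\mapsto F_u(\Sigma)$ is a two-sided inverse of it.

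\emph{The assignment $M\mapsto u_M$ is well defined, injective, and lands in $\Y$.} Let $M\in\mathscr{Y}(\Sigma)$. By Lemma \ref{lem_goodproj} one has $\pi(M)=\pi(\Sigma)$ and $\pi|_M$ is a homeomorphism onto this common compact set, so $u_M\doteq\tau\circ(\pi|_M)^{-1}\circ\pi|_\Sigma$ is a well-defined continuous function on $\Sigma$ with $F_{u_M}(\Sigma)=M$; since $M$ is recovered from $u_M$ the assignment is injective. As $M$ is weakly spacelike, the equivalences recorded just before the statement give $u_M\in\lip(\Sigma)$ with $|du_M|_{\sigma_{u_M}}\le1$ a.e. Finally $u_M=\varphi$ on $\partial\Sigma$: for an achronal locally Lipschitz hypersurface-with-boundary the causal edge coincides with the topological boundary, and the edge of an achronal set is intrinsic to its closed Cauchy development $\overline{D(\,\cdot\,)}$ --- it is the locus where the two Cauchy horizons cap off --- so $D(M)=D(\Sigma)$ forces $\partial M=\partial\Sigma$, i.e.\ $\{(u_M(x),\pi(x)):x\in\partial\Sigma\}=\{(\varphi(x),\pi(x)):x\in\partial\Sigma\}$, whence $u_M=\varphi$ on $\partial\Sigma$ and $u_M\in\Y$. (Alternatively, one can argue directly by producing, from a putative edge point where $u_M\neq\varphi$, an inextendible causal curve lying in $D(\Sigma)=D(M)$ that misses both $\Sigma$ and $M$; this is more delicate.)

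\emph{Surjectivity, modulo the Cauchy-development identity.} Fix $u\in\Y$ and put $M\doteq F_u(\Sigma)$. Since $\pi|_\Sigma$ is a diffeomorphism onto the compact set $\pi(\Sigma)$ and $u$ is Lipschitz, $F_u$ is a bi-Lipschitz topological embedding, so $M$ is connected, closed in $\M$, and an embedded locally Lipschitz hypersurface with boundary $F_u(\partial\Sigma)$, which equals $\partial\Sigma$ because $u=\varphi$ there. As $u\in\lip(\Sigma)$ with $|du|_{\sigma_u}\le1$ a.e., the same equivalences give that $M$ is weakly spacelike; in particular it is achronal. It remains only to prove $D(M)=D(\Sigma)$.

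\emph{$D(M)=D(\Sigma)$ --- the crux.} The Cauchy development is monotone and idempotent, $D(D(A))=D(A)$ for achronal $A$ (any inextendible causal curve through $p\in D(D(A))$ meets $D(A)$ and, continued beyond that intersection, then meets $A$), so it suffices to establish $M\subseteq D(\Sigma)$ and $\Sigma\subseteq D(M)$: these give $D(M)\subseteq D(D(\Sigma))=D(\Sigma)$ and $D(\Sigma)\subseteq D(D(M))=D(M)$. Both inclusions I would prove by one argument. Take $q\in M$ with $\tau(q)\ge\varphi(\pi(q))$ (the reverse inequality $\tau(q)<\varphi(\pi(q))$ being handled symmetrically with a future-directed curve) and a past-directed inextendible causal curve $\gamma$, with starting parameter $s_0$, issuing from $q$. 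Since each slice $\{\tau=t\}$ is a Cauchy surface, $\gamma$ meets all of them, so $\tau\circ\gamma$ is strictly decreasing with range $\R$; hence, as long as $\pi\circ\gamma$ stays in $\pi(\Sigma)$, the continuous function $h\doteq\tau\circ\gamma-\varphi\circ\pi\circ\gamma$ has $h(s_0)=\tau(q)-\varphi(\pi(q))\ge0$ and tends to $-\infty$, so it vanishes at some parameter where $\gamma$ lies over $\pi(\Sigma)$ --- i.e.\ $\gamma$ meets $\Sigma$. The only escape is that $\pi\circ\gamma$ leaves $\pi(\Sigma)$, at the first such parameter $s_*$ reaching $y_*\in\partial(\pi(\Sigma))$, with $h(s_*)>0$, i.e.\ $\gamma(s_*)=(\tau(\gamma(s_*)),y_*)$ and $\tau(\gamma(s_*))>\varphi(y_*)$; but then the vertical (timelike) segment gives $\gamma(s_*)\in I^+\bigl((\varphi(y_*),y_*)\bigr)$, while $\gamma|_{[s_0,s_*]}$ reversed is a future causal curve from $\gamma(s_*)$ to $q$, so $q\in J^+(\gamma(s_*))\subseteq J^+\bigl(I^+((\varphi(y_*),y_*))\bigr)=I^+\bigl((\varphi(y_*),y_*)\bigr)$; since $(\varphi(y_*),y_*)\in\partial\Sigma=\partial M\subseteq M$ and $q\in M$, this contradicts the achronality of $M$. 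For $q\in\Sigma$ one uses the achronality of $\Sigma$; for the inclusion $\Sigma\subseteq D(M)$ one runs the identical argument starting from $q\in\Sigma$ and comparing against $u$ in place of $\varphi$, the escape again being excluded by achronality of the surface one starts from. I expect this ``no escape through the edge'' step, and the causal-geometry input behind it ($J^+\circ I^+=I^+$, and $\tau$ exhausting $\R$ along inextendible causal curves), to be the technical heart of the lemma; Hypothesis \ref{cauchy compatto} is used to keep $M$ together with the relevant curve segments inside a fixed compact set, so that all metric quantities there are uniformly controlled.
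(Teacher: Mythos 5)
Your proof is correct, and in one respect it is more complete than the paper's: the paper verifies that the graph of a $u\in\Y$ is achronal but never explicitly checks $D(F_u(\Sigma))=D(\Sigma)$, whereas your ``crux'' paragraph supplies exactly this, via the idempotence $D(D(A))=D(A)$ and the ``no escape through the edge'' argument (which correctly uses that $\tau$ exhausts $\R$ along inextendible causal curves because the slices are Cauchy, plus the push-up property $J^+\circ I^+=I^+$ and achronality of the surface one starts from). Where you diverge from the paper is the step $\partial M=\partial\Sigma$. The paper proves it by an explicit construction: given $p\in\partial\Sigma$, $q\in\partial M$ with $\pi(p)=\pi(q)$ and $\tau(q)>\tau(p)$, it builds an inextendible piecewise-$C^1$ timelike curve that coincides with the integral line of $\partial_\tau$ outside $[2\tau(p)-\tau(q),\tau(q)]$ and bulges slightly outside $\pi(\Sigma)$ along the outer normal of $\partial\pi(\Sigma)$ on that interval, so that it hits $q$ but misses $p$, contradicting $D(M)=D(\Sigma)$. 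You instead invoke the causality-theoretic fact that the edge of a closed achronal set is determined by its Cauchy development (via $H^\pm(A)=\overline{D^\pm(A)}\setminus I^\mp(D^\pm(A))$ and $\mathrm{edge}=H^+\cap H^-$ for hypersurfaces-with-boundary); this is true but is the one assertion in your argument taken on faith, and you rightly flag the direct construction as the alternative. Finally, your closing remark about Hypothesis \ref{cauchy compatto} controlling ``metric quantities'' is not really what is needed here: neither your argument nor the paper's uses \ref{cauchy compatto} in this lemma beyond the standing compactness of $\Sigma$ and global hyperbolicity.
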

\begin{proof}
    Assume that $M\in\mathscr{Y}(\Sigma)$ and let $u$ be its height function. Then by \cite[Proposition 6.3.1]{he} $u$ is Lipschitz and since $M$ is achronal it must be $|du|_{\sigma_u}\leq 1$. Otherwise, there would be a tangent vector $V\in T\Sigma$ such that $g_u(V,V)>0$. Then, by continuity, any curve $\gamma:(-\epsilon,\epsilon)\to\Sigma$ with $\gamma'(0) = V$ would be timelike for small enough $\epsilon>0$, which is impossible since $\Sigma$ is achronal. On the other hand, if $u\in\lip(\Sigma
    )$ satisfies $|du|_{\sigma_u} \le 1$, then for any absolutely continuous curve $\gamma$ on $\Sigma$ we have $g_u(\gamma',\gamma')\leq 0$, that is, $\gamma$ is not timelike and hence $M$ is achronal.

    We only now need to show that $\partial\Sigma = \partial M$ for each $M\in\mathscr{Y}(\Sigma)$. Since $\pi:\Sigma\to \pi(\Sigma)$ is a diffeomorphism, $\pi(\Sigma)$ has $C^1$ boundary, and we know by Lemma \ref{lem_goodproj} that $\pi(\Sigma) = \pi(M)$ and that $\pi : M \to \pi(\Sigma)$ is a homemorphism. In particular, both $\partial M$ and $\partial \Sigma$ project onto $\partial \pi(\Sigma)$. Suppose by contradiction that $\partial \Sigma \neq \partial M$, so that there exist $p \in \partial \Sigma$, $q \in \partial M$ with $p \neq q$ and $\pi(p) = \pi(q) = x$. Without loss of generality, we can assume that $\tau(q) > \tau(p)$, and set $\delta = \tau(q) - \tau(p)$. Consider the outer unit normal $n$ to $\partial\pi(\Sigma)$ in $S$ and the curve $\gamma: [  2 \tau(p) - \tau(q), \tau(q)]\to \R \times S$ given by 
    \[ 
    \gamma(\tau)= \left\{ \begin{array}{ll}
    (\tau,x) & \quad \text{for } \, \tau < 2 \tau(p)-\tau(q) \  \text{or} \, \tau > \tau(q), \\[0.3cm] 
    \disp \left(\tau,  \exp^S_x\left(\epsilon\sin\left( \frac{\pi(\tau - \tau(p) +\delta)} {2\delta}\right)n\right) \right) & \quad \text{if } \, \tau \in [ 2 \tau(p)- \tau(q), \tau(q)]
    \end{array}\right.
    \]
    with $\epsilon>0$ small enough to make $\gamma$ timelike. Then, $\gamma$ is an inextendible piecewise $C^1$ timelike curve that meets $q$ at time $\tau(q)$ but does not meet $p$, hence $q \in D(M)\backslash D(\Sigma)$, a contradiction.
\end{proof}

\begin{notation}
    When $\tau,\Sigma$ have been specified, if there is no risk of confusion we will drop the subscript $u$ and simply write $F = F_u$, $g = g_u$, and so on. We will use the symbols $\bnabla$ and $\nabla$ to denote the Levi-Civita connections of $\g$ and  $g_u$ respectively, while $D$ will stand for that of $\sigma_u$.
    Tensor norms will be denoted as follows:
    \begin{align*}
        \abs{A}_{\g}^2\doteq\g(A,A) && \abs{A}_{g_u}\doteq\sqrt{g_u(A,A)} && \abs{A}_{\sigma_u}\doteq\sqrt{\sigma_u(A,A)}
    \end{align*}
    but when the norm and the covariant derivatives are computed with respect to the same metric the subscript will be omitted, for instance
    \begin{align*}
        |Du|=|Du|_{\sigma_u} = |du|_{\sigma_u}, && |\nabla u| = |\nabla u|_{g_u} = |du|_{g_u}.
    \end{align*}
\end{notation}

The volume densities of the metrics $g$ and $\sigma$ write in terms of the tilt function 
\[
w \doteq - \g(T,N)
\]
as follows:
\begin{align}\label{volumi}
    dV_g = w^{-1}\alpha^m \, dV_\sigma.
\end{align}
By a direct computation we see that 
\begin{equation}\label{eq_nu_extri}
    w = \frac{1}{\sqrt{1-\abs{Du}^2}} \qquad N= w\left(U+T\right), \qquad U\doteq\balpha\bnabla\bu, \qquad \bu \doteq \pi^* u
\end{equation}
and that, in a local frame $\{\partial_i\}$ on $\Sigma$, the metric $g = g_u$ and its inverse write as
\begin{align}
    g_{ij} = \alpha^2(\sigma_{ij}-u_iu_j), \qquad g^{ij}=\alpha^{-2}(\sigma^{ij}+w^2u^iu^j)
\end{align}
where $\alpha = \alpha_u$, $\sigma = \sigma_u$, $u_i = \partial_i u$, and $u^i \doteq \sigma^{ij}u_j$. Note also that by \eqref{eq_observer} the tangential component of $T$ is
    \begin{align}\label{tangente di T}
    T^{\top} = -\alpha\nabla u,
\end{align}
hence, from $-1 = \abs{T}_{\g}^2 = \abs{T^{\top}}^2 - \g(T,N)^2$ we have
\begin{align}\label{eq_w_nabla_u}
    w^2 = \alpha^2\abs{\nabla u}^2 + 1.
\end{align}

\subsection*{Mean curvature} Fix a spacelike $M\in\mathscr{Y}(\Sigma)$ with height function $u$, graph map $F = F_u: \Sigma\to \M$ and future unit normal $N$. The \emph{second fundamental form} $\second$ of $F$ in direction $N$ is defined by the identity
\begin{align*}
    \bnabla_{F_*X}F_*Y=F_*(\nabla_XY)+\second(X,Y)N&& \forall X,Y\in\X(\Sigma),
\end{align*}
that is, 
\begin{align*}
    \second(X,Y)= -\g(\bnabla_{F_*X}F_*Y, N) = \g( F_*Y, \bnabla_{F_*X}N).
\end{align*}
The \emph{mean curvature} of $F$ in direction $N$ is $H\doteq \tr_g\second$. By an elementary computation
we see that
\begin{align}\label{seconda}
        w\second = \alpha\nabla^2u + du\odot d\alpha + \frac{1}{2}F^*\lie_T\g
    \end{align}
where $du\odot d\alpha=\frac{1}{2}(du\otimes d\alpha+d\alpha\otimes du)$ and $\lie_T\g$ is the Lie derivative of $\g$ in direction $T$.

For a vector field $X\in\X(\M)$ we define the tangential divergence as
\begin{align*}   
    \diver_M X \doteq \tr_{M}\bnabla X = \sum_{j=1}^m \g(\bnabla_{F_*e_j} X,F_*e_j) \in C^\infty(\Sigma)
\end{align*}
where $\{e_j\}$ is any local $g$-othonormal frame. Therefore,
\begin{align}
    \diver_M X = \diver_{\g}X + \g(\bnabla_NX,N),
\end{align}
and decomposing $X$ along $M$ into its tangential and normal components as
\begin{align*}
    X = F_*X^{\top} - \g(X,N)N
\end{align*}
one has
\begin{align}\label{eq_local_diverg_thm}
    \diver_MX = \diver_g X^{\top} - \g(X,N)H.
\end{align}
In particular $H = \diver_MN$ and the following integration by parts rule holds for any test function $\eta\in C_c^1(\mathring\Sigma)$:
\begin{align*}
    \int_\Sigma \eta\diver_MX \, dV_g = -\int_{\Sigma} g(\nabla\eta,X^{\top}) \, dV_g - \int_\Sigma \eta g(X,N)H \, dV_g.
\end{align*}
Setting $X=T$ and using 
\eqref{tangente di T} we deduce the following expression for the mean curvature:
\begin{align}\label{eq_mean_curvature}
    wH = \diver_g(\alpha\nabla u) + \diver_M T.
\end{align}
Therefore, for any $\eta\in C_c^1(\mathring\Sigma)$ it holds
\begin{align}\label{eq_smooth_weak_solution}
    \int_\Sigma \eta\diver_MT \, dV_g = \int_\Sigma g(\alpha\nabla u, \nabla\eta) \, dV_g + \int_\Sigma \eta wH \, dV_g.
\end{align}
Recalling \eqref{volumi} and the relation $g(\nabla u,\nabla\eta) = w^2\alpha^{-2}\sigma(Du,D\eta)$ this can be written as an integral identity with respect to the volume $dV_\sigma$ as follows:
\begin{align}\label{eq_smooth_weak_solution_sigma}
    \int_\Sigma \eta w^{-1}\alpha^m \diver_MT \, dV_\sigma = \int_\Sigma w\alpha^{m-1}\sigma(du,d\eta) \, dV_\sigma + \int_\Sigma \eta \alpha^m H \, dV_\sigma.
\end{align}
Here, we have made a slight abuse of notation by writing $\sigma(du,d\eta)$ instead of $\sigma^{-1}(du,d\eta)$.

The identities \eqref{eq_smooth_weak_solution} and \eqref{eq_smooth_weak_solution_sigma} allow us to define a weak solution for \eqref{eq_BI}. Suppose we are given a continuous vector field $X$ on $\overline{D(\Sigma)}$ and a continuous map $\rho: (\Y, \|\cdot \|_{C(\Sigma)}) \to\cM(\Sigma)$. 

\begin{definition}
We say that $u\in\Y$ is a \emph{weak solution to} 
\begin{equation}\tag{{\rm PMC}}\label{eq_BI_section}
    \begin{cases}
        H_u dV_{\sigma_u} = \rho(u) + \g(X, N_u)dV_{\sigma_u} & \text{in $\Sigma$} \\
        u = \varphi & \text{on  $\partial\Sigma$}
    \end{cases}
\end{equation}
if $w_u\in L^1_\loc(\mathring\Sigma)$ and for every $\eta\in C_c^1(\mathring\Sigma)$ it holds
\begin{align*}
    \int_\Sigma \eta \frac{\alpha^m}{w} \diver_MT \, dV_\sigma = \int_\Sigma w\alpha^{m-1}\sigma(du,d\eta) \, dV_\sigma +\int_\Sigma \eta\alpha^m \, d\rho + \int_\Sigma
        \eta\alpha^m\g(X,N) \, dV_\sigma. \nonumber
\end{align*}
where $\rho = \rho(u)$, $w = w_u$, $\sigma = \sigma_u$, $\alpha = \alpha_u$ and so on.
\end{definition}

\begin{remark}
    Observe that, since $u$ is Lipschitz, assumption $w_u\in L^1_\loc(\mathring\Sigma)$ implies that $N_u$ is well defined and timelike almost everywhere.
\end{remark}

\begin{remark}
    For later reference, we write  the expression of the mean curvature with an explicit dependence on the metric $\sigma$.
    
    Computing the relation between the Levi-Civita connections $\nabla$ of and $D$, one deduces the relation between the corresponding Hessians:
\begin{align}\label{eq_relation_hessians}
    \nabla^2u = w^2 D^2 u + w^2\sigma(Du,D\ln\alpha)(\sigma-du^2) - 2d \ln\alpha\odot du.
\end{align}
Taking traces with respect to $g$ one has
\begin{align}\label{eq_relations_laplacians}
    \Delta_g u = \frac{w^2}{\alpha^2}\left( \Delta_\sigma u + w^2 D^2u(Du,Du) + (m-2)\sigma(Du,D\ln\alpha) \right).
\end{align}
On the other hand, at a given $p\in M$ consider the slice $S= \{\tau = \tau(p)\}$. Using \eqref{eq_nu_extri} and $d\balpha = \balpha^3 \bnabla^2\tau(\bnabla\tau,\cdot)$, we have at $p$
\begin{align*}
    \diver_MT &= \diver_{\g}T + \g(\bnabla_NT,N) \\ &= \diver_ST + w^2\left(\g(\bnabla_UT,U) + \g(\bnabla_TT,U)\right) \\
    &= H^S + w^2 \left( \balpha^2\second^S(\bnabla\bu,\bnabla\bu) + \g(\bnabla\balpha,\bnabla\bu) \right)
\end{align*}
where $\second^S$ and $H^S$ are the second fundamental form and mean curvature of $S$. 
By \eqref{eq_mean_curvature} and \eqref{eq_relations_laplacians} we therefore deduce that in local charts
\begin{align}\label{eq_mean_curvature_sigma_ij}
H_u = \alpha_u w_u g_u^{ij} u_{ij} + w_u B^k(x,u,du) u_k + w_u^{-1}H^S(x,u),   
\end{align}
where $u_{ij} = \partial^2_{ij} u$, $u_k = \partial_k u$ and the functions $B^k(x,r,p)$ are smooth in $T^*\Sigma$, in particular, they are nonsingular where $|du|_{\sigma_u} = 1$. Thus, the Lorentzian mean curvature operator is a quasi-linear elliptic second order operator. However, it is not uniformly elliptic because the eigenvalues of the principal symbol are controlled by $w$, which explodes as $|Du|\to 1$.
\end{remark}

\section{Consequences of hypothesis \ref{cauchy compatto}, and approximation}

\stoptoc

\subsection{Some consequences of \ref{cauchy compatto}}

The choice of a splitting time function $\tau$ induces a Riemannian metric on $\M$ given by 
\begin{equation}\label{eq_eucl_metric}
\ee = \g + 2 T_\flat \otimes T_\flat, 
\end{equation}
which, as in \cite{bartnik88}, we use to measure $C^k$ norms of tensors on $\M$: for a tensor $A$ on $\overline{D(\Sigma)}$ we set
    \begin{align*}
        \norm{A} = \max_{\overline{D(\Sigma)}} \sqrt{\ee(A,A)}, && \norm{A}_k = \sum_{j=0}^n \norm{\bnabla^jA}.
    \end{align*}
Notice that if $X$ is any vector field we have
\begin{align}\label{eq_bound_euclidean_norm}
    |\g(X,N)| \leq \norm{N}\norm{X} = \sqrt{2w^2 - 1}\norm{X} \leq \sqrt{2}w\norm{X}.
\end{align}
The following Lemma establishes some simple but important consequences of Hypothesis \ref{cauchy compatto}, that will be repeatedly used throughout the paper. 

\begin{lemma}\label{conseguenze di C}
    Suppose $\Sigma$ satisfies Hypothesis \ref{cauchy compatto} and choose a spitting time function $\tau$. Then there exists a constant $C= C(\Sigma,\tau)$ such that, for each $u,v\in\Y$, the following \emph{a priori} estimates hold.
    \begin{enumerate}[label=(P\arabic*), ref=P\arabic*]
        \item\label{C_1} $\norm{u}_{L^\infty(\Sigma)}\leq C$ and $C^{-1}\leq\norm{\alpha_u}_{L^\infty(\Sigma)}\leq C$.
        \item\label{C_2} $\norm{\sigma_u-\sigma_v}_{W^{1,\infty}(\Sigma,\sigma_v)}\leq C$ and $C^{-1}\sigma_v \le \sigma_u \le C\sigma_v \ $  as quadratic forms.
        \item\label{C_3} For any $1\leq p\leq \infty$ the spaces $L^p(\Sigma,\sigma_u)$ and $W^{1,p}(\Sigma,\sigma_u)$ do not depend on $u \in \Y$ and the respective norms are all equivalent, namely, there exists $C_p = C(\Sigma,\tau,p)$ such that for each measurable function $f$ and $u,v \in \Y$
        \[
        \begin{array}{c}
            C_p^{-1}\norm{f}_{L^p(\Sigma,\sigma_v)} \leq \norm{f}_{L^p(\Sigma,\sigma_u)} \leq C_p \norm{f}_{L^p(\Sigma,\sigma_v)} \\[0.3cm]
            C_p^{-1}\norm{f}_{W^{1,p}(\Sigma,\sigma_v)} \leq \norm{f}_{W^{1,p}(\Sigma,\sigma_u)} \leq C_p \norm{f}_{W^{1,p}(\Sigma,\sigma_v)}.
        \end{array}
        \]
        \item\label{C_4} $\Y$ is uniformly bounded in $C(\Sigma)$ and uniformly $\sigma_v$-Lipschitz for any $v\in\Y$.
        \item\label{C_5} $\Y$ is compact in $C(\Sigma)$.
        \item\label{C_6} $\Y$ is weakly compact in $W^{1,p}(\Sigma,\sigma_\varphi)$ for any $1<p<\infty$.
        \item\label{C_7} For each measurable $f$, 
        \begin{align}\label{eq_W22_norms}
        C^{-1} \left(\norm{f}^2_{W^{2,2}(\Sigma,\sigma_v)} - 1\right) \leq \norm{f}^2_{W^{2,2}(\Sigma,\sigma_u)} \leq C \left(\norm{f}^2_{W^{2,2}(\Sigma,\sigma_u)} + 1 \right).
        \end{align}
        In particular, boundedness in $W^{2,2}(\Sigma,\sigma_u)$ and $W^{2,2}_\loc(\Sigma, \sigma_u)$ does not depend on $u \in \Y$. If $\g$ is conformal to the product metric $-d\tau^2 + \pi^*\sigma$ on $\R \times S$, the spaces $W^{2,2}(\Sigma, \sigma_u)$ all have equivalent norms.
        \item\label{C_8} The Lie derivative $\lie_T\g$ and the Ricci tensor $\Ric$ of $\g$ enjoy the following bounds on $D(\Sigma)$:
        \begin{align*}
        |\lie_T\g(V,V)|&\leq C\g(T,V)^2 \\
        |\bnabla\lie_T\g(V,V,V)|&\leq C\g(T,V)^3 \\
        \Ric(V,V)&\geq -C \g(T,V)^2
        \end{align*}
        for any timelike $V\in\X(\overline{D(\Sigma)})$.
        \item\label{C_9} If $X$ is a continuous vector field on $\overline{D(\Sigma)}$, then 
        \begin{align*}
            \forall V\in \X(\overline{D(\Sigma)}) \ \text{timelike,} \qquad \abs{\g(X,V)} \leq C\abs{\g(T,V)}.
        \end{align*}
        In particular, if $u\in\Y\cap C^\infty(\Sigma)$ and $V=N_u$ we have
        \begin{align*}
            \abs{\g(X,N_u)} \leq C w_u.
        \end{align*}
    \end{enumerate}
\end{lemma}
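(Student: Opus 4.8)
The plan is to extract everything from compactness. Under Hypothesis \ref{cauchy compatto} the closure $K\doteq\overline{D(\Sigma)}$ is compact, and every weakly spacelike $M$ with $D(M)=D(\Sigma)$ satisfies $M\subseteq D(M)=D(\Sigma)\subseteq K$; thus \emph{each} graph map $F_u$, $u\in\Y$, takes values in the one fixed compact set $K$, on which $\balpha,\bar\sigma,\g,T,\ee,\lie_T\g,\Ric$ and all their covariant derivatives are bounded. I would prove the estimates in the order (P1) $\to$ (P2) $\to$ (P4) $\to$ (P3) $\to$ (P5) $\to$ (P6) $\to$ (P7) $\to$ (P8),(P9), since there is a genuine dependence chain among the first four.

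For (P1): since $u=\tau\circ F_u$ with $F_u(\Sigma)\subseteq K$, boundedness of $\tau$ on $K$ bounds $\|u\|_{L^\infty}$, and $\alpha_u=F_u^*\balpha$ is pinched between positive constants by continuity and positivity of $\balpha$ on $K$. The structural fact I would record next is that, since $\pi$ is generated by the flow of $\bnabla\tau$, one has $\bnabla\tau\parallel\partial_\tau$, hence $\bar\sigma$ carries no $d\tau$-components in the splitting $\M\taueq\R\times S$; writing $\bar\sigma=\bar\sigma_{ij}(\tau,x)\,dx^i dx^j$ and $F_u=(u,\pi)$ gives the identity $\sigma_u=\bar\sigma_{ij}(u(x),x)\,dx^i dx^j$, on which (P2)--(P4) rest (incidentally $\ee=\balpha^2(d\tau^2+\bar\sigma)$). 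From this and $\|u\|_\infty\le C$ the coefficients $\bar\sigma_{ij}(u(x),x)$ range in a compact subset of the positive-definite cone, giving at once the two-sided comparison $C^{-1}\sigma_v\le\sigma_u\le C\sigma_v$ of (P2) for all $u,v\in\Y$; feeding this into $|du|_{\sigma_u}\le1$ yields the uniform Lipschitz bound (P4) (the $C(\Sigma)$-bound in (P4) being (P1)). With $|du|$ now controlled in $L^\infty$, differentiating $\bar\sigma_{ij}(u(x),x)$ in $x$ gives the $W^{1,\infty}$ part of (P2), after which (P3) is routine: uniformly comparable metrics with uniformly bounded Christoffel symbols and volume densities have uniformly comparable $L^p$ and $W^{1,p}$ norms.

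For (P5) I would use Arzel\`a--Ascoli: (P4) gives that $\Y$ is bounded and equi-Lipschitz, hence relatively compact in $C(\Sigma)$; the only delicate point is closedness. If $u_n\to u$ uniformly then $\sigma_{u_n}\to\sigma_u$ uniformly by the pullback formula, and $du_n\rightharpoonup du$ weakly in every $L^p$; weak lower semicontinuity of the convex functionals $v\mapsto\int_\Sigma\psi\,(\sigma_{u_n})^{ij}v_i v_j\,dV_{\sigma_\varphi}$ (with $0\le\psi\in C(\Sigma)$), together with the uniform convergence of the coefficients $(\sigma_{u_n})^{ij}$, passes the constraint $|du_n|_{\sigma_{u_n}}\le1$ to the limit, and with the boundary condition and Lemma \ref{lem_stessafront} one gets $u\in\Y$. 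For (P6): $\Y$ is bounded in the reflexive space $W^{1,p}(\Sigma,\sigma_\varphi)$ ($1<p<\infty$), hence weakly sequentially compact, and its weak limits coincide with the uniform limits from (P5), so they lie in $\Y$. For (P7) I would write $D^2_{\sigma_u}f=D^2_{\sigma_v}f+(\Gamma_{\sigma_v}-\Gamma_{\sigma_u})\cdot df$ with $\Gamma_{\sigma_v}-\Gamma_{\sigma_u}$ an $L^\infty$ tensor of norm $\le C$ by (P2), and combine with (P3) to get the stated two-sided comparison of $W^{2,2}$ norms; in the conformal case $\bar\sigma=\pi^*\sigma$ is $\tau$-independent, so the pullback formula makes all $\sigma_u$ coincide and the norms are equal.

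Finally (P8)--(P9) are pointwise inequalities on $K$. A short computation gives $\ee(T,\cdot)=-\g(T,\cdot)$, hence $\ee(T,T)=1$, and for timelike $V$, decomposing $V=-\g(T,V)\,T+V^\perp$ with $\g(T,V^\perp)=0$ one finds $\ee(V,V)=\g(T,V)^2+\g(V^\perp,V^\perp)<2\,\g(T,V)^2$ (the strict inequality because $\g(V,V)=-\g(T,V)^2+\g(V^\perp,V^\perp)<0$), so $\|V\|\le\sqrt2\,|\g(T,V)|$ for every timelike $V$. Then (P8) follows from $\|\lie_T\g\|,\|\bnabla\lie_T\g\|,\|\Ric\|\le C$ on $K$ via $|A(V,\dots,V)|\le\|A\|\,\|V\|^{\deg A}$, and (P9) from $\g(X,V)=\ee(X,V)-2\,\g(T,X)\,\g(T,V)$ together with $|\ee(X,V)|\le\|X\|\,\|V\|\le\sqrt2\,\|X\|\,|\g(T,V)|$ and $|\g(T,X)|=|\ee(T,X)|\le\|X\|$; taking $V=N_u$, with $\g(T,N_u)=-w_u$, gives $|\g(X,N_u)|\le Cw_u$. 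The only real obstacle here is bookkeeping: keeping the chain (P1) $\Rightarrow$ (P2)-ellipticity $\Rightarrow$ (P4) $\Rightarrow$ (P2)-$W^{1,\infty}$ in the right order, and handling the lower-semicontinuity step in (P5) where the metric $\sigma_{u_n}$ itself moves along the sequence.
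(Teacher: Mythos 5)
Your proposal is correct and follows essentially the same route as the paper: every estimate is extracted from the compactness of $\overline{D(\Sigma)}$ together with the coordinate identity $\sigma_u=\bar\sigma_{ij}(u(x),x)\,dx^i dx^j$ and its $x$-derivative, and your treatment of \eqref{C_8}--\eqref{C_9} via $\ee=\g+2T_\flat\otimes T_\flat$ reproduces the paper's bound \eqref{eq_bound_euclidean_norm}. The only difference is that you spell out the closedness of $\Y$ in $C(\Sigma)$ needed for \eqref{C_5} (passing $|du_n|_{\sigma_{u_n}}\le1$ to the limit by lower semicontinuity), a step the paper leaves implicit when invoking Ascoli--Arzel\`a.
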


\begin{notation}
    Hereafter we will denote any constant depending on $\Sigma$ and on the choice of a splitting time function $\tau$ with $C$. Any other dependency will be denoted by subscripts. If a constant does not depend on $\Sigma$, $\tau$ it will be denoted by $c$.
\end{notation}

\begin{proof}[Proof of Lemma \ref{conseguenze di C}]
    By the very definition of $\mathscr{Y}(\Sigma)$, every $M\in\mathscr{Y}(\Sigma)$ must lie within $\overline{D(\Sigma)}$, the compactness of which readily implies \eqref{C_1}.
    Choose a local chart $\{x^i\}$ on $\Sigma$ and define $\bar x^i \doteq \pi^* x^i$, $\bar x^0 \doteq \tau$. Then
    \begin{align}\label{eq_sigma_ij}
        \sigma_{ij} = F^*\bar\sigma_{ij} \qquad \partial_k\sigma_{ij} = F^*(\bpart_k\bar\sigma_{ij}+ \bu_k\bpart_0\bar\sigma_{ij})
    \end{align}
    readily yields
    \begin{align*}
        \norm{\sigma_v - \sigma_u}_{C(\Sigma)} \leq C, \qquad |u_k|\leq C \quad \forall k=1, \dots, m.
    \end{align*}
    and the second in \eqref{C_2}. Moreover, from
    \begin{align*}
        |\partial_k(\sigma^u_{ij}-\sigma^v_{ij})| &\leq |F^*_u(\bpart_k\bar\sigma_{ij}) - F^*_v(\bpart_k\bar\sigma_{ij})| + |F^*_u(\bu_k\bpart_0\bar\sigma_{ij}) - F^*_v(\bu_k\bpart_0\bar\sigma_{ij})|\leq C
    \end{align*}
    the first in \eqref{C_2} follows. Item  \eqref{C_3} is a direct consequence of \eqref{C_2}. Assertion \eqref{C_4} follows from \eqref{C_1} and the second in \eqref{C_2} applied to $D_v u$, where $u,v\in \Y$ and $D_v$ is the Levi-Civita connection of $\sigma_v$. Assertion \eqref{C_5} follows from the previous one by Ascoli-Arzelà.  Concerning \eqref{C_6}, since $\Y$ is bounded and closed in $W^{1,p}(\Sigma,\sigma_\varphi)$, which is reflexive for $1<p<\infty$, it is also weakly compact.
    To prove \eqref{C_7}, from  \eqref{eq_sigma_ij} it follows
    \begin{align*}
        (\Gamma^\sigma)^k_{ij} = F^*\left((\Gamma^{\bar\sigma})^k_{ij} + \frac{1}{2}\bar\sigma^{kl}\left(\bu_i \bpart_0\bar\sigma_{lj} + \bu_j\bpart_0\bar\sigma_{il} - \bu_l\bpart_0\bar\sigma_{ij}\right)\right).
    \end{align*}
    as a consequence, unless $\M$ is conformal to a product $\R\times S$, in which case $\bpart_0\bar\sigma_{ij} = 0$, the $W^{2,p}$ spaces are not equivalent. However, from the second in \eqref{eq_sigma_ij}, we deduce that for any measurable function $f$ and for $u,v\in\Y$
    \begin{align*}
        C^{-1} \left(|D^2_v f|_{\sigma_v}^2 - 1\right)\leq |D^2_u f|_{\sigma_u}^2 \leq C \left(|D^2_v f|_{\sigma_v}^2 + 1\right).
    \end{align*}
    Using \eqref{C_3}, the bound \eqref{eq_W22_norms} is thus satisfied.
    To prove \eqref{C_8}, by the compactness of $D(\Sigma)$ we have $\norm{\lie_T\g}_1\leq C$ and the same holds for $\Ric$. The inequalities in \eqref{C_8} readily follow from it. Finally, property \eqref{C_9} follows from \eqref{eq_bound_euclidean_norm}. 
\end{proof}

\begin{notation}
    For any $1<p<\infty$ and $0\le \beta<1$ we will write $L^p(\Sigma)$, $W^{1,p}(\Sigma)$, $W^{2,2}(\Sigma)$ and $C^{1,\beta}(\Sigma)$ leaving implicit the fact that we are considering the metric $\sigma_\varphi$: 
    \begin{align*}
        \norm{\cdot}_{W^{k,p}(\Sigma)} \doteq \norm{\cdot}_{W^{k,p}(\Sigma,\sigma_\varphi)} \qquad \norm{\cdot}_{C^{1,\beta}} \doteq \norm{\cdot}_{C^{1,\beta}(\Sigma,\sigma_\varphi)}.
    \end{align*}
\end{notation}

A useful remark is in order.

\begin{remark}
    Applying \eqref{C_8} to the unit normal of $M\in\mathscr{Y}(\Sigma)$ yields
    \begin{align}\label{eq_bounds_lie}
        |\lie_T\g(N,N)| + |\tr_M\lie_T\g| + |\diver_M T| \leq Cw^2 \qquad |\tr_M\bnabla_N\lie_T\g| \leq C w^3,
    \end{align}
    and
    \begin{align}\label{eq_bounds_ric}
        \Ric(N,N) \geq -C w^2.
    \end{align}
\end{remark}


\subsection{Approximating the mean curvature} For any $k\in\N$ and $\Sigma'\subseteq \Sigma$ define the set of Radon measure of class $C^k$ on $\Sigma'$ as
\begin{align*}
    \mathscr{C}^k(\Sigma') &\doteq \left\{ \mu \in \mathscr{M}(\Sigma) \quad : \quad \mu << dV_{\sigma_u} \quad \text{and} \quad \frac{d\mu}{dV_{\sigma_u}} \in C^k(\Sigma') \quad \forall u\in\Y\cap C^k(\Sigma') \right\}
\end{align*}
and the set of smooth measures as
\begin{align*}
    \mathscr{C}^\infty(\Sigma') \doteq \bigcap_{k\in\N} \mathscr{C}^k(\Sigma').
\end{align*}
Notice that $\tfrac{d\mu}{dV_{\sigma_u}}$ is $C^k(\Sigma')$ for one $u\in \Y\cap C^k(\Sigma')$ if and only it belongs to $C^k(\Sigma')$ for all $u\in\Y\cap C^k(\Sigma')$. We also define
\begin{align*}
    \mathscr{L}^p(\Sigma') &\doteq \left\{ \mu \in \mathscr{M}(\Sigma) \quad : \quad \mu << dV_{\sigma_u} \quad \text{and} \quad \frac{d\mu}{dV_{\sigma_u}} \in L^p(\Sigma') \quad \forall u\in\Y \right\}.
\end{align*}
For any $\delta>0$ and $\Sigma'\subseteq\Sigma$ set
\begin{align*}
    \Sigma'_\delta = \{ x\in\Sigma' \ : \ \di_{\sigma_\varphi}(x,\partial\Sigma') > \delta \}.
\end{align*}

We have the following approximation result, whose proof, depending on standard convolution procedures, is deferred to  Appendix \ref{Appe_approx}.

\begin{prop}\label{prop_appendix_sec}
    Assume \ref{cauchy compatto}. Let $\rho: (\Y, \|\cdot \|_{C(\Sigma)}) \to\cM(\Sigma)$ be a continuous map. There exists a sequence of functions
    \begin{align*}
        \rho_j \ : \ \Y \longrightarrow \mathscr{C}^\infty(\Sigma)
    \end{align*}
    such that the following hold for $j>>1$.
    \begin{enumerate}[label=(\roman*)]
        \item
        For any $\{u_j\}\subseteq\Y$ we have
        \begin{align*}
            u_j \to u \quad \text{in $C(\Sigma)$} \quad \Longrightarrow \quad \rho_j(u_j)\overset{*}{\rightharpoonup} \rho(u) \quad \text{in $\cM(\Sigma)$}.
        \end{align*}
        \item
        There exists a constant $C_\rho$ such that
        \begin{align*}
            \norm{\rho_j(u)}_{\cM(\Sigma)} \leq C_\rho \qquad \forall u\in \Y.
        \end{align*}
        \item
        For each $j$ there exists a constant $C_{\rho,j}$ such that
        \begin{align*}
            \norm{\frac{d\rho_j(u)}{dV_{\sigma_u}}}_{C^1(\Sigma)} \leq C_{\rho,j} \qquad \forall u\in\Y\cap C^1(\Sigma)
        \end{align*}
        and if $\rho$ is valued in $\mathscr{C}^1(\Sigma')$ for some $\Sigma'\subseteq\Sigma$, then  for every $\delta>0$ there exists a constant $C_{\Sigma',\delta}$ such that
        \begin{align*}
            \norm{\frac{d\rho_j(u)}{dV_{\sigma_u}}}_{C^1(\Sigma'_\delta)} \leq C_{\Sigma',\delta} \norm{\frac{d\rho(u)}{dV_{\sigma_u}}}_{C^1(\Sigma')} \qquad \forall u\in\Y\cap C^1(\Sigma).
        \end{align*}
        \item
        If $\rho$ takes values in $\mathscr{L}^p(\Sigma')$ for some $\Sigma' \subseteq \Sigma$ and $p\in[1,\infty)$, then for every $\delta>0$ there exists a constant $C_{p,\Sigma',\delta}$ such that 
        \begin{align*}
            \norm{\frac{d\rho_j(u)}{dV_{\sigma_u}}}_{L^p(\Sigma'_\delta)} \leq C_{p,\Sigma',\delta}\norm{\frac{d\rho(u)}{dV_{\sigma_u}}}_{L^p(\Sigma')} \qquad\forall u\in\Y.
        \end{align*}
    \end{enumerate}
    Furthermore if $X\in\X(\overline{D(\Sigma)})$ is a continuous vector field, there exists a sequence of smooth vector fields $\{X_j\}$ and a constant $\Lambda\geq 0$ such that 
    \begin{align}
        X_j\to X \quad \text{in $C(\overline{D(\Sigma)})$}
    \end{align}
    and
    \begin{align}\label{eq_Hp_Lambda}
        \abs{\g(X_j,N_{u})} \leq \Lambda w_{u} \qquad \forall u\in\Y
    \end{align}
    where $w_{u} = - \g(T,N_{u})$ and $N_{u}$ is the future pointing unit normal to the graph of $u$. Moreover, if $X$ is $C^1$ on a compact subset $K\subseteq\overline{D(\Sigma)}$ then the $C^1$ norm of each $X_j$ on $K$ satisfies
    \begin{align}
        \norm{X_j}_{1,K} \leq \Lambda.
    \end{align}
        
\end{prop}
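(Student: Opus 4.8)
The plan is to obtain $\rho_j$ and $X_j$ by mollification in local charts, and to verify the listed properties from standard mollifier estimates together with the uniform bounds in Lemma~\ref{conseguenze di C}. First I would fix, once and for all, a finite atlas $\{(U_i,\psi_i)\}_{i=1}^N$ of $\Sigma$ with a subordinate partition of unity $\{\chi_i\}$ satisfying $\supp\chi_i\Subset U_i$, a standard mollifier $\phi$ on $\R^m$, and write $\phi_\epsilon=\epsilon^{-m}\phi(\cdot/\epsilon)$; near $\partial\Sigma$ the charts map into a half-space and one uses a mollifier translated inward, so that convolving with it keeps supports inside $\Sigma$. Given $u\in\Y$, write $\rho(u)=\sum_i\chi_i\rho(u)$, push each summand forward to $\R^m$ by $\psi_i$, convolve with $\phi_{\epsilon_j}$ (admissible once $\epsilon_j$ is below the distance from $\psi_i(\supp\chi_i)$ to $\partial\psi_i(U_i)$, i.e.\ for $j\gg1$), pull back and sum: this defines $\rho_j(u)$. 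Since $\sigma_v$ is smooth whenever $v\in\Y\cap C^\infty(\Sigma)$, the measure $\rho_j(u)$ has smooth density with respect to every such $dV_{\sigma_v}$, so $\rho_j(u)\in\mathscr{C}^\infty(\Sigma)$. The field $X_j$ is obtained by mollifying $X$ componentwise in charts; the scales $\epsilon_j\to0$ are fixed arbitrarily for now and tuned only when proving the localized estimates.

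Properties (i)--(ii) are the easy ones. By \eqref{C_5}, $\Y$ is compact in $C(\Sigma)$, hence $\rho(\Y)$ is weak-$*$ compact in $\cM(\Sigma)$ and therefore norm bounded by the uniform boundedness principle; since chart-localized convolution increases total variation by at most the factor $N$, $\|\rho_j(u)\|_{\cM(\Sigma)}\le N\sup_{\Y}\|\rho(\cdot)\|_{\cM(\Sigma)}=:C_\rho$, which is (ii). For (i), testing $\rho_j(u_j)$ against $\eta\in C_c^1(\mathring\Sigma)$ transfers the convolution onto the test function: $\langle\rho_j(u_j),\eta\rangle=\langle\rho(u_j),\eta_j\rangle$, where $\eta_j$ is a fixed chart-localized convolution of $\eta$ and $\eta_j\to\eta$ uniformly. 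Then $\langle\rho(u_j),\eta_j\rangle=\langle\rho(u_j),\eta\rangle+\langle\rho(u_j),\eta_j-\eta\rangle$; the first term tends to $\langle\rho(u),\eta\rangle$ because $\rho$ is weak-$*$ continuous and $u_j\to u$, the second is at most $C_\rho\|\eta_j-\eta\|_\infty\to0$. No diagonal argument is needed. Likewise $X_j\to X$ in $C(\overline{D(\Sigma)})$ by uniform continuity.

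For (iii)--(iv), in chart $i$ one has $\rho_j(u)=(\phi_{\epsilon_j}*H_i)\,\mathcal{L}^m$ with $H_i=(\psi_i)_*(\chi_i\rho(u))$, so $\tfrac{d\rho_j(u)}{dV_{\sigma_u}}$ is, in coordinates, a finite sum of terms $(\phi_{\epsilon_j}*H_i)/\sqrt{\det\sigma_u}$. For fixed $j$, $\|\phi_{\epsilon_j}*H_i\|_{C^1}\le C_{\epsilon_j}\|H_i\|_{\cM}\le C_{\rho,j}$, while $\sqrt{\det\sigma_u}$ is bounded below by \eqref{C_2} and, for $u\in\Y\cap C^1(\Sigma)$, is $C^1$ with norm bounded uniformly in $u$: by \eqref{eq_sigma_ij} the derivatives $\partial_k(\sigma_u)_{ij}$ are controlled by $\|du\|_\infty$, which is uniformly bounded on $\Y$ by \eqref{C_4}. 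This yields the uniform constant $C_{\rho,j}$. When $\rho$ is valued in $\mathscr{C}^1(\Sigma')$ (resp.\ $\mathscr{L}^p(\Sigma')$), choose $\epsilon_j$ small relative to $\delta$ and to the chart geometry, so that on $\Sigma'_\delta$ the convolution only involves $\rho(u)\llcorner\Sigma'=\tfrac{d\rho(u)}{dV_{\sigma_u}}\,dV_{\sigma_u}$; then on the relevant set $H_i=\chi_i\tfrac{d\rho(u)}{dV_{\sigma_u}}\sqrt{\det\sigma_u}$, and the standard estimates $\|\phi_\epsilon*H_i\|_{C^1(K)}\le\|H_i\|_{C^1(K')}$ and $\|\phi_\epsilon*H_i\|_{L^p(K)}\le\|H_i\|_{L^p(K')}$ (for $\epsilon$ small and $K'$ a small neighbourhood of $K$), divided by the bounded factor $\sqrt{\det\sigma_u}$, give (iii) and (iv). For the $L^p$ estimate only the boundedness of $\sqrt{\det\sigma_u}$ matters, so (iv) holds for all $u\in\Y$; (iii) also uses its $C^1$ regularity, whence the restriction to $u\in\Y\cap C^1(\Sigma)$. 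Finally, for the bound on $X_j$: for any continuous vector field $Y$, decompose $Y=-\g(Y,T)T+Y^\perp$ with $\g(Y^\perp,T)=0$; since $N_u=w_u(U+T)$ with $\g(U,T)=0$ and $|U|_\g<1$, Cauchy--Schwarz on the spacelike hyperplane $T^\perp$ gives $|\g(Y,N_u)|\le w_u\big(|Y^\perp|_\g+|\g(Y,T)|\big)$ (this is essentially \eqref{C_9}). As $|X_j^\perp|_\g+|\g(X_j,T)|\to|X^\perp|_\g+|\g(X,T)|$ uniformly on $\overline{D(\Sigma)}$, these are bounded by some $\Lambda$ for $j$ large, giving \eqref{eq_Hp_Lambda}; and if $X$ is $C^1$ near a compact $K$ then $X_j\to X$ in $C^1(K)$, so after enlarging $\Lambda$ we get $\|X_j\|_{1,K}\le\Lambda$.

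The arguments are essentially routine; the points requiring care are the bookkeeping ones — using inward-translated mollifiers near $\partial\Sigma$ so that $\rho_j(u)$ and $X_j$ remain defined on $\Sigma$ and $\overline{D(\Sigma)}$, and tuning $\epsilon_j$ against the chart geometry and $\delta$ so that the estimates on $\Sigma'_\delta$ involve only data on $\Sigma'$. The one genuinely non-formal step is the uniform $C^1$-bound on $\sqrt{\det\sigma_u}$ over $u\in\Y\cap C^1(\Sigma)$, which relies on the uniform Lipschitz bound for $\Y$, not on any $C^1$-equicontinuity (which fails).
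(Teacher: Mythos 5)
Your proposal is correct and follows essentially the same route as the paper: chart-localized mollification of $\rho(u)$ with the determinant factor $\sqrt{\det\sigma_u^{\varphi_i}}$ controlled via Lemma~\ref{conseguenze di C}, the adjoint/uniform-continuity argument for the joint weak-$*$ convergence in (i), Banach--Steinhaus on the weak-$*$ compact image $\rho(\Y)$ for (ii), and the bound $|\g(X_j,N_u)|\le\sqrt2\,w_u\|X_j\|$ for \eqref{eq_Hp_Lambda}. The only (cosmetic) difference is the treatment of $\partial\Sigma$: the paper extends $\Sigma$ to a larger spacelike hypersurface $\Sigma^1$ and restricts back after convolving, whereas you use inward-translated mollifiers in half-space charts; both are sound.
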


\resumetoc

\section{Outline of the proof of Theorem \ref{main result}}\label{strategy}

To produce a weak solution to \eqref{eq_BI} we will proceed as follows. Let $(\rho_j,X_j)$ the approximation sequence given by Proposition \ref{prop_appendix_sec} for the pair $(\rho,X)$. \\[0.2cm]
\textbf{Step 1.} \textit{Existence of approximating solutions.} \\ We show that the approximate Dirichlet problem
\begin{align}\tag{$\mathrm{PMC}_j$}\label{eq_approx_problem}
    \begin{cases}
        H_{u_j} = \rho_j(u_j) + \g(X_j,N_{u_j}) \\
        u_j\in \Y.
    \end{cases}
\end{align}
admits a smooth, spacelike, weak solution $u_j$. This follows from the gradient estimates of Bartnik in \cite{bartnik84} and a fixed point argument.
\\[0.2cm]
\textbf{Step 2.} \textit{Existence of a limit.} \\ We show that there exists a function $u\in\Y\cap W_\loc^{2,2}(\mathring\Sigma\backslash E)$ such that, up to a subsequence, $u_j\to u$ in $W^{1,p}(\Sigma)$ for each $p<\infty$. The proof of this fact relies on two different estimates:
\begin{itemize}
    \item first, we obtain the bound $\norm{w_j}_{L^1(\Sigma)}\leq C_{\rho,X}$ for the tilt function $w_j$ of $u_j$, see Proposition \ref{stima di energia}. In view of the link between $w_j$ and the energy density in the context of Born-Infeld's theory, we often call $\norm{w_j}_{L^1(\Sigma)}\leq C_{\rho,X}$ the \emph{energy estimate};
    \item second, in every domain $\Sigma\subseteq \mathring \Sigma\backslash E$ where $\rho\in L^2$ we show the bound 
    \begin{align*}
        \int_{\Sigma'_\epsilon} \abs{\second_j}^2 \, dV_{g_j} \leq C_{\rho,X,\epsilon}\left(\norm{\rho}_{L^1(\Sigma)} + \norm{\rho}_{L^2(\Sigma')} + 1\right),
    \end{align*}
    for the second fundamental form $\second_j$ of the graph of $u_j$, see Proposition \ref{stima di seconda}.
\end{itemize}
The latter provides local uniform $W^{2,2}_\loc$ estimates on $\mathring\Sigma\backslash E$, necessary to guarantee the $W^{1,p}$ convergence of $u_j$ to $u$. In fact, the second fundamental form estimate leads to much stronger bounds than merely $W^{2,2}_\loc$ ones, as  it accounts for the second in \eqref{eq_higherinteg}.
\\[0.2cm]
\textbf{Step 3. }\textit{Convergence of the integral identity.} \\ Every $u_j$ being a weak solution, the following integral identity holds for any $\eta\in C_c^1(\mathring\Sigma)$:
\begin{align*}
    \int_\Sigma \eta \frac{\alpha_j^m}{w_j}\diver_{M_j} T \, dV_j = \int_\Sigma w_j\alpha_j^{m-1}\sigma_j(du_j,d\eta) \, dV_j + \int_\Sigma \eta\alpha_j^m \, d\rho_j + \int_\Sigma\eta\alpha_j^m\g(X_j,N_j) \, dV_j
\end{align*}
where $M_j = F_{u_j}(\Sigma)$, $dV_j = dV_{\sigma_j}$, $\rho_j = \rho_j(u_j)$ and so on. To conclude that $u$ is a weak solution with respect to $(\rho,J)$, we shall prove that, along a subsequence,
\begin{align*}
    \lim_{j\to \infty} \int_\Sigma \eta \frac{\alpha_j^m}{w_j}\diver_{M_j}T \, dV_j &= \int_\Sigma \eta w^{-1}\alpha^m \diver_MT \, dV_\sigma \\ 
    \lim_{j\to\infty} \int_\Sigma w_j\alpha_j^{m-1}\sigma_j(du_j,d\eta) \, dV_j &= \int_\Sigma w\alpha^{m-1}\sigma(du,d\eta) \, dV_\sigma \\
    \lim_{j\to\infty} \int_\Sigma \eta\alpha_j^m \, d\rho_j + \int_\Sigma \eta\alpha_j^m\g(X_j,N_j) \, dV_j &= \int_\Sigma \eta\alpha^m \, d\rho + \int_\Sigma \eta \alpha^m\g(X,N) \, dV_\sigma.
\end{align*}
While for the first limit the convergence is straightforward, the matter is more subtle for the last two. Consider for instance the second limit. By Vitali convergence theorem, its validity follows once we know that the family $\{w_j\}$ is locally uniformly integrable. This is achieved in two steps:

\textit{Higher integrability.} We prove that, \emph{in dimension} $m+1 = 3$, the second fundamental form estimate in Step 2 guarantees the uniform integral bound
\begin{align}\label{eq_strategy_high_int}
    \int_{\Sigma_\epsilon'} w_j \ln (1+w_j) \, dV_j \leq C_{\rho,X,\epsilon}\left(\norm{\rho}_{L^1(\Sigma)} + \norm{\rho}_{L^2(\Sigma')} + 1\right)
\end{align}
on every $\Sigma'\Subset\mathring\Sigma$ on which $\rho$ is $L^2$. By de la Vallée-Poussin's theorem, \eqref{eq_strategy_high_int} is enough to conclude the local uniform integrability of $\{w_j\}$ on $\mathring\Sigma\backslash E$.

\textit{Removable singularities.} In order to have local uniform integrability on the whole of $\mathring\Sigma$ we will prove the following Removable Singularity Theorem \ref{singolarità rimovibili}: if $E\Subset \mathring\Sigma$ is a compact set of vanishing $1$-dimensional Hausdorff measure, then
\begin{align*}
        \parbox{4cm}{$\{w_j\}$ is locally uniformly integrable in $\mathring\Sigma\backslash E$} \ \ \iff \ \ \parbox{4cm}{$\{w_j\}$ is locally uniformly integrable in $\mathring\Sigma$.} 
\end{align*}
This completes the proof of the existence statement in Theorem \ref{main result}. 
\\[0.2cm]
\textbf{Step 4. }\textit{Absence of light segments.} \\
As pointed out at the end of the Introduction, this step depends on the dimension restriction and indeed it fails in ambient dimension $m+1 \geq 5$. We prove Step 4 by exploiting the higher integrability inequality \eqref{eq_strategy_high_int}, thus avoiding the variational arguments in \cite{BIMM} which are hardly applicable to our setting. 
\\[0.2cm]
\textbf{Step 5. }\textit{Estimates on the singular set.} \\ Eventually, we prove that the singular set
\begin{align*}
    \mathscr{S} = \Big\{ x \in \Sigma \ : \ \liminf_{r \to 0} \|du\|_{L^\infty(B_r(x),\sigma_u)} = 1 \Big\}.
\end{align*}
has measure zero. Observe that $\mathscr{S}$ is a closed set and that, by definition, the Lipschitz constant of $u$ is locally bounded away from $1$ away from this set. Hence, $\mathscr{S}$ contains all the singularities of the graph of $u$, including the closure of the set of its light segments.
\\[0.2cm]
\textbf{Step 6.} \textit{Higher regularity.} \\ This step, which is \textit{(iv)} in Theorem \ref{main result}, exploits one of the main Theorems by Bartnik \cite{bartnik88}.\\[0.2cm]

We underline that the higher integrability estimate in Step 3 and the no-light-segment Step 4 and the only conclusions in the proof which require a dimensional restriction. We think that establishing their validity (or failure) in ambient dimension $4$ is an intriguing open problem.

\section{Main estimates}

From now on, any constant depending on the chosen smooth, compact, spacelike hypersurfaces $\Sigma$ satisfying \ref{cauchy compatto} and on the splitting time function $\tau$ will be denoted by $C$. Further relevant dependencies will be marked by subscripts. In the course of the proofs their value may vary from line to line.

Consider $\rho\in C^\infty(\Sigma)$ and a smooth vector field $X\in\X(\M)$. All the results in this section will regard smooth classical solutions to the Dirichlet problem
\begin{align}\tag{PMC'}\label{eq_SP}
    \begin{cases}
        H_u = \rho + \g(X,N_u)\\
        u\in\Y
    \end{cases}
\end{align}
which is the model for the approximate problems \eqref{eq_approx_problem}.



\subsection{Energy bound (tilt function estimate)} As a first step, we establish an integral estimate for the tilt function that will be of basic importance for the next arguments. 


\begin{prop}\label{stima di energia}
    There exists constants $C_X$ such that for each smooth spacelike solution $u \in \Y \cap C^\infty(\Sigma)$ to \eqref{eq_SP} it holds
    \begin{align*}
        \int_\Sigma w_u \, dV_{\sigma_u} \le C_X \left[ 1 + \| \rho\|_{L^1(\Sigma)}\right].
    \end{align*}
\end{prop}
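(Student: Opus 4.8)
The plan is to integrate the mean curvature equation \eqref{eq_SP} against a carefully chosen test function and exploit the divergence structure \eqref{eq_mean_curvature}–\eqref{eq_smooth_weak_solution}. The natural first attempt is to test with $\eta\equiv 1$ (or rather an approximation, since we need compact support in $\mathring\Sigma$, using that $u=\varphi$ on $\partial\Sigma$ is smooth). Recall from \eqref{eq_mean_curvature} that $w_uH_u=\diver_{g}(\alpha\nabla u)+\diver_M T$, and from \eqref{eq_bounds_lie} that $|\diver_M T|\le Cw_u^2$. Testing $H_u=\rho+\g(X,N_u)$ with a cutoff and using $w_uH_u\,dV_g = H_u\,\alpha^m\,dV_\sigma$ together with \eqref{volumi} will produce $\int_\Sigma w_u\,dV_\sigma$ against terms involving $\rho$, $\g(X,N_u)$ (bounded by $\Lambda w_u$ via \eqref{eq_Hp_Lambda}/\eqref{C_9}), and the boundary/divergence contributions — but the $\diver_M T$ term is only $O(w_u^2)$, which is too weak. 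So the bare test function $1$ is not enough.

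The better approach is to test against a function that decays where $w_u$ is large, or equivalently to work with the "graph position" identity: consider the vector field $T$ on $\M$ and integrate the first-variation-type identity \eqref{eq_local_diverg_thm} with $X=T$ over $M$, choosing a test function built from the height $u$ itself (or from $\alpha$). Concretely, I expect the right move is to multiply \eqref{eq_mean_curvature} by a function like $(w_u-1)/w_u$ or to integrate $H_u$ against $w_u^{-1}$-type weights so that the dangerous $\diver_M T\sim w_u^2$ term gets paired with a $w_u^{-1}$ factor and becomes $O(w_u)$, which can then be absorbed. Alternatively — and this is the cleanest route — use the divergence theorem on $M$ for the field $X=T$: since $H=\diver_M N$ and $N=w_u(U+T)$ with $U=\balpha\bnabla\bar u$, one gets $\int_M \g(T,N)H\,dV_g = \int_M \diver_g(T^\top)\,dV_g - \int_M\diver_M T\,dV_g$, i.e. (by \eqref{tangente di T}) a relation in which $\int_M w_u H\,dV_g = \int_\Sigma w_u H\, \alpha^m w_u^{-1} dV_\sigma$... let me instead phrase it as: integrate $-\g(T,N)=w_u$ times nothing, and use that $\int_M H\,\g(Z,N)\,dV_g$ is controlled for suitable $Z$ via \eqref{eq_local_diverg_thm} with $X=Z$ a field whose tangential divergence along $M$ is controlled independently of $w_u$. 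Taking $Z=T$ gives exactly $\int_\Sigma H_u\,\alpha^m\,dV_\sigma$ on one side (since $\g(T,N)=-w_u$ cancels the $w_u^{-1}$ in $dV_g$), and $\diver_g T^\top + |\diver_M T|$ on the other; using $|\diver_M T|\le Cw_u^2$ is again too weak, so one refines using the sharper structure of $\lie_T\g$ from \eqref{C_8}, namely $|\lie_T\g(N,N)|\le C\g(T,N)^2=Cw_u^2$ but the trace $\tr_M\lie_T\g$ combined with $\diver_M T$ should, after the cancellation built into the first variation of area, leave only a linear-in-$w_u$ remainder plus a divergence.

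The key steps in order: (1) write the weak formulation \eqref{eq_smooth_weak_solution_sigma} of \eqref{eq_SP} and substitute $H_u=\rho+\g(X,N_u)$; (2) choose the test function $\eta$ — I anticipate $\eta$ should be (an interior approximation of) a fixed smooth function equal to $1$ on most of $\Sigma$, exploiting that $\partial\Sigma$ is smooth spacelike so the boundary layer contributes only a constant $C$; (3) move the $\g(X,N_u)$ term to the left using $|\g(X,N_u)|\le\Lambda w_u$ and absorb it — this forces a smallness/largeness bookkeeping and is where one may need to assume $\Lambda$ small OR, more likely, to use that the problematic term appears with a favorable sign after integration by parts; (4) bound the remaining geometric term $\int\eta\,\alpha^m\,w_u^{-1}\diver_M T\,dV_\sigma$ using \eqref{eq_bounds_lie}, getting $\le C\int\eta\,w_u\,dV_\sigma$, and absorb a fraction of it; (5) conclude $\int_\Sigma w_u\,dV_\sigma\le C_X(1+\|\rho\|_{L^1})$ once all $w_u$-linear terms with small constants are moved to the left — this requires the coefficient comparison, but crucially the $\diver_M T$ term cannot simply be absorbed into $\int w_u$ with a small constant (its constant $C$ is fixed by $\Sigma,\tau$), so the real mechanism must be a genuine cancellation, presumably: $\int_\Sigma w_u^{-1}\alpha^m\diver_M T\,dV_\sigma$ equals, via \eqref{eq_smooth_weak_solution} with $\eta$ replaced by a clever choice, an expression that is $O(1)+o(\int w_u)$.

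The main obstacle, as the paper itself flags, is precisely this: \eqref{eq_SP} is non-variational, so unlike in \cite{BIMM} one cannot just test the convex energy with $u-\varphi$ and read off $\int w_u$ from coercivity. The crux will be finding the correct substitute — I expect one tests \eqref{eq_mean_curvature} against a function of the form $1-e^{-\lambda(u-\varphi)}$ or against $\alpha^{-1}$ or against the "time separation" to a reference Cauchy slice, engineered so that the bad second-order/Lie-derivative term either drops out by an integration by parts or pairs against a $w_u^{-1}$ weight. Getting the signs and the constant $C_X$ to come out right (in particular handling $\g(X,N_u)$, which can have either sign and grows like $w_u$) is the delicate point; I would expect the argument to require either no smallness on $X$ (because the $X$-term, after integration by parts, contributes $\int u\,\diver_M X^{\text{something}}$ which is $O(1)$) or else to be genuinely an estimate with constant depending on $\|X\|$ through $\Lambda$, consistent with the statement's "$C_X$". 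I would also double-check the boundary terms: since $\varphi\in C^\infty$ and $\partial\Sigma$ is smooth spacelike, a collar argument makes these harmless, but one must verify the cutoffs don't reintroduce a $\int w_u$ term with bad constant near $\partial\Sigma$ — Bartnik's boundary gradient estimates (used in Step 1) should control $w_u$ near $\partial\Sigma$ and settle this.
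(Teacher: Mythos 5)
Your proposal correctly diagnoses the two obstructions (the term $\diver_M T=O(w_u^2)$ is too strong for the test function $\eta\equiv 1$, and the problem is non-variational so one cannot read off $\int w_u$ from coercivity of an energy), and your final guess --- testing against something like $1-e^{-\lambda(u-\varphi)}$ --- is in the right family. But the proposal stops at the level of candidate strategies and never executes any of them, so as written there is a genuine gap: you explicitly leave open ``finding the correct substitute'' and ``getting the signs and the constant to come out right'', which is precisely the content of the proof.

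The mechanism in the paper is not a cancellation but an \emph{absorption by a large parameter}. One tests \eqref{eq_smooth_weak_solution} with $\eta=e^{\lambda u}(u-\varphi)_+$ (and then with $e^{-\lambda u}(u-\varphi)_-$). Since $\nabla\eta$ contains the term $\lambda e^{\lambda u}(u-\varphi)_+\nabla u$, the left-hand side $\int g(\alpha\nabla u,\nabla\eta)\,dV_g$ produces $\lambda\int\eta\,\alpha|\nabla u|^2\,dV_g$, which for $\lambda$ large dominates the dangerous right-hand terms: indeed $\diver_M T\le Cw^2=C(\alpha^2|\nabla u|^2+1)$ and $|\g(X,N)|\le C_Xw\le C_Xw^2$, so both contribute at most $C_X\int\eta\,\alpha|\nabla u|^2\,dV_g$ plus bounded quantities, and are absorbed once $\lambda>C_X$. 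The genuinely delicate point --- which you flag but misattribute to Bartnik's boundary gradient estimate --- is the region $U_\delta=\{\varphi\le u\le\varphi+\delta\}$ where $(u-\varphi)_+$ is small and the $\lambda$-term is ineffective. There one must control the cross term $-\int\alpha e^{\lambda u}g(\nabla u,\nabla\varphi)$ directly, using the strict spacelikeness of the boundary datum: $|d\varphi|_{\sigma_u}\le 1-2\theta$ on $\{|u-\varphi|\le\delta\}$ combined with $|Du|\ge 1-\theta$ on the set $E_\theta$ where $w$ is large yields the pointwise inequality $|\nabla u|^2-|\nabla u||\nabla\varphi|\ge\tfrac{\theta}{1-\theta}|\nabla u|^2$, i.e.\ a good sign with a quantitative constant. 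Away from $E_\theta$ everything is trivially bounded, and on $\Sigma_\delta^+=\{u\ge\varphi+\delta\}$ the factor $(u-\varphi)_+\ge\delta$ restores the full strength of the $\lambda$-term. Choosing $\lambda$ so that $\delta(\lambda-C_X)-C_{\varphi,\theta}\ge\tfrac{\theta}{1-\theta}$ closes the estimate, and $\int w_u\,dV_{\sigma_u}=\int w_u^2\,\alpha^{-m}\,dV_{g}\sim\int(\alpha^2|\nabla u|^2+1)\,dV_g$ gives the conclusion. Without this two-fold splitting ($U_\delta$ versus $\Sigma_\delta^+$, and $E_\theta$ versus $E_\theta^c$) and the use of $|d\varphi|_{\sigma_u}<1$, the argument you sketch does not close.
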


\begin{proof}
    Write $\sigma$, $g$, $H$, $\alpha$ to denote, respectively, $\sigma_u$, $g_u$ $H_u$ and $\alpha_u$. By \eqref{eq_smooth_weak_solution} with  $H = \rho + \g(X,N)$ and by density, for any $\eta\in \lip_c(\Sigma)$ we have
    \begin{align*}
        \int_\Sigma g(\alpha\nabla u, \nabla\eta) \, dV_g = \int_\Sigma \eta \diver_MT \, dV_g -\int_\Sigma \eta \rho w \, dV_g - \int_\Sigma\eta\g(X,N) \, dV_g.
    \end{align*}
    Using \eqref{eq_w_nabla_u}, \eqref{volumi} and \eqref{eq_bounds_lie}, that is,
    \begin{align*}
        w^2 = \alpha^2|\nabla u|^2 + 1 \qquad \alpha^m \, dV_\sigma = w \, dV_g \qquad \diver_MT \leq Cw^2,
    \end{align*}
    and Lemma \ref{conseguenze di C} we have
    \begin{align*}
        \int_\Sigma g(\alpha\nabla u,\nabla\eta) \, dV_g &\leq \int_\Sigma |\eta|(\alpha^2|\nabla u|^2 + 1) \, dV_g + \int_\Sigma|\eta\rho| \, dV_g + C_X\int_\Sigma |\eta| w \, dV_g \\ &\leq C_{X}\left(\int_\Sigma \alpha|\eta||\nabla u|^2 \, dV_g + \norm{\eta}_{L^\infty(\Sigma)}\left(\norm{\rho}_{L^1(\Sigma)} + 1\right)\right).
    \end{align*}
    We choose the test function
    \begin{align*}
        \eta \doteq e^{\lambda u}(u-\varphi)_{+}
    \end{align*}
    for some $\lambda>0$ to be specified later so that, again by \eqref{C_1} in Lemma \ref{conseguenze di C} we can find a constant $C_{X,\lambda}$, such that
    \begin{align}\label{prima ineq}
        \int_\Sigma
         g(\alpha \nabla u, \nabla\eta) \, dV_g - C_{X}\int_\Sigma \eta\alpha|\nabla u|^2 \, dV_g \leq C_{X,\lambda}\left(\norm{\rho}_{L^1(\Sigma)} + 1\right).
    \end{align}
    Let $\theta = \theta(\varphi)$ satisfy
    \begin{align*}
        |d\varphi|_{\sigma_\varphi}\leq 1-4\theta \qquad \text{on $\Sigma$.} 
    \end{align*}
    Since the coefficients of $\sigma_u$ depend continuously on $u$, there exists $\delta=\delta(\Sigma,\tau)$ such that for every $u\in\Y$ it holds
    \begin{align}\label{next to u}
        |D\varphi|=|d\varphi|_{\sigma_u}\leq 1-2\theta \qquad \text{on $\{|u-\varphi|\leq \delta\}$ }.
    \end{align} 
    Define
    \begin{align*}
        \Sigma^+ \doteq \{u\geq\varphi\}, \qquad \Sigma^+_\delta\doteq\{u\geq \varphi +\delta\}, \qquad U_\delta\doteq\{\varphi\leq u\leq \varphi+\delta\},
    \end{align*}
    so that $\Sigma^+ = \Sigma_\delta^+  \cup  U_\delta$, we estimate the left-hand-side of the inequality \eqref{prima ineq} as follows
    \begin{align*}
        ({\rm LHS}) \doteq\lambda \int_{\Sigma^+} \eta\alpha|\nabla u|^2 \, dV_g &+ \int_{\Sigma^+} \alpha e^{\lambda u}(|\nabla u|^2 - g(\nabla u,\nabla \varphi)) \, dV_g \\ &- C_X\int_{\Sigma^+}\eta\alpha|\nabla u|^2  \, dV_g\\
        \geq (\lambda-C_X)\int_{\Sigma^+} \eta\alpha|\nabla u|^2 \, dV_g &+ \int_{U_\delta}\alpha e^{\lambda u}(|\nabla u|^2-|\nabla u||\nabla \varphi|) \, dV_g \\
        &+\int_{\Sigma^+_\delta} \alpha e^{\lambda u}(|\nabla u|^2-|\nabla u||\nabla \varphi|) \, dV_g.
    \end{align*}
    We introduce the set $E_\theta\doteq\{|Du|\geq 1-\theta\}$ to estimate the three integrals in the right-hand side as follows: assuming $\lambda \geq C_X$, the first one becomes
    \begin{align*}
\int_{\Sigma^+} \eta\alpha|\nabla u|^2 \, dV_g 
&= \int_{U_\delta} \eta\alpha|\nabla u|^2 \, dV_g + \int_{\Sigma^+_\delta} e^{\lambda u}(u-\varphi)\alpha|\nabla u|^2 \, dV_g \\
&\geq \delta \int_{\Sigma^+_\delta} \alpha e^{\lambda u}|\nabla u|^2 \, dV_g \\
&\geq \delta \int_{\Sigma^+_\delta \cap E_\theta} \alpha e^{\lambda u}|\nabla u|^2 \, dV_g .
\end{align*}
    while, for the second one, from 
    \begin{align*}
        |\nabla \varphi|\leq |D\varphi|\frac{w}{\alpha}\leq (1-2\theta)\frac{w}{\alpha} &\qquad \text{on $U_\delta$ by \eqref{next to u}}, \\ |\nabla u|=|Du|\frac{w}{\alpha}\geq (1-\theta)\frac{w}{\alpha} &\qquad \text{on $E_\theta$}, 
    \end{align*}
    we have the inequalities
    \begin{align*}
|\nabla u|^2 - |\nabla u||\nabla \varphi|
  &\geq \left(1 - \frac{1-2\theta}{1-\theta}\right)|\nabla u|^2
   = \frac{\theta}{1-\theta}|\nabla u|^2
  &\qquad \text{on $U_\delta \cap E_\theta$,} \\[6pt]
|\nabla u|^2 - |\nabla u||\nabla \varphi|
  &\geq -\frac{1}{\alpha^2}\frac{(1-2\theta)(1-\theta)}{1-(1-\theta)^2}\geq -C_\theta
  &\qquad \text{on $U_\delta \cap E_\theta^c$.} 
\end{align*}
    and therefore
    \begin{align*}
        \int_{U_\delta}\alpha e^{\lambda u}(|\nabla u|^2-|\nabla u||\nabla \varphi|) \, dV_g & \ge \frac{\theta}{1-\theta}\int_{U_\delta\cap E_\theta} \alpha e^{\lambda u}|\nabla u|^2 \, dV_g \\ &-C_\theta\int_{U_\delta\cap E_\theta^c} \alpha e^{\lambda u} \, dV_g.
    \end{align*}
    Similarly,
    \begin{align*}
        |\nabla u|^2 - |\nabla u||\nabla\varphi| \geq \left(1-\frac{|D\varphi|}{1-\theta}\right)|\nabla u|^2 =- C_{\varphi,\theta}|\nabla u|^2 &\qquad \text{on $\Sigma^+\cap E_\theta$} \\
        |\nabla u|^2 - |\nabla u||\nabla\varphi| \geq -\frac{1-\theta}{1-(1-\theta)^2}\frac{|D\varphi|}{\alpha^2} \geq -C_{\varphi,\theta} &\qquad \text{on $\Sigma^+\cap E_\theta^c$}
    \end{align*}
    and the third integral can be estimated as follows:
    \begin{align*}
        \int_{\Sigma^+_\delta} \alpha e^{\lambda u}(|\nabla u|^2-|\nabla u||\nabla \varphi|) \, dV_g \geq &-C_{\varphi,\theta}\int_{\Sigma^+_\delta\cap E_\theta}\alpha e^{\lambda u}|\nabla u|^2 \, dV_g \\
        &-C_{\varphi,\theta}\int_{\Sigma^+_\delta\cap E_\theta^c} \alpha e^{\lambda u} \, dV_g.
    \end{align*}
    Summarizing, we infer the following inequality:
    \begin{align*}
        ({\rm LHS}) &\geq \left(\delta(\lambda-C_X)-C_{\varphi,\theta}\right)\int_{\Sigma^+_\delta\cap E_\theta} \alpha e^{\lambda u}|\nabla u|^2 \, dV_g  \\ &+ \frac{\theta}{1-\theta}\int_{U_\delta\cap E_\theta} \alpha e^{\lambda u}|\nabla u|^2 \, dV_g
        -C_{\varphi,\theta}\int_{\Sigma^+ \cap E_\theta^c} \alpha e^{\lambda u} \, dV_g.
    \end{align*}
    Choosing $\lambda$ sufficiently large to satisfy $\delta(\lambda-C_X) - C_{\varphi,\theta} \geq \frac{\theta}{1-\theta}$, rearranging we deduce from \eqref{prima ineq} the inequality 
    \begin{align*}
        \int_{\Sigma^+\cap E_\theta
        }\alpha |\nabla u|^2 \, dV_g &\leq e^{-\lambda\inf u}\frac{1-\theta}{\theta}\left(({\rm LHS})+C_{\varphi,\theta}\int_{\Sigma^+_\delta\cap E_\theta^c} \alpha e^{\lambda u} \, dV_g \right) \\
        &\leq C_{X,\theta} \left(\norm{\rho }_{L^1(\Sigma)} + 1\right).
    \end{align*}
    Repeating the argument with the choice $\eta=e^{-\lambda u}(u-\varphi)_-$, we eventually get
    \begin{align*}
        \int_{\Sigma^-\cap E_\theta} \alpha|\nabla u|^2 \, dV_g \leq C_{X,\theta} \left(\norm{\rho }_{L^1(\Sigma)} + 1\right)
    \end{align*}
    where $\Sigma^-=\{u\leq \varphi\}$, while on $E_\theta^c$ we have the straightforward estimate
    \begin{align*}
        \int_{E_\theta^c}\alpha|\nabla u|^2 \, dV_g \leq \int_{E_\theta^c} \alpha^{m+1}|\nabla u|^2 \, dV_\sigma \leq C \frac{(1-\theta)^2}{1-(1-\theta)^2}|\Sigma| = C_\theta|\Sigma|
    \end{align*}
    thus we obtain
    \begin{align*}
        \int_\Sigma \alpha|\nabla u|^2 \, dV_g \leq C_{X,\theta} \left(\norm{\rho }_{L^1(\Sigma)} + 1\right)
    \end{align*}
    and the conclusion follows by $w^2 = \alpha^2|\nabla u|^2 + 1$ and Lemma \ref{conseguenze di C}.
\end{proof}

\subsection{Removable singularities}

Recall that a sequence $\{w_j\}$ is \emph{uniformly integrable} in $\Sigma'\subseteq \Sigma$ with respect to a measure $\mu$ if for every $\epsilon > 0$ there exists $\delta > 0$ such that
for every measurable set $A \subseteq \Sigma'$ we have
\[
  \mu(A)<\delta \quad \Longrightarrow \quad \sup_j \int_A |w_j| \, d\mu < \epsilon.
\]
The sequence $\{w_j\}$ is said to be \emph{locally uniformly integrable} in an open subset $U\subseteq\mathring\Sigma$ if it uniformly integrable in every compact subset of $U$. 

Note that, by Proposition \ref{conseguenze di C}, (local) uniform integrability with respect to $dV_{\sigma_u}$ is equivalent to that with respect to $dV_{\sigma_v}$ for all $u,v \in \Y$. Therefore, hereafter, we shall refer to any of these notions simply as (local) uniform integrability.

In this subsection we show that if a sequence $\{w_j\}$ as in Section \ref{strategy} is locally uniformly integrable in $\mathring\Sigma\backslash E$ and the compact subset $E$ has vanishing $1$-dimensional Hausdorff measure, then local uniform integrability holds in the whole of $\mathring\Sigma$. The result relies on the following Proposition.

\begin{prop}\label{crescita sulle bolle}
    Let $u\in \Y\cap C^\infty(\Sigma)$ be a smooth solution to \eqref{eq_SP}. For any fixed $x\in\Sigma$, set
    \begin{align*}
        I(s)\doteq \int_{B_s(x)} w_u \, dV_{\sigma_u}
    \end{align*}
    where $B_s(x)$ is the ball with respect to the metric $\sigma_u$. Then there exists a constant $C_X$ such that
    \begin{align*}
        I(s)\leq C_Xs\left(\frac{I(r)}{r}+ \int_{B_r(x)} |\rho| \, dV_{\sigma_u}+r^{m-1}\right)
    \end{align*}
    for $s<r < \di_\sigma(x,\partial\Sigma)$. 
\end{prop}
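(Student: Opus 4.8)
The goal is an integral estimate à la De Giorgi for the quantity $I(s)=\int_{B_s(x)}w_u\,dV_{\sigma_u}$, and the natural tool is the weak formulation \eqref{eq_smooth_weak_solution_sigma} applied to a suitably chosen radial cutoff. The plan is to test the equation with $\eta = \chi(\di_{\sigma_u}(x,\cdot))$, where $\chi$ is a Lipschitz approximation of the indicator of $B_s(x)$ (equal to $1$ on $B_s$, vanishing outside $B_r$, linear in between with slope $-1/(r-s)$), and then let the slope concentrate to produce the derivative $I'(s)$. Concretely, from \eqref{eq_smooth_weak_solution_sigma} with $H=\rho+\g(X,N)$ one gets
\begin{align*}
\int_\Sigma \eta\,\frac{\alpha^m}{w}\diver_M T\,dV_\sigma = \int_\Sigma w\alpha^{m-1}\sigma(du,d\eta)\,dV_\sigma + \int_\Sigma \eta\,\alpha^m\rho\,dV_\sigma + \int_\Sigma \eta\,\alpha^m\g(X,N)\,dV_\sigma,
\end{align*}
and the idea is to exploit the sign of the gradient term. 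Since $d\eta = \chi'\,d(\di_{\sigma_u})$ and $|d(\di_{\sigma_u})|_{\sigma_u}=1$ a.e., one has $\sigma(du,d\eta)\le |du|\,|\chi'|$, but for the key inequality we need the opposite sign: writing $\chi' \le 0$ on the annulus, the term $\int w\alpha^{m-1}\sigma(du,d\eta)$ is bounded above by a positive multiple of $\frac{1}{r-s}\int_{B_r\setminus B_s} w\alpha^{m-1}\,dV_\sigma$ — this is the "outgoing flux" that, after rearrangement, yields a differential inequality for $I$.

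The main mechanism is as follows. Bound $\frac{\alpha^m}{w}|\diver_M T| \le C\alpha^m w$ using \eqref{eq_bounds_lie} and \eqref{C_1}, and $|\alpha^m\g(X,N)|\le C w$ using \eqref{C_9}; also $\alpha^{m-1}\le C$. Rearranging the weak identity and using $\chi \ge 0$, $|du|\le 1$, one arrives at an inequality of the shape
\begin{align*}
\frac{1}{r-s}\int_{B_r\setminus B_s} w\,dV_\sigma \ \ge\ -C\int_{B_r} w\,dV_\sigma \ -\ C\int_{B_r}|\rho|\,dV_\sigma \ -\ C|B_r|,
\end{align*}
up to constants $C=C_X$ and with $|B_r|\le C r^m$ by Hypothesis \ref{cauchy compatto} (uniform comparability of the metrics $\sigma_u$, Lemma \ref{conseguenze di C}). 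Letting the profile $\chi$ converge to the indicator of $B_s$, the left side becomes (a version of) $I'(s)$ in the sense of a.e. differentiation of the monotone function $I$, and we obtain
\begin{align*}
I'(s) \ \ge\ -C_X\Big( I(r) + \int_{B_r(x)}|\rho|\,dV_{\sigma_u} + r^m \Big) \cdot \frac{1}{\text{(something)}},
\end{align*}
which is not quite what we want — so the refinement is to instead use the cutoff in the \emph{reverse} direction, i.e. to bound $I(s)$ from above by the flux through $\partial B_r$ rather than $\partial B_s$. The clean way is: fix $r$, and for $s<r$ choose $\chi$ supported in $B_r$, $\equiv 1$ on $B_s$; then $\int_\Sigma \eta\,\alpha^m\rho\,dV_\sigma$, the $\diver_M T$ term, and the $X$ term are all controlled by $C_X(I(r)+\int_{B_r}|\rho|+r^m)$ with $\eta\le 1$ supported in $B_r$, while the gradient term is $\ge -\frac{C}{r-s}\int_{B_r}w\,dV_\sigma = -\frac{C}{r-s}I(r)$. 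But we actually want a \emph{positive} lower bound involving $I(s)$: this comes from choosing instead a test function that is large where we control things and exploiting that $\diver_M T$ can be of either sign — so the genuinely correct choice is $\eta$ concentrating \emph{near} $\partial B_r$, which makes the LHS pick up the outgoing flux $\approx I'(r)$, giving $I'(r) \ge \frac{1}{C_X}\big(\text{positive}\big) - \cdots$; integrating the resulting ODE $\frac{d}{dr}(r^{-1}I) \lesssim$ (controlled) from $s$ to $r$ and using $I(s)\le$ the integrated quantity yields exactly $I(s)\le C_X s\big(\frac{I(r)}{r}+\int_{B_r}|\rho|+r^{m-1}\big)$.

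\textbf{Main obstacle.} The delicate point is handling $\diver_M T$, which is only bounded by $Cw^2$ pointwise and carries no favorable sign, so it cannot simply be absorbed; the trick (as in Bartnik) is that in the weak identity it appears divided by $w$, giving the borderline-integrable density $\alpha^m w$, and one must carefully set up the radial ODE so that this term sits on the "good side" (proportional to $I(r)$, which is finite by the energy bound Proposition \ref{stima di energia}) rather than being differentiated. A secondary subtlety is the non-smoothness of the distance function $\di_{\sigma_u}(x,\cdot)$ (cut locus), handled by a standard approximation/coarea argument since $|\nabla \di_{\sigma_u}|=1$ a.e.\ and $\di_{\sigma_u}$ is Lipschitz; and one must verify the volume bound $|B_r(x)|\le C r^m$ and surface bound $\mathscr{H}^{m-1}(\partial B_r)\le C r^{m-1}$ uniformly in $u\in\Y$, which follows from \eqref{C_2} comparing $\sigma_u$ with the fixed smooth metric $\sigma_\varphi$. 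Finally, one should double-check that all constants genuinely depend only on $X$ (through $\Lambda$ in \eqref{eq_Hp_Lambda}/\eqref{C_9}) and on $\Sigma,\tau$, not on $\rho$ or $u$, which is why the bound is stated with $C_X$.
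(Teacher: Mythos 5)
There is a genuine gap, and it sits exactly at the point your plan glosses over. Testing \eqref{eq_smooth_weak_solution_sigma} with a plain radial cutoff $\eta$ produces only the three terms $\int \eta\,\alpha^m w^{-1}\diver_M T$, $\int \eta\,\alpha^m(\rho+\g(X,N))$ and the flux term $\int w\alpha^{m-1}\sigma(du,d\eta)$; none of these yields a \emph{positive} quantity comparable to $\int_{B_\zeta}\eta\, w\,dV_\sigma$ on the favourable side of the identity. The term $w^{-1}\diver_M T$ is only bounded by $Cw$ with no sign, $\g(X,N)$ likewise, and $\sigma(du,d\eta)$ has no definite sign, so the plain-cutoff identity collapses to an inequality of the form $|\text{flux through }\partial B_\zeta|\le C\int_{B_r}(w+|\rho|)+C|B_r|$, which is either trivially true or carries no information about $I(s)$. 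Your own intermediate display, $\frac{1}{r-s}\int_{B_r\setminus B_s}w\ge -C\int_{B_r}w-\cdots$, is vacuous (nonnegative $\ge$ nonpositive), and the subsequent fix ("concentrate $\eta$ near $\partial B_r$", "$I'(r)\ge\frac1{C_X}(\text{positive})-\cdots$") names no mechanism that would generate that positive term. Note also that the differential inequality you propose to integrate, $\frac{d}{dr}(r^{-1}I)\lesssim(\text{controlled})$, integrates to a \emph{lower} bound on $I(s)/s$; the statement requires the opposite, i.e.\ that $\zeta\mapsto e^{C\zeta}f(\zeta)/\zeta$ be almost non-decreasing, which is what $f(\zeta)\le\zeta f'(\zeta)+\cdots$ gives.

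The missing idea in the paper's proof is the choice of test function $\eta\,(u-u(x))$ rather than $\eta$ alone. The extra factor does two things at once. First, in the gradient term it produces $\eta\, w\,|Du|^2\alpha^{m-1}=\eta\,(w-w^{-1})\alpha^{m-1}$ via $w^2|Du|^2=w^2-1$, which is precisely the positive quantity $\approx\eta\,w\,\alpha^{m-1}$ one needs on the left-hand side; this is the only place the estimate $I(\zeta)\le\cdots$ can come from. Second, since $|du|_{\sigma_u}\le1$ the factor obeys $|u-u(x)|\le R=\di_{\sigma_u}(x,\cdot)$, so the boundary-flux term $\int R|D\eta|\,w\,\alpha^{m-1}$ is bounded by $\zeta\int_{\partial B_\zeta}w\alpha^{m-1}\,d\mathscr H^{m-1}=\zeta f'(\zeta)$ after letting the cutoff sharpen, and the remaining terms pick up the harmless factor $R\le\zeta$ as well. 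This yields $f(\zeta)\le\zeta f'(\zeta)+\int_0^\zeta t\,h(t)\,dt+C(\zeta f(\zeta)+|B_\zeta|)$, which integrates (via the integrating factor $e^{C\zeta}/\zeta$) to the stated bound. Everything else in your plan — the coarea/approximation handling of the distance function, the bounds $|\diver_M T|\le Cw^2$, $|\g(X,N)|\le Cw$, the uniform volume comparisons from Lemma \ref{conseguenze di C}, and the dependence of constants on $X$ only — is correct and matches the paper, but without the multiplier $(u-u(x))$ the argument does not close.
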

\begin{proof}
    Throughout the proof we will omit the subscript $u$. In \eqref{eq_smooth_weak_solution_sigma} chose as a test function $\eta (u - u(x))$, with $\eta\in C_c^\infty(\mathring\Sigma)$ non-negative: using $w^2|Du|^2 = w^2 - 1$ we have
    \begin{align*}
        \int_\Sigma \eta w\alpha^{m-1} \, dV_\sigma = &- \int_\Sigma (u - u(x)) \sigma(du,d\eta)w\alpha^{m-1} \, dV_\sigma - \int_\Sigma \eta (u - u(x)) H\alpha^m \, dV_\sigma \\ &+ \int_\Sigma \eta (u - u(x)) w^{-1}\alpha^m \diver_MT \, dV_\sigma + \int_\Sigma \eta w^{-1}\alpha^{m-1} \, dV_\sigma.
    \end{align*}
    Let $R = \operatorname{d}_\sigma(x,\cdot)$ be the distance from $x$ in the metric $\sigma = \sigma_u$. Since \ref{cauchy compatto} holds and $|Du|<1$, we can apply the estimates
    \begin{align*}
        C^{-1}\leq \alpha\leq C \qquad \diver_M T \leq Cw^2 \qquad |u - u(x)| \leq R
    \end{align*}
    and write
    \begin{align*}
        \int_\Sigma \eta w\alpha^{m-1} \, dV_\sigma &\leq \int_\Sigma R|D\eta| w\alpha^{m-1} \, dV_\sigma + \int_\Sigma \eta R |H|\alpha^m \, dV_\sigma \\ 
        &+ C\left(\int_\Sigma \eta R w\alpha^{m-1} \, dV_\sigma + \int_\Sigma \eta \, dV_\sigma\right).
    \end{align*}
    For fixed \(0 < \zeta < r < \operatorname{d}_\sigma(x,\partial\Sigma)\) and \(\epsilon>0\) small enough so that $\zeta+\epsilon < r$ we make the following choice for \(\eta\):
    \begin{align*}
        \eta_\epsilon(y)\doteq\left(\min\left\{1,\frac{\zeta+\epsilon-R(y)}{\epsilon}\right\}\right)_+
    \end{align*}
    so that
    \begin{align*}
        \supp \eta_\epsilon = B_{\zeta+\epsilon}, \qquad |D\eta_\epsilon| = \frac{1}{\epsilon} \chi_{B_{\zeta+\epsilon}\backslash B_\zeta}, \qquad \eta_\epsilon \overset{\epsilon\to 0}{\longrightarrow} \chi_{B_\zeta},
    \end{align*}
    where balls are centred at $x$. Observe that, by 
    \eqref{C_9} in Lemma \ref{conseguenze di C}, $\abs H \leq \abs\rho + C_X w$ and by the choice of $\eta_\epsilon$ we have $R\leq r$, whence
    \begin{align*}
        \int_{B_{\zeta+\epsilon}}\eta_\epsilon w \alpha^{m-1} \, dV_\sigma &\leq \int_{B_{\zeta+\epsilon}} R|D\eta| w\alpha^{m-1} \, dV_\sigma + \int_{B_{\zeta+\epsilon}} \eta R\abs\rho \alpha^m \, dV_\sigma \\ &+ C_X\left(\int_{B_{\zeta+\epsilon}} \eta R w \alpha^{m-1} \, dV_\sigma + |B_{\zeta+\epsilon}|\right).
    \end{align*}
    Letting \(\epsilon\to 0\) and applying the coarea formula we have
    \begin{align*}
        \int_{B_\zeta} w\alpha^{m-1} \, dV_\sigma &\leq \zeta \int_{\partial B_\zeta} w\alpha^{m-1} \, d\mathscr{H}^{m-1}_\sigma + \int_0^\zeta t\int_{\partial B_t} |\rho|\alpha^m \, d\mathscr{H}_\sigma^{m-1} dt \\ 
        &+ C_X\left(\zeta\int_{B_\zeta} w\alpha^{m-1} \, dV_\sigma + |B_\zeta|\right)
    \end{align*}
        so if we set
    \begin{align*}
        f(\zeta) = \int_{B_\zeta} w\alpha^{m-1} \, dV_\sigma,\qquad
        h(t) = \int_{\partial B_t} |\rho|\alpha^m \, d\mathscr{H}^{m-1}_\sigma
    \end{align*}
    and observe that \(f'(\zeta) = \int_{\partial B_\zeta} w\alpha^{m-1} \, d\mathscr{H}_\sigma^{m-1}\) by the coarea formula, the above inequality rewrites as
    \begin{align*}
        f(\zeta) \leq \zeta f'(\zeta) + \int_0^\zeta th(t) \, dt + C_X\left(\zeta f(\zeta) + |B_\zeta| \right).
    \end{align*}
    Rearranging terms and writing $C=C_X$ for convenience, 
    \begin{align*}
        -\frac{d}{d\zeta}\left(e^{C\zeta}\frac{f(\zeta)}{\zeta}\right) \leq \frac{e^{C\zeta}}{\zeta^2}\left(\int_0^\zeta th(t) \, dt + C|B_\zeta|\right),
    \end{align*}
    thus integrating on $[s,r]$ yields
    \begin{align*}
        -e^{Cr}\frac{f(r)}{r} + e^{Cs}\frac{f(s)}{s} &\leq \int_s^r \frac{e^{C\zeta}}{\zeta^2}\left(\int_0^\zeta th(t) \, dt + C|B_\zeta|\right) \, d\zeta \\ 
        &\leq Ce^{C\operatorname{diam}_\sigma(\Sigma)} \left(\int_0^r t h(t) \int_{\max\{s,t\}}^r \frac{d\zeta}{\zeta^2} \, dt + \int_s^r \zeta^{m-2} \, d\zeta\right) \\
        &\leq C \left(\int_0^r th(t)\left[-\frac{1}{\zeta}\right]_t^r \, dt + r^{m-1} \right) \\
        &\leq C\left(\int_0^r th(t)\frac{1}{t} \, dt + r^{m-1}\right) \\
        &= C\left(\int_0^r\int_{\partial B_t}|\rho|\alpha^m \, d\mathscr{H}_\sigma^{m-1} \, dt + r^{m-1}\right).
    \end{align*}
    Estimating again $e^{Cr}$ with $e^{C\operatorname{diam}_\sigma\Sigma}$ and $e^{-Cs}$ with $1$ we obtain
    \begin{align*}
        \frac{f(s)}{s} \leq C_Xs \left(\frac{f(r)}{r} + \int_{B_r} |\rho| \, dV_\sigma  + r^{m-1}\right),
    \end{align*}
    the constant $C$ depending only on $\Sigma$ and $\tau$. This gives the thesis, for
    \begin{align*}
        C^{-1}\int_{B_s} w \, dV_\sigma  \leq \int_{B_s} w\alpha^{m-1} \, dV_\sigma \leq C\int_{B_s} w \, dV_\sigma
    \end{align*}
    holds for all $s$ thanks to \ref{cauchy compatto}.
\end{proof}

We are ready to prove our main removable singularity theorem. 

\begin{theorem}\label{singolarità rimovibili}
    Assume \ref{cauchy compatto} holds, let $(\rho_j,X_j)$ as in Proposition \ref{prop_appendix_sec} and let $u_j \in \Y \cap C^\infty(\Sigma)$ be a spacelike solution to 
    \begin{equation*}
        H_{u_j}= \frac{d\rho_j(u_j)}{dV_{\sigma_{u_j}}} + \g( X_j, N_{u_j}).
    \end{equation*}
    Set
    \begin{align*}
        \sigma_j = \sigma_{u_j} \qquad w_j = \frac{1}{\sqrt{1-|du_j|_{\sigma_j}^2}}.
    \end{align*}
    Then, there exists a constant $C_{\rho,X}$ such that for each $x\in \mathring\Sigma$ and each $r<r_x\doteq\frac{1}{2}\inf_j\di_{\sigma_j}(x,\partial\Sigma)$ it holds
    \begin{align*}
        \int_{B_{r}^{\sigma_j}(x)} w_j \, dV_j \leq \frac{C_{\rho,X}}{r_x}r \qquad \forall \,  r\in(0,r_x).
    \end{align*}
    Moreover, if $E\Subset\mathring\Sigma$ is a compact subset such that $\mathscr{H}^1(E)=0$, then
    \begin{align*}
        \parbox{4cm}{$\{w_j\}$ is locally uniformly integrable in $\mathring\Sigma\backslash E$} \ \ \iff \ \ \parbox{4cm}{$\{w_j\}$ is locally uniformly integrable in $\mathring\Sigma$.} 
    \end{align*}
\end{theorem}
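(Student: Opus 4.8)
The plan is to extract from Proposition~\ref{crescita sulle bolle} a linear density bound for the measures $w_j\,dV_{\sigma_j}$ which is uniform in $j$, and then to exploit that $E$ is $\mathscr{H}^1$-null to show that a suitable neighbourhood of $E$ carries arbitrarily little $w_j\,dV_{\sigma_j}$-mass, \emph{uniformly in $j$}; the bad set can then be cut out and the hypothesis on $\mathring\Sigma\setminus E$ applied to the complement. For the density bound, fix $x\in\mathring\Sigma$ and abbreviate $\sigma_j=\sigma_{u_j}$, $dV_j=dV_{\sigma_j}$. Each $u_j$ is a smooth spacelike solution of \eqref{eq_SP} with $C^\infty$ datum $\rho=\frac{d\rho_j(u_j)}{dV_{\sigma_j}}$ and smooth field $X_j$, so Proposition~\ref{crescita sulle bolle} applies; I would choose $r=\tfrac32 r_x$, which satisfies $r<\di_{\sigma_j}(x,\partial\Sigma)$ for every $j$, and any $s\in(0,r_x)$, obtaining
\begin{align*}
    \int_{B_s^{\sigma_j}(x)} w_j\,dV_j \le C_X\,s\left(\frac1r\int_{B_r^{\sigma_j}(x)}w_j\,dV_j+\int_{B_r^{\sigma_j}(x)}\Bigl|\tfrac{d\rho_j(u_j)}{dV_{\sigma_j}}\Bigr|\,dV_j+r^{m-1}\right).
\end{align*}

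Here the energy estimate of Proposition~\ref{stima di energia} gives $\int_\Sigma w_j\,dV_j\le C_X\bigl(1+\norm{\rho_j(u_j)}_{\cM(\Sigma)}\bigr)$, Proposition~\ref{prop_appendix_sec}(ii) gives $\norm{\rho_j(u_j)}_{\cM(\Sigma)}\le C_\rho$, and $\norm{X_j}$ is bounded independently of $j$ since $X_j\to X$ in $C^0$, so $C_X$ is uniform; together these bound the first two terms by a constant $C_{\rho,X}$. Since $\operatorname{diam}_{\sigma_j}(\Sigma)$ is bounded above and below independently of $j$ by Lemma~\ref{conseguenze di C}, one has $r\le C$ and $r_x\le C$, hence $1\le Cr_x^{-1}$; all terms then absorb into $C_{\rho,X}r_x^{-1}$, and we obtain $\int_{B_s^{\sigma_j}(x)}w_j\,dV_j\le C_{\rho,X}\,r_x^{-1}s$ for every $s\in(0,r_x)$, which is the first assertion.

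For the equivalence, the implication ``integrable in $\mathring\Sigma$ $\Rightarrow$ integrable in $\mathring\Sigma\setminus E$'' is immediate, as any compact subset of the open set $\mathring\Sigma\setminus E$ is compact in $\mathring\Sigma$. For the converse, fix a compact $K\subseteq\mathring\Sigma$ and $\epsilon>0$; we must produce $\delta>0$ forcing $\sup_j\int_A w_j\,dV_j<\epsilon$ whenever $A\subseteq K$, $|A|<\delta$. Put $K'=K\cup E\Subset\mathring\Sigma$; by the metric comparison of Lemma~\ref{conseguenze di C}, $\di_{\sigma_j}(\cdot,\partial\Sigma)$ is bounded below on $K'$ by a positive constant uniform in $j$, so $r_x\ge r_0>0$ for all $x\in K'$. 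Given a parameter $\eta>0$, since $\mathscr{H}^1(E)=0$ and $E$ is compact we may cover $E$ by finitely many $\sigma_\varphi$-balls $B^{\sigma_\varphi}_{s_i}(y_i)$ with $y_i\in E$, each $s_i$ small, and $\sum_i s_i<\eta$; each such ball lies in the $\sigma_j$-ball $B^{\sigma_j}_{Cs_i}(y_i)$, so the density bound (centred at $y_i\in K'$) and countable subadditivity give, for the open neighbourhood $V_\eta\doteq\bigcup_i B^{\sigma_\varphi}_{s_i}(y_i)\supseteq E$,
\begin{align*}
    \sup_j\int_{V_\eta}w_j\,dV_j\le\sup_j\sum_i\int_{B^{\sigma_j}_{Cs_i}(y_i)}w_j\,dV_j\le\frac{C\,C_{\rho,X}}{r_0}\sum_i s_i\le C_*\,\eta.
\end{align*}
Choosing $\eta$ with $C_*\eta<\epsilon/2$, the set $K\setminus V_\eta$ is a compact subset of $\mathring\Sigma\setminus E$, so by hypothesis there is $\delta>0$ with $\sup_j\int_{A'}w_j\,dV_j<\epsilon/2$ whenever $A'\subseteq K\setminus V_\eta$ and $|A'|<\delta$. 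Splitting an arbitrary $A\subseteq K$ with $|A|<\delta$ as $A=(A\cap V_\eta)\cup(A\setminus V_\eta)$ and adding the two bounds yields $\sup_j\int_A w_j\,dV_j<\epsilon$, proving uniform integrability of $\{w_j\}$ on $K$, hence local uniform integrability on $\mathring\Sigma$.

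The substance of the argument is already in Proposition~\ref{crescita sulle bolle}; the rest is bookkeeping, and the point that needs care is that \emph{every} constant above be independent of $j$: the energy constant $C_X$ (finite because $X_j\to X$ in $C^0$), the total-variation bound $C_\rho$ of Proposition~\ref{prop_appendix_sec}, and the metric and diameter comparison constants of Lemma~\ref{conseguenze di C}. The conceptual mechanism worth highlighting is that Proposition~\ref{crescita sulle bolle} upgrades the mere finiteness of $\int_\Sigma w_j\,dV_j$ into a uniform \emph{linear} density estimate $\int_{B_r(x)}w_j\,dV_j\lesssim r$; it is exactly this $1$-dimensional growth, matched against $\mathscr{H}^1(E)=0$, that lets $E$ be removed no matter how violently the tilt functions $w_j$ degenerate there.
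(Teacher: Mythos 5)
Your proposal is correct and follows essentially the same route as the paper: apply Proposition \ref{crescita sulle bolle} with outer radius comparable to $r_x$, control the right-hand side uniformly in $j$ via the energy estimate, the total-variation bound of Proposition \ref{prop_appendix_sec}(ii) and the diameter bounds of Lemma \ref{conseguenze di C}, and then use the resulting linear density estimate to show that a finite ball cover of $E$ with small total radius carries uniformly small $w_j$-mass, after which the hypothesis on the complement finishes the argument. The only differences (choosing $\tfrac32 r_x$ instead of $r_x$ as the outer radius, covering all of $E$ rather than $E\cap\overline{\Sigma'}$) are immaterial.
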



\begin{proof}
    Since $B_{r_x}(x)\Subset \mathring\Sigma$ we can apply Proposition \ref{crescita sulle bolle}, the energy estimate in Proposition \ref{stima di energia} and \textit{(ii)} in Proposition \ref{prop_appendix_sec} to deduce
    \begin{align*}
        \int_{B^{\sigma_j}_{r}(x)} \, w_j \, dV_j &\leq r C_X \left(\frac{1}{r_x}\int_{B^{\sigma_j}_{r_x}(x)} \,  w_j \, dV_j + \int_{B_{r_x}(x)} |\rho_j(u_j)| \, dV_j + r_x^{m-1} \right) \\
        &\leq \frac{C_{\rho,X}}{r_x}r
    \end{align*}
    for every $r\in (0,r_x)$. This proves the first statement in the theorem. Next, let $\Sigma'\Subset \mathring{\Sigma}$ and define
    \begin{align*}
        r_0 = \frac{1}{8}\inf_{u\in\Y} \di_{\sigma_u}(\Sigma',\partial\Sigma).
    \end{align*}
    which is positive by Lemma \ref{conseguenze di C}. Combining the first part of our theorem with Lemma \ref{conseguenze di C} we also deduce that
    \begin{align*}
        \int_{B_{r}(x)} \, w_j \, dV_j \leq Cr \qquad \forall x\in B_{r_0}(\Sigma') , \,\forall r<r_0,
    \end{align*}
    where $B_r$ is the geodesic ball in the metric $\sigma_\varphi$ and $C=C_{\rho, X, \Sigma'}$. Let $\epsilon>0$ and choose $\lambda\in(0,\epsilon/C)$. Since $\mathscr{H}^1(E)=0$, we can pick a finite covering $\{B_i\}_{i=1}^N = \{B_{r_i}(x_i)\}_{i=1}^N$ of $E\cap \overline\Sigma'$ with $\sigma_\varphi$-balls of radii $r_i\leq r_0$ and centred at points $x_i\in B_{r_0}(\Sigma')$ satisfying $\sum_i r_i \leq \lambda$. Also, let $\delta=\delta(\epsilon)>0$ be as in the definition of uniform integrability of $\{w_j\}$ in $\Sigma'\backslash\bigcup_i B_i$. Then for every measurable subset $A\subseteq \Sigma'$ such that $|A|_{\sigma_\varphi}<\delta$ it holds
    \begin{align*}
        \int_{A} w_j \, dV_{\sigma
        _\varphi} = \int_{A\backslash\bigcup_i B_i } w_j \, dV_{\sigma
        _\varphi} + \int_{A\cap \bigcup_iB_i} w_j \, dV_{\sigma_\varphi} \leq \epsilon + \int_{A\cap \bigcup_i B_i} w_j \, dV_{\sigma
        _\varphi}.
    \end{align*}
    On the other hand, 
    \begin{align*}
        \int_{A\cap \bigcup_i B_i} w_j \, dV_{\sigma_\varphi} \leq \sum_{i=1}^N \int_{B_{r_i}(x_i)} w_j \, dV_{\sigma
        _\varphi} \leq C\sum_{i=1}^N r_i \leq  C\lambda < \epsilon.
    \end{align*}
    Summarizing
    \begin{align*}
        |A|_{\sigma_\varphi}<\delta \quad \Longrightarrow \quad \int_{A} w_j \, dV_{\sigma
        _\varphi} \leq 2\epsilon
    \end{align*}
    which was to be proved.
\end{proof}

\subsection{Second fundamental form estimate}\label{sec_secondfund}

In this section we establish a local integral estimate for the squared norm of the second fundamental form. This is a key tool for it ensures a $W^{2,2}_\loc$ estimate for the approximating sequence, Corollary \ref{Dlnw e D^2u}, and plays a role in proving uniform integrability as well (Proposition \ref{higher integrability}).

Our starting point is the Jacobi equation computed in \cite[Proposition 2.1]{bartnik84}:
\begin{align*}
    \Delta_g w = w\left(|\second|^2+\Ric(N,N)\right)-g(\nabla H,T^{\top})+T(H_T)
\end{align*}
where $T(H_T)$ is the derivative of the mean curvature of the deformations of $M$ in direction $T$, and satisfies
\begin{align}\label{T(H_T)}
    T(H_T)&=\frac{1}{2}(\bnabla_N\lie_T\g)(e_i,e_i)-(\bnabla_{e_i}\lie_T\g)(N,e_i)+\\\nonumber &-\frac{1}{2}H\lie_T\g(N,N)-\lie_T\g(e_i,e_j)\second(e_i,e_j),
\end{align}
where $\{e_i\}$ is an orthonormal tangent frame. The first step is to rewrite this equation in a more convenient way.

\begin{lemma}
    Let $Z\in\X(\Sigma)$ be the tangential component of $(\iota_N \mathscr{L}_T\g)^\sharp$ and set
    \begin{align*}
        Y\doteq\frac{\nabla w+HT^{\top}+Z}{w}
    \end{align*}
    then
    \begin{align}\label{jacobi}
        \diver_g Y=|\second|^2&-H^2-g\left(\frac{\nabla w}{w},Y\right)+\Ric(N,N) \\&+Hw^{-1}\left(\lie_T\bar g(N,N)+\diver_{\g}T \right) \nonumber +\frac{1}{2}w^{-1}\tr_M(\bnabla_N\mathscr{L}_T\g).
    \end{align}
\end{lemma}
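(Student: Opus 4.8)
The plan is to verify \eqref{jacobi} by a direct computation: expand $\diver_g Y$, substitute Bartnik's Jacobi equation and the expression \eqref{T(H_T)} for $T(H_T)$, and check that all the auxiliary terms produced along the way cancel in pairs. First, set $W \doteq \nabla w + H T^{\top} + Z$, so that $Y = w^{-1} W$. The Leibniz rule for the divergence on $(\Sigma,g)$ gives
\begin{align*}
\diver_g Y = w^{-1}\diver_g W + g\big(\nabla(w^{-1}),W\big) = w^{-1}\diver_g W - g\Big(\tfrac{\nabla w}{w},Y\Big),
\end{align*}
which already accounts for the term $-g(\nabla w/w,Y)$ in \eqref{jacobi}, so it remains to show that $\diver_g W = \Delta_g w + \diver_g(HT^{\top}) + \diver_g Z$ equals $w$ times the remaining right-hand side of \eqref{jacobi}.

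For $\Delta_g w$ I would substitute the Jacobi equation of \cite[Proposition 2.1]{bartnik84} together with \eqref{T(H_T)}. For $\diver_g(HT^{\top}) = g(\nabla H,T^{\top}) + H\diver_g T^{\top}$ I would rewrite the scalar $\diver_g T^{\top}$ in ambient terms: by \eqref{eq_local_diverg_thm} applied to $X=T$ one has $\diver_g T^{\top} = \diver_M T - wH$, and combining $\diver_M T = \diver_{\g}T + \g(\bnabla_N T,N)$ with the identity $\g(\bnabla_N T,N) = \tfrac12\lie_T\g(N,N)$ this becomes $\diver_g T^{\top} = \diver_{\g}T + \tfrac12\lie_T\g(N,N) - wH$. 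The key piece is $\diver_g Z$: in a local $g$-orthonormal frame $\{e_i\}$ that is geodesic at the point, writing $Z = \psi_i e_i$ with $\psi_i = \lie_T\g(N,e_i)$, one has $\diver_g Z = \sum_i e_i\psi_i$; differentiating $\psi_i$ along $M$ through the ambient connection and using the Gauss formula $\bnabla_{e_i}e_i = \nabla_{e_i}e_i + \second(e_i,e_i)N$ and the Weingarten formula $\bnabla_{e_i}N = \second(e_i,e_j)e_j$ yields
\begin{align*}
\diver_g Z = (\bnabla_{e_i}\lie_T\g)(N,e_i) + \lie_T\g(e_i,e_j)\second(e_i,e_j) + H\,\lie_T\g(N,N).
\end{align*}

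Finally I would add the three contributions and collect. The two copies of $g(\nabla H,T^{\top})$ cancel; the term $-(\bnabla_{e_i}\lie_T\g)(N,e_i)$ in \eqref{T(H_T)} cancels the first term of $\diver_g Z$, and $-\lie_T\g(e_i,e_j)\second(e_i,e_j)$ in \eqref{T(H_T)} cancels the second; the $\pm\tfrac12 H\lie_T\g(N,N)$ terms, one coming from \eqref{T(H_T)} and one from the rewriting of $\diver_g(HT^{\top})$, cancel as well. What survives is
\begin{align*}
\diver_g W = w|\second|^2 - wH^2 + w\Ric(N,N) + H\big(\diver_{\g}T + \lie_T\g(N,N)\big) + \tfrac12\tr_M(\bnabla_N\lie_T\g),
\end{align*}
and dividing by $w$ and inserting into the first display gives \eqref{jacobi}. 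The main obstacle is purely the bookkeeping of signs — in particular getting the Lorentzian trace identity $\diver_M T = \diver_{\g}T + \g(\bnabla_N T,N)$ and the Weingarten formula exactly right, since it is precisely $\diver_g Z$ that has to reinstate the two terms of \eqref{T(H_T)} proportional to $\second$ and to $H$ that would otherwise be left unmatched.
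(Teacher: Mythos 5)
Your computation is correct and is essentially the paper's proof run in the opposite direction: the paper rewrites $(\bnabla_{e_i}\lie_T\g)(N,e_i)$ via the tangential divergence identity to turn $T(H_T)$ into $\tfrac12(\tr_M\bnabla_N\lie_T\g+H\lie_T\g(N,N))-\diver_g Z$ and then absorbs $g(\nabla H,T^\top)$ into $\diver_g(HT^\top)$, whereas you expand $\diver_g(HT^\top+Z+\nabla w)$ directly and verify the same cancellations term by term. All the signs (Gauss--Weingarten, $\diver_g T^\top=\diver_{\g}T+\tfrac12\lie_T\g(N,N)-wH$, and the surviving $+H\lie_T\g(N,N)$ from $\diver_g Z$) check out, so nothing further is needed.
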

\begin{proof}
First observe that
\begin{align*}
    \bnabla_{e_i}(\iota_N \lie_T\g)(e_i) &= e_i(\lie_T\g(N,e_i))-\lie_T\g(N,\bnabla_{e_i}e_i) \\ &= (\bnabla_{e_i}\lie_T\g)(e_i) -\lie_T\g(\bnabla_{e_i}N,e_i)
\end{align*}
thus recalling \eqref{eq_local_diverg_thm} we have
\begin{align*}
    (\bnabla_{e_i}\lie_T\g)(N,e_i)&=\bnabla_{e_i}(\iota_N \lie_T\g)(e_i)-\lie_T\g(\bnabla_{e_i}N,e_i)\\
    &=\diver_M(\iota_N \lie_T\g)-\second(e_i,e_j)\lie_T\g(e_j,e_i)\\
    &=\diver_g(\iota_N \lie_T\g)^{\top}-H\lie_T\g(N,N)-\second(e_i,e_j)\lie_T\g(e_j,e_i).
\end{align*}
Thus the derivative of the mean curvature reduces to
\begin{align*}
    T(H_T)=\frac{1}{2}(\tr_M\bnabla_N\lie_T\g+H\lie_T\g(N,N))-\diver_gZ,
\end{align*} 
as a consequence
\begin{align*}
    \Delta_g w =& w\left(|\second|^2+\Ric(N,N)\right)-g(\nabla H,T^{\top})\\&+\frac{1}{2}(\tr_M\bnabla_N\lie_T\g+H\lie_T\g(N,N))-\diver_gZ.
\end{align*}
By \eqref{eq_local_diverg_thm} we have
\begin{align*}
    g(\nabla H,T^{\top}) &= \diver_g(HT^{\top})-H\diver_g T^{\top} = \diver_g(HT^{\top}) + wH^2 - H\diver_M T \\ &= \diver
    _g(HT^{\top}) + wH^2 - H\left(\frac{1}{2}\lie_T\g(N,N)+\diver_{\g}T\right)
\end{align*}
thus substituting we have
\begin{align*}
    \Delta_g w &= w(|\second|^2-H^2+\Ric(N,N))-\diver_g(HT^{\top}+Z)+\\&+H(\lie_T\g(N,N)+\diver_{\g}T)+\frac{1}{2}\tr_M\bnabla_N\lie_T\g.
\end{align*}
Dividing by $w$ and noticing that $\frac{\Delta_g w}{w}=\left\vert\frac{\nabla w}{w}\right\vert^2+\diver_g\left(\frac{\nabla w}{w}\right)$ we obtain the desired formula.
\end{proof}

It is worth noting already that, thanks to the energy estimate in Proposition \ref{stima di energia}, if the hypersurface in question satisfies \ref{cauchy compatto}, which we will assume from now on, the last three terms in \eqref{jacobi} are of no concern when integrating the equation against a test function $\eta^2$, so, after integrating by parts,
\begin{align*}
    \int_\Sigma \eta^2|\second|^2 \, dV_g \leq -2\int_\Sigma \eta g(\nabla\eta,Y) \, dV_g +\int_\Sigma g\left(\frac{\nabla w}{w},Y\right) + \int_\Sigma \eta^2 H^2 + \text{bounded}.
\end{align*}
We can see at this point that the problematic terms are those containing $\frac{\nabla w}{w}$, which necessarily need to be absorbed in the left-hand side. Note that from \eqref{eq_w_nabla_u}
\begin{align}\label{gradiente di w}
        \nabla w = \frac{|\nabla u|^2\alpha}{w}\nabla\alpha + \frac{|\nabla u|\alpha^2}{w}\nabla|\nabla u|
    \end{align}
    hence, using $|\nabla\alpha|\leq Cw$ and $|\nabla u|=\frac{w}{\alpha}|Du|\leq \frac{w}{\alpha}$, we have
    \begin{align}\label{eq_nablaw_ealtro}
        \left\vert\frac{\nabla w}{w}\right\vert^2
        &\leq (1+\epsilon)\frac{w^2}{\alpha^2}|\nabla|\nabla u||^2+ C_\epsilon w^2.
    \end{align}
We see now that in order to absorb $\frac{\nabla w}{w}$, the norm $|\second|^2$ shall exceed $c\frac{w^2}{\alpha^2}|\nabla|\nabla u||^2$, for some constant $c$ greater than one, up to some further terms we can control. Such a refined Kato inequality is achieved in the following

\begin{prop}
    For all $\epsilon>0$ small enough there exists a positive constant $C_{m,\epsilon}$ such that on $\{\nabla u \neq 0\}$
\begin{align}\label{stima seconda da sotto}
    \abs{\second}^2 &\geq \frac{\alpha^2}{w^2}\left[\left(\frac{m}{m-1}-\epsilon\right)|\nabla|\nabla u||^2 + (1-\epsilon)\abs{\nabla u}^2|\mathring A|^2 \right] - C_{m,\epsilon} (H^2+w^2)
\end{align}
    where $\mathring A$ is the traceless  part of the second fundamental form $A$ of the level sets of $u$ in $(\Sigma,g)$. Moreover, there exists $C>0$ depending only on $\Sigma$ and $\tau$ such that on $\{\nabla u \neq 0\}$
    \begin{align}\label{stima seconda da sopra}
        |\second|^2\leq C\left(\frac{\alpha^2}{w^2}|\nabla|\nabla u||^2+\frac{\alpha^2|\nabla u|^2}{w^2}|\mathring A|^2+H^2+w^2\right).
    \end{align}
Furthermore, the two inequalities hold a.e and in $L^\infty(\Sigma)$ by setting 
\[
0 = |\nabla|\nabla u||^2 + |\mathring A|^2|\nabla u|^2 \qquad \text{on } \, \{\nabla u = 0\}.
\]
\end{prop}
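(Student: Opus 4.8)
The plan is to first establish a purely pointwise, linear-algebraic refined Kato inequality for the Hessian $\nabla^2u$ (with respect to $g=g_u$), and then transfer it to $\second$ through identity \eqref{seconda}, absorbing the lower order tensor $R\doteq du\odot d\alpha+\tfrac12F^*\lie_T\g$ and the trace $\Delta_g u$ by the a priori bounds of Lemma \ref{conseguenze di C} and \eqref{eq_bounds_lie}.

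First I would work on the open set $\{\nabla u\neq0\}$, put $v\doteq|\nabla u|=|Du|\,w/\alpha$, $\nu\doteq\nabla u/v$, and decompose $\nabla^2u$ in a $g$-orthonormal frame $\{\nu,e_2,\dots,e_m\}$ with $e_2,\dots,e_m$ tangent to the level set $\{u=\mathrm{const}\}$. A direct computation from $\nabla u=v\nu$ gives $\nabla^2u(\nu,\nu)=\nu(v)$, $\nabla^2u(\nu,e_a)=e_a(v)$ and $\nabla^2u(e_a,e_b)=vA_{ab}$, where $A$ is the second fundamental form of the level set in direction $\nu$ (so $\tr A$ is its mean curvature and $\mathring A$ its traceless part). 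Using $\sum_a e_a(v)^2=|\nabla v|^2-\nu(v)^2$, $|A|^2=|\mathring A|^2+\tfrac1{m-1}(\tr A)^2$ and $\Delta_g u=\nu(v)+v\tr A$, one gets the identity
\[
|\nabla^2u|^2 = 2|\nabla v|^2-\nu(v)^2+v^2|\mathring A|^2+\frac{(\Delta_g u-\nu(v))^2}{m-1}.
\]
The core pointwise step is then: applying Young's inequality $(\Delta_g u-\nu(v))^2\ge(1-\epsilon)\nu(v)^2-\epsilon^{-1}(\Delta_g u)^2$ and $\nu(v)^2\le|\nabla v|^2$, the coefficient of $|\nabla v|^2$ turns out to be $2-\tfrac{m-2+\epsilon}{m-1}=\tfrac{m-\epsilon}{m-1}$, so that
\[
|\nabla^2u|^2\ge\Big(\tfrac{m}{m-1}-\tfrac{\epsilon}{m-1}\Big)|\nabla v|^2+v^2|\mathring A|^2-\tfrac1{\epsilon(m-1)}(\Delta_g u)^2,
\]
together with the crude reverse bound $|\nabla^2u|^2\le C_m\big(|\nabla v|^2+v^2|\mathring A|^2+(\Delta_g u)^2\big)$, immediate from the same identity.

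To conclude I would feed these into \eqref{seconda}, written as $w\second=\alpha\nabla^2u+R$: Cauchy--Schwarz and Young give $w^2|\second|^2\ge(1-\epsilon)\alpha^2|\nabla^2u|^2-\epsilon^{-1}|R|^2$ and $w^2|\second|^2\le 2\alpha^2|\nabla^2u|^2+2|R|^2$. Here $|R|_g\le v|\nabla\alpha|_g+\tfrac12|F^*\lie_T\g|_g\le Cw^2$ (using $|\nabla\alpha|\le Cw$, $|Du|<1$, $C^{-1}\le\alpha\le C$ and the $\lie_T\g$-bound, the last measured in $\ee$ via $|\g(T,e)|\le w$ for $g$-unit $e\in TM$), while \eqref{eq_mean_curvature} reads $\alpha\Delta_g u=wH-g(\nabla\alpha,\nabla u)-\diver_MT$, so $(\Delta_g u)^2\le C(w^2H^2+w^4)$. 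Inserting the bounds of the previous paragraph, dividing by $w^2$, and using $\tfrac{\alpha^2}{w^2}(\Delta_g u)^2\le C(H^2+w^2)$ and $\tfrac{|R|^2}{w^4}\le C\le Cw^2$, one obtains \eqref{stima seconda da sotto} and \eqref{stima seconda da sopra} pointwise on $\{\nabla u\neq0\}$ after relabelling $\epsilon$; the weight $(1-\epsilon)$ on $|\nabla u|^2|\mathring A|^2$ is exactly the loss in absorbing the cross term $2\alpha\langle\nabla^2u,R\rangle$, whereas the constant $\tfrac m{m-1}$ survives because it is first \emph{improved} by the term $\tfrac1{m-1}(\Delta_g u-\nu(v))^2$. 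On $\{\nabla u=0\}$ I would argue that, $u$ being smooth, each component of $\nabla u$ vanishes identically there, so $\nabla^2u=0$ a.e. on this set; there $w=1$, $du=0$, hence $w\second=\tfrac12F^*\lie_T\g$ and $|\second|^2\le C\le Cw^2$, so \eqref{stima seconda da sopra} holds with the stated convention while \eqref{stima seconda da sotto} holds trivially (its left side is $\ge0$, its right side $\le0$). Finally $v^2|\mathring A|^2\le|\nabla^2u|^2$ remains bounded on all of $\Sigma$ and vanishes a.e. on $\{v=0\}$, which yields the $L^\infty$ statement.

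I expect the real difficulty — beyond the bookkeeping — to lie in the distribution of the $\epsilon$'s: the coefficient $2$ of $|\nabla v|^2$ must be split so that, after paying for both $R$ and the trace $\Delta_g u$, the sharp constant $\tfrac m{m-1}$ is preserved; this is precisely what forces the asymmetric $(1-\epsilon)$ weight on $|\mathring A|^2$ and dictates the shape of \eqref{stima seconda da sotto}.
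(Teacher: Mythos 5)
Your proof is correct and follows essentially the same route as the paper's: the same level-set-adapted orthonormal decomposition of $\nabla^2u$ (your identity $u_{11}^2+2|\nabla^\top v|^2=2|\nabla v|^2-\nu(v)^2$ is the paper's \eqref{espressione hessiana u} in disguise), the same Young-inequality distribution preserving the constant $\tfrac{m}{m-1}$, the same absorption of $R$ and $\Delta_g u$ via \eqref{seconda}, \eqref{eq_mean_curvature} and \eqref{eq_bounds_lie}, and the same Stampacchia argument on $\{\nabla u=0\}$. No gaps.
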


\begin{proof}
Taking norms in \eqref{seconda} and denoting by $L\doteq F^*\lie_T\g$, we get
\begin{align}\label{norma di seconda}
    w^2|\second|^2 &= \alpha^2|\nabla^2 u|^2 + \frac
    {1}{2}g(\nabla u,\nabla\alpha)^2+ \frac{1}{2}|\nabla u|^2|\nabla\alpha|^2 + |L|^2/4 + \\ \nonumber &+ L(\nabla u,\nabla\alpha) + 2\alpha\nabla^2 u(\nabla u,\nabla\alpha) +\alpha g(\nabla^2u,L).
\end{align}
First, observe that by Stampacchia's theorem $\nabla^2 u \equiv 0$ a.e. on $\{\nabla u = 0\}$, so by \eqref{norma di seconda} $|\second| \equiv \frac{\abs{L}}{2}$ a.e. on the same set. Hence, \eqref{stima seconda da sopra} holds in $L^\infty(\Sigma)$ once it holds pointwise on $\{\nabla u \neq 0\}$. Our choice for $|\nabla|\nabla u||^2 + |\mathring A|^2|\nabla u|^2$ on $\{\nabla u = 0\}$ guarantees that the same happens for  \eqref{stima seconda da sotto}, so we only need to check the inequalities on $\{\nabla u \neq 0\}$. Using the Cauchy and Young inequalities $ab\leq \frac{\epsilon}{2}a^2+\frac{1}{2\epsilon}b^2$, for any $\epsilon>0$ we can write
\begin{align}
    w^2\abs{\second}^2\geq (1-\epsilon)\alpha^2|\nabla^2u|^2-C_\epsilon(|L|^2+|\nabla\alpha|^2|\nabla u|^2).
\end{align}
By \eqref{eq_bounds_lie}, we can estimate
\begin{align*}
    |L|^2 &= F^*|\lie_T\g|^2_{\g} + 2|F^*\iota_N\lie_T\g|_{g}^2 - \lie_T\g(N,N) \leq Cw^2
\end{align*}
so
\begin{align}\label{prima stima da sotto}
    |\second|^2 \geq (1-\epsilon)\frac{\alpha^2}{w^2}|\nabla^2 u|^2 - C_\epsilon w^2
\end{align}
and we only need to study the norm of the Hessian of $u$. Pick a point $x \in \{\nabla u \neq 0\}$, choose a local $g$-orthonormal local frame $\{e_j\}$ around $x$ with $e_1\doteq\frac{\nabla u}{\abs{\nabla u}}$ and denote by $u_{ij}$ the associated components of the Hessian of $u$. Noting that $g(\nabla\abs{\nabla u},e_j)= u_{1j}$, we have at $x$
\begin{align*}
    \abs{\nabla^2 u}^2 &= u_{11}^2 + 2\sum_{a=2}^m u_{1a}^2 + \sum_{a,b=2}^m u_{ab}^2 \\
    &= u_{11}^2 + 2\abs{\nabla^{\top}|\nabla u|}^2 + \sum_{a,b=2}^m u_{ab}^2
\end{align*}
where $\nabla^{\top}$ is the tangential gradient on $\{u = u(x)\}$. Since the second fundamental form of the level sets has components $\frac{u_{ab}}{\abs{\nabla u}}$, by definition of $\mathring A$ we have
\begin{align*}
    \sum_{a=2}^m u_{ab}^2 = \abs{\nabla u}^2|\mathring A|^2 + \frac{1}{m-1}\left(\sum_{a=2}^m u_{aa}^2 \right) = \abs{\nabla u}^2|\mathring A|^2 + \frac{(\Delta_g u - u_{11})^2}{m-1}
\end{align*}
which leads to
\begin{align}\label{espressione hessiana u}
    |\nabla^2u|^2&=u_{11}^2+2|\nabla^{\top}|\nabla u||^2+|\nabla u|^2|\mathring A|^2+\frac{(\Delta_gu-u_{11})^2}{m-1}.
\end{align}
Using the inequality $(a+b)^2 \geq (1-\epsilon)a^2 + \frac{\epsilon-1}{\epsilon}b^2$ to the last term we have
\begin{align*}
    |\nabla^2u|^2
    &\geq \frac{m-\epsilon}{m-1}|\nabla|\nabla u||^2+\frac{m-2+\epsilon}{m-1}|\nabla^{\top}|\nabla u||^2+|\nabla u|^2|\mathring A|^2+\frac{\epsilon-1}{\epsilon(m-1)}(\Delta_g u)^2 \\
    &\geq \frac{m-\epsilon}{m-1}|\nabla|\nabla u||^2+|\nabla u|^2|\mathring A|^2-\frac{1-\epsilon}{\epsilon(m-1)}(\Delta_g u)^2.
\end{align*}
Now by \eqref{eq_mean_curvature} and \eqref{eq_bounds_lie}, $ (\Delta_g u)^2\leq w^2 C(H^2+w^2)$. Thus
substituting into \eqref{prima stima da sotto} we obtain
\begin{align*}
    \abs{\second}^2 &\geq \frac{\alpha^2}{w^2}\left(\frac{(m-\epsilon)(1-\epsilon)}{m-1}|\nabla|\nabla u||^2 + (1-\epsilon)\abs{\nabla u}^2|\mathring A|^2 \right) - C_{m,\epsilon}(w^2+H^2).
\end{align*}
Notice finally that
\begin{align*}
    \frac{(m-\epsilon)(1-\epsilon)}{m-1}\geq \frac{m}{m-1}+\frac{m+1}{m-1}\epsilon
\end{align*}
so up to renaming epsilon we obtained the first inequality.
The second is readily obtained by applying Cauchy and Young inequalities to \eqref{norma di seconda} and \eqref{espressione hessiana u}.
\end{proof}

Combining the last Lemma with \eqref{jacobi}, \eqref{eq_bounds_ric} and \eqref{eq_bounds_lie} we obtain the following key inequality:
\begin{align}\label{key inequality}
    \diver_g Y &\geq \frac{\alpha^2}{w^2}\left(\left(\frac{m}{m-1}-\epsilon\right)|\nabla|\nabla u||^2 + (1-\epsilon)|\nabla u|^2|\mathring A|^2\right) \\\nonumber &- g\left(\frac{\nabla w}{w},Y\right) - C_{m,\epsilon}(H^2+w^2).
\end{align}

\begin{prop}\label{stima di seconda}
    There exists a constant $C$ depending only on $\Sigma$ and $\tau$ such that, for any $u\in\Y\cap C^\infty(\Sigma)$, the estimate
    \begin{align}\label{stima di seconda integrale}
        \int_\Sigma \eta^2|\second|^2 \, dV_g \leq C\left(\int_\Sigma|\nabla\eta|^2 \, dV_g+\int_\Sigma\eta^2H^2 \, dV_g +\int_\Sigma \eta^2w^2 \, dV_g \right)
    \end{align}
    holds for any $\eta\in C_c^1(\mathring\Sigma)$.
\end{prop}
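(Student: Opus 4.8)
The plan is to integrate the key differential inequality \eqref{key inequality} against the nonnegative density $\eta^2$, perform an integration by parts, and then absorb the terms involving $\nabla w/w$ into the left-hand side, using that the refined Kato inequality \eqref{stima seconda da sotto} carries the coefficient $\tfrac{m}{m-1}>1$ in front of $|\nabla|\nabla u||^2$. I may assume $u$ is spacelike, so that $w$ is a bounded positive smooth function and the vector field $Y=w^{-1}(\nabla w+HT^{\top}+Z)$ of \eqref{jacobi} is smooth. Multiplying \eqref{key inequality} by $\eta^2$, integrating over $\Sigma$ and using the divergence theorem (there are no boundary contributions since $\supp\eta\Subset\mathring\Sigma$), one obtains
\begin{align*}
& \int_\Sigma \eta^2\frac{\alpha^2}{w^2}\Big[\big(\tfrac{m}{m-1}-\epsilon\big)|\nabla|\nabla u||^2+(1-\epsilon)|\nabla u|^2|\mathring A|^2\Big]\,dV_g \\
& \quad \le\ -2\int_\Sigma \eta\,g(\nabla\eta,Y)\,dV_g +\int_\Sigma \eta^2\,g\Big(\tfrac{\nabla w}{w},Y\Big)\,dV_g+C_{m,\epsilon}\int_\Sigma \eta^2(H^2+w^2)\,dV_g .
\end{align*}

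Next I would estimate $Y$. Since $|T^{\top}|=\alpha|\nabla u|=\sqrt{w^2-1}\le w$ by \eqref{eq_w_nabla_u}, and $|Z|_g\le Cw$ (the same bound on $\iota_N\lie_T\g$ that yields $|L|^2\le Cw^2$ in the proof of \eqref{stima seconda da sotto}, cf.\ \eqref{eq_bounds_lie}), one has $|Y|\le \tfrac{|\nabla w|}{w}+|H|+C$. I would then expand $g(\nabla w/w,Y)=\big|\tfrac{\nabla w}{w}\big|^2+w^{-2}g(\nabla w,HT^{\top}+Z)$, bound the cross term by $\tfrac{|\nabla w|}{w}(|H|+C)$, and apply Young's inequality both there and in $2|\eta||\nabla\eta||Y|\le\delta\,\eta^2|Y|^2+\delta^{-1}|\nabla\eta|^2$. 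Using \eqref{eq_nablaw_ealtro} to replace $\big|\tfrac{\nabla w}{w}\big|^2$ by $(1+\epsilon)\tfrac{\alpha^2}{w^2}|\nabla|\nabla u||^2+C_\epsilon w^2$, all the right-hand terms together produce, in front of $\int_\Sigma\eta^2\tfrac{\alpha^2}{w^2}|\nabla|\nabla u||^2\,dV_g$, a total coefficient that tends to $1$ as $\epsilon$ and the Young thresholds tend to $0$, plus a remainder controlled by $C\big(\int_\Sigma|\nabla\eta|^2+\int_\Sigma\eta^2H^2+\int_\Sigma\eta^2w^2\big)\,dV_g$. Since the good coefficient $\tfrac{m}{m-1}$ is strictly larger than $1$ for every $m\ge2$, choosing first $\epsilon$ and then $\delta$ small (depending only on $m=\dim\Sigma$, hence on $\Sigma$) makes the net coefficient positive, and after moving the bad terms to the left one is left with
\[
\int_\Sigma \eta^2\frac{\alpha^2}{w^2}|\nabla|\nabla u||^2\,dV_g+\int_\Sigma \eta^2\frac{\alpha^2|\nabla u|^2}{w^2}|\mathring A|^2\,dV_g \ \le\ C\Big(\int_\Sigma|\nabla\eta|^2\,dV_g+\int_\Sigma\eta^2H^2\,dV_g+\int_\Sigma\eta^2w^2\,dV_g\Big),
\]
with $C=C(\Sigma,\tau)$ (note the $|\mathring A|$ term of \eqref{stima seconda da sotto} is simply kept on the left).

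Finally I would insert this into the upper Kato bound \eqref{stima seconda da sopra}: since $|\second|^2\le C\big(\tfrac{\alpha^2}{w^2}|\nabla|\nabla u||^2+\tfrac{\alpha^2|\nabla u|^2}{w^2}|\mathring A|^2+H^2+w^2\big)$ a.e.\ on $\Sigma$ (with the stated conventions on $\{\nabla u=0\}$), multiplying by $\eta^2$ and integrating gives \eqref{stima di seconda integrale} at once. I expect the only genuinely delicate point to be the bookkeeping in the absorption step: one must be careful that the coefficient of $\int_\Sigma\eta^2\tfrac{\alpha^2}{w^2}|\nabla|\nabla u||^2\,dV_g$ accrued from all the $\nabla w$-terms stays strictly below the threshold $\tfrac{m}{m-1}$ supplied by \eqref{stima seconda da sotto}. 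This is exactly why the refined Kato inequality — rather than the naive one, whose constant $1$ would leave no room — is indispensable; and since $\tfrac{m}{m-1}>1$ for all $m\ge2$, this particular step needs no restriction on the dimension.
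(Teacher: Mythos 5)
Your proof is correct and follows essentially the same route as the paper's: integrate \eqref{key inequality} against $\eta^2$, absorb the $\nabla w/w$ contributions using \eqref{eq_nablaw_ealtro} and the margin $\tfrac{m}{m-1}-1=\tfrac{1}{m-1}>0$ supplied by the refined Kato inequality \eqref{stima seconda da sotto}, and conclude via the upper bound \eqref{stima seconda da sopra}. The only (harmless) slip is the claim $|Z|_g\le Cw$: by \eqref{eq_bounds_lie} the correct estimate is $|Z|_g\le Cw^2$, so the resulting bound is $|Y|\le |\nabla w|/w+|H|+Cw$ rather than $|Y|\le |\nabla w|/w+|H|+C$, but the extra term is still controlled by $\int_\Sigma\eta^2 w^2\,dV_g$ and the argument goes through unchanged.
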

\begin{proof}
    Integrating \eqref{key inequality} against a test function $\eta^2\in\lip_c(\Sigma,\sigma)$ we obtain
    \begin{align}\label{prima diseguaglianza}
        \int_\Sigma&\eta^2\frac{\alpha^2}{w^2}\left[\left(\frac{m}{m-1}-\epsilon\right)|\nabla|\nabla u||^2 +(1-\epsilon)|\nabla u|^2|A|^2\right] \, dV_g \leq \\ \nonumber \leq -2\int_\Sigma&\eta g(\nabla\eta,Y) \, dV_g + \int_\Sigma \eta^2 g\left(\frac{\nabla w}{w},Y\right) \, dV_g + C_{m,\epsilon}\int_\Sigma \eta^2(H^2+w^2) \, dV_g
    \end{align}
    The last integral is easily managed thanks to the energy estimate, Proposition \ref{stima di energia}.
    Next notice that, by \eqref{gradiente di w}, using $|\nabla\alpha|\leq Cw$ and $|\nabla u|=\frac{w}{\alpha}|Du|\leq \frac{w}{\alpha}$, we have
    \begin{align*}
        \left\vert\frac{\nabla w}{w}\right\vert^2
        &\leq (1+\epsilon)\frac{w^2}{\alpha^2}|\nabla|\nabla u||^2+ C_\epsilon w^2.
    \end{align*}
    On the other hand, recalling $|Z|=|\iota_N \lie_T\g|\leq Cw^2$ and \eqref{tangente di T}, we have
    \begin{align*}
        \left\vert\frac{HT^{\top}+Z}{w}\right\vert\leq C(|H|+w)
    \end{align*}
    and thus
    \begin{align*}
        g\left(\frac{\nabla w}{w},Y\right) &= \left\vert\frac{\nabla w}{w}\right\vert^2 + g\left(\frac{\nabla w}{w},\frac{HT^{\top}+Z}{w}\right) \leq (1+\epsilon)\left\vert\frac{\nabla w}{w}\right\vert^2 + C_\epsilon \left\vert\frac{HT^{\top}+Z}{w}\right\vert^2 \\
        &\leq (1+\epsilon)^2\frac{w^2}{\alpha^2}|\nabla|\nabla u||^2 + C_\epsilon(H^2+w^2).
    \end{align*} 
    Inserting this into \eqref{prima diseguaglianza} and using again Proposition \ref{stima di energia} we obtain a positive $C_\epsilon$ such that
    \begin{align*}
        \int_\Sigma \eta^2\frac{\alpha^2}{w^2}\left[\left(\frac{1}{m-1}-3\epsilon-\epsilon^2\right)|\nabla|\nabla u||^2+(1-\epsilon)|\nabla u|^2 |A|^2 + H^2\right] \, dV_g \leq \\
        \leq C_\epsilon \left(\int_\Sigma \eta^2 H^2 \, dV_g + \int_\Sigma |\nabla\eta|^2 \, dV_g + \int_\Sigma
         \eta^2 w^2 \, dV_g \right)
    \end{align*}
    which is the thesis once one recalls \eqref{stima seconda da sopra} and chooses $\epsilon$ small enough.
\end{proof}

\begin{cor}\label{Dlnw e D^2u}
    Assume that on some $\Sigma'\Subset \mathring\Sigma$ it holds
    \begin{align*}
        \norm{\rho}_{L^1(\Sigma)}\leq \mathcal{I}_1 \qquad \norm{\rho}_{L^2(\Sigma')}\leq \mathcal{I}_2.
    \end{align*}
    Then for every $\epsilon>0$ there exists a constant $C$ depending on $\epsilon$, $X$, $\Sigma'$ (and on $\Sigma$ and $\tau$) such that, for any smooth solution $u\in\Y\cap C^\infty(\Sigma)$ to \eqref{eq_SP},
    \begin{gather*}
        \int_{\Sigma'_\epsilon}\left(w_u|D^2 u|^2 + w_u^3|D^2u(Du,\cdot)|^2+w_u^5 D^2u(Du,Du)^2\right) \, dV_{\sigma_u} \leq C\left(\mathcal{I}_2^2 + \mathcal{I}_1 + 1 \right) \\
        \int_{\Sigma'_\epsilon}|D\ln w_u| \, dV_{\sigma_u} \leq C\left(\mathcal{I}_2^2 + \mathcal{I}_1 + 1 \right)
    \end{gather*}
    where $\Sigma'_\epsilon\doteq\{x\in \Sigma \ | \ \di_{\sigma_u}(x,\partial\Sigma')>\epsilon\}$ and $D$ and $|\cdot|$ are taken with rescpect to $\sigma = \sigma_u$.
\end{cor}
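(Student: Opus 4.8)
The plan is to read the integral bound \eqref{stima di seconda integrale} on $\int_\Sigma\eta^2|\second|^2\,dV_g$ back in terms of $D^2u$ and $\sigma=\sigma_u$. Fix a cutoff $\eta\in C_c^1(\mathring\Sigma)$ with $\eta\equiv1$ on a $\sigma_\varphi$-neighbourhood of $\Sigma'_\epsilon$ (which, by \eqref{C_2}--\eqref{C_3}, contains $\Sigma'_\epsilon$ for every $u\in\Y$), $\supp\eta\Subset\Sigma'$, $0\le\eta\le1$, $|D\eta|\le C_\epsilon$. First I would bound the right-hand side of \eqref{stima di seconda integrale}. Writing $dV_g=w^{-1}\alpha^m\,dV_\sigma$ as in \eqref{volumi}, using $w\ge1$, $|Du|\le1$, $C^{-1}\le\alpha\le C$, and $|\nabla\eta|_g^2=\alpha^{-2}\bigl(|D\eta|^2+w^2\sigma(D\eta,Du)^2\bigr)\le C\alpha^{-2}|D\eta|^2(1+w^2)$, one gets $|\nabla\eta|_g^2\,dV_g\le C_\epsilon(1+w)\,dV_\sigma$. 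For the other two terms, $H=\rho+\g(X,N_u)$ with $|\g(X,N_u)|\le Cw$ by \eqref{C_9}, so $\eta^2H^2\,dV_g\le C(\eta^2\rho^2w^{-1}+\eta^2w)\,dV_\sigma$ and $\eta^2w^2\,dV_g=\eta^2w\alpha^m\,dV_\sigma$. Using the energy estimate (Proposition \ref{stima di energia}), which bounds $\int_\Sigma w\,dV_\sigma$ by $C_X(1+\|\rho\|_{L^1(\Sigma)})\le C_X(1+\mathcal I_1)$, together with $\int_{\Sigma'}\rho^2\,dV_\sigma\le\mathcal I_2^2$, this yields
\[
\int_\Sigma\eta^2|\second|^2\,dV_g\le C(\mathcal I_2^2+\mathcal I_1+1),\qquad C=C(\epsilon,X,\Sigma',\Sigma,\tau).
\]

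Second, I would convert this into a bound on the $g$-Hessian. From the pointwise identity \eqref{norma di seconda}, the bounds $|F^*\lie_T\g|_g^2\le Cw^2$ (from \eqref{eq_bounds_lie}) and $|\nabla\alpha|_g\le Cw$, a Cauchy--Young absorption of the two terms of \eqref{norma di seconda} containing $\nabla^2u$ gives the pointwise inequality $\tfrac{\alpha^2}{w^2}|\nabla^2u|_g^2\le C(|\second|^2+w^2)$ (this is \eqref{prima stima da sotto} rearranged). Combined with the previous display and $\int\eta^2w^2\,dV_g=\int\eta^2w\alpha^m\,dV_\sigma\le C(1+\mathcal I_1)$, we obtain $\int_{\Sigma'_\epsilon}\tfrac{\alpha^2}{w^2}|\nabla^2u|_g^2\,dV_g\le C(\mathcal I_2^2+\mathcal I_1+1)$. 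Now I would unfold the integrand through $\sigma$: by \eqref{eq_relation_hessians}, $\nabla^2u=w^2D^2u+E$ with $E=w^2\sigma(Du,D\ln\alpha)(\sigma-du^2)-2\,d\ln\alpha\odot du$; since $\sigma-du^2=\alpha^{-2}g$ one has $|\sigma-du^2|_g\le C$, while $|d\ln\alpha|_g\le Cw$ and $|du|_g\le\alpha^{-1}w$, so $|E|_g\le Cw^2$ and therefore $|\nabla^2u|_g^2\ge\tfrac12 w^4|D^2u|_g^2-Cw^4$. Expanding $|D^2u|_g^2=g^{ik}g^{jl}(D^2u)_{ij}(D^2u)_{kl}$ with $g^{ij}=\alpha^{-2}(\sigma^{ij}+w^2u^iu^j)$ gives exactly
\[
|D^2u|_g^2=\alpha^{-4}\Bigl(|D^2u|_\sigma^2+2w^2|D^2u(Du,\cdot)|_\sigma^2+w^4\,D^2u(Du,Du)^2\Bigr),
\]
so that, after multiplying by $\tfrac{\alpha^2}{w^2}$ and by $dV_g=w^{-1}\alpha^m\,dV_\sigma$ and using that $\alpha^{m-2}$ is bounded above and below, $\tfrac{\alpha^2}{w^2}|\nabla^2u|_g^2\,dV_g\ge c\bigl(w|D^2u|^2+w^3|D^2u(Du,\cdot)|^2+w^5D^2u(Du,Du)^2\bigr)\,dV_\sigma-Cw\,dV_\sigma$. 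Integrating over $\Sigma'_\epsilon$ and absorbing $\int_{\Sigma'_\epsilon}w\,dV_\sigma\le C(1+\mathcal I_1)$ via Proposition \ref{stima di energia} gives the first claimed inequality.

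Finally, for the $|D\ln w_u|$ estimate I would use that, differentiating $\ln w=-\tfrac12\ln(1-|Du|_\sigma^2)$ and using $D\sigma=0$, one has $D\ln w=w^2\,D^2u(Du,\cdot)$, hence $|D\ln w|_\sigma=w^2|D^2u(Du,\cdot)|_\sigma$. By Young's inequality $w^2|D^2u(Du,\cdot)|\le\tfrac12\bigl(w+w^3|D^2u(Du,\cdot)|^2\bigr)$, so integrating over $\Sigma'_\epsilon$ bounds $\int_{\Sigma'_\epsilon}|D\ln w|\,dV_\sigma$ by $\tfrac12\|w\|_{L^1(\Sigma)}$ plus the first estimate, i.e. by $C(\mathcal I_2^2+\mathcal I_1+1)$.

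The main obstacle is the unfolding step: one must check that the anisotropic combination in the statement is, up to bounded factors and the (energy-integrable) error $w\,dV_\sigma$, precisely $\tfrac{\alpha^2}{w^2}|\nabla^2u|_g^2\,dV_g$. This needs the careful expansion of $g^{ik}g^{jl}$ via $g^{ij}=\alpha^{-2}(\sigma^{ij}+w^2u^iu^j)$, and a control \emph{uniform in $u\in\Y$} of the lower-order terms of \eqref{eq_relation_hessians} in the $g$-metric, i.e. of $|d\ln\alpha|_g$ and $|\sigma-du^2|_g$: the former is uniform because $\alpha=F_u^*\balpha$ with $|u_i|\le C$ on $\Sigma$ (Lemma \ref{conseguenze di C}), and the latter follows from $\sigma-du^2=\alpha^{-2}g$. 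Everything else is routine once the energy estimate and \eqref{stima di seconda integrale} are in hand.
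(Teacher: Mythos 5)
Your proof is correct and follows essentially the same route as the paper's: bound $\int\eta^2|\second|^2\,dV_g$ via Proposition \ref{stima di seconda} together with the energy estimate and the $L^2$ bound on $\rho$, pass to $w^2|D^2u|_g^2$ through \eqref{prima stima da sotto} and \eqref{eq_relation_hessians}, and expand $|D^2u|_g^2$ with $g^{ij}=\alpha^{-2}(\sigma^{ij}+w^2u^iu^j)$ to produce the anisotropic combination; the identity $D\ln w=w^2D^2u(Du,\cdot)$ then gives the gradient bound. The only (immaterial) differences are the order of the two reductions and your use of Young's inequality where the paper uses Cauchy--Schwarz in the final step.
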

\begin{proof}
    By applying Young inequality to \eqref{eq_relation_hessians} we have
    \begin{align*}
        |\nabla^2 u|_g^2 \geq (1-\epsilon) w^4|D^2 u|_g^2 - C_\epsilon\left(w^4\sigma(Du,D\ln\alpha)^2|\sigma-du^2|_g^2 +|2d\ln\alpha\odot du|^2_g\right)
    \end{align*}
    and since
    \begin{align*}
        |\sigma-du^2|_g^2 &= \frac{m}{\alpha^4}\leq C \\
        |2d\ln\alpha\odot du|^2_g &= \frac{w^4}{2\alpha^2}\left(|D\ln\alpha|^2|Du|^2 + \sigma(D\ln\alpha,Du)^2\right) \leq Cw^4
    \end{align*}
    we have
    \begin{align*}
        |\nabla^2 u|^2_g \geq (1-\epsilon)w^4|D^2u|_g^2 - C_\epsilon w^4
    \end{align*}
    and using \eqref{prima stima da sotto} we obtain
    \begin{align*}
        |\second|^2_g &\geq (1-\epsilon)\frac{\alpha^2}{w^2}|\nabla^2u|_g^2 - C_\epsilon w^2 \geq (1-\epsilon)^2 \alpha^2w^2|D^2u|_g^2 - (1-\epsilon)C_\epsilon \alpha^2w^2 - C_\epsilon w^2 \\ &\geq C_\epsilon w^2\left(|D^2u|^2_g - 1\right).
    \end{align*}
    Now choosing $\eta\in C^\infty_c(\Sigma'_{\epsilon})$ in \eqref{stima di seconda integrale} with $\eta=1$ on $\Sigma'_\epsilon$ and $|D\eta|\leq \frac{2}{\epsilon}$, applying Proposition \ref{stima di seconda} we have
    \begin{align*}
        \int_\Sigma \eta^2w^2|D^2u|^2_g \, dV_g &\leq C_\epsilon\left(\int_\Sigma \eta^2|\second|^2_g \, dV_g + \int_\Sigma\eta^2 w^2 \, dV_g\right) \\ & \leq C_\epsilon\left(\int_\Sigma \eta^2 H^2 \, dV_g + \int_\Sigma |\nabla\eta|^2_g \, dV_g + \int_\Sigma\eta^2 w^2 \, dV_g \right)
    \end{align*}
now since $H_u = \rho + \g(X,N)$ with $|\g(X,N)|\leq C_X w$, using $|\nabla\eta|\leq w\alpha^{-1}|D\eta|$, \eqref{volumi} and Proposition \ref{stima di energia} we have
    \begin{align*}
        \int_{\Sigma'_\epsilon} w|D^2u|^2_g \, dV_\sigma &\leq  C_{X,\epsilon}\left(\int_\Sigma \eta^2\rho^2 \, dV_g + \frac{2}{\epsilon}\int_\Sigma w^2 \, dV_g + \int_\Sigma \eta^2 w^2 \, dV_g  \right) \\ &\leq C_{X,\epsilon} \left(\norm{\rho}_{L^2(\Sigma')}^2 + \norm{\rho}_{L^1(\Sigma)} + 1 \right).
    \end{align*}
    Finally observe that, since $dw = w^3 D^2u(Du,\cdot)$,
    \begin{align*}
        w|D^2u|^2_g  &= \alpha^{-4}\left(w|D^2 u|^2 + 2w^3|D^2u(Du,\cdot)|^2+w^5 D^2u(Du,Du)^2\right) 
        \\ &\geq C \left(w|D^2 u|^2 + w^{-1}|D\ln w|^2\right)
    \end{align*}
    To conclude simply observe that the Cauchy inequality gives
    \begin{align*}
        \int_{\Sigma'_\epsilon} |D\ln w| \, dV_\sigma \leq \left(\int_{\Sigma'_\epsilon} w \, dV_\sigma\right)^{\frac{1}{2}}\left(\int_{\Sigma'_\epsilon} w^{-1}|D\ln w|^2 \, dV_\sigma \right)^{\frac{1}{2}}
    \end{align*}
    and use once again Proposition \ref{stima di energia}.
\end{proof}

\subsection{Higher integrability}

In this section we show that, in regions where  $\rho$ is $L^2$, the tilt function $w$ enjoys better integrability properties, namely, that the integral of $w \ln (w + 1)$ is bounded by a uniform constant. We stress that the proof only works in ambient dimension $3$. 

\begin{prop}\label{higher integrability}
    Assume $m=2$ and that on some $\Sigma'\subseteq \Sigma$ it holds
    \begin{align*}
        \norm{\rho}_{L^1(\Sigma)}\leq \mathcal{I}_1 \qquad \norm{\rho}_{L^2(\Sigma')}\leq \mathcal{I}_2.
    \end{align*}
    Then for every $\epsilon>0$ there exists a constant depending on $\epsilon$, $X$, $\Sigma'$, $\mathcal{I}_1$, $\mathcal{I}_2$ (and on $\Sigma$ and $\tau$) such that, for any smooth solution $u\in\Y\cap C^\infty(\Sigma)$ to \eqref{eq_SP},
    \begin{align*}
        \int_{\Sigma_\epsilon'} w_u \ln (w_u + 1) \, dV_{\sigma_u} \leq C(\epsilon,X,\Sigma',\mathcal{I}_1,\mathcal{I}_2)
    \end{align*}
    where $\Sigma'_\epsilon\doteq\{x\in \Sigma' \ | \ \di_{\sigma_u}(x,\partial\Sigma')>\epsilon\}$.
\end{prop}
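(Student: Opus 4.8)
The plan is to exploit the second fundamental form estimate of Proposition~\ref{stima di seconda}, or rather its consequence in Corollary~\ref{Dlnw e D^2u}, together with the one-dimensionality of the level sets when $m=2$, via a Sobolev/coarea argument. First I would fix $\eta\in C_c^\infty(\Sigma')$ with $\eta\equiv 1$ on $\Sigma'_\epsilon$ and $|D\eta|\le 2/\epsilon$, and observe that Corollary~\ref{Dlnw e D^2u} gives $\int_{\Sigma'}\eta^2 w^{-1}|D\ln w|^2\,dV_\sigma\le C(\mathcal{I}_2^2+\mathcal{I}_1+1)$, while the energy estimate of Proposition~\ref{stima di energia} gives $\int_\Sigma w\,dV_\sigma\le C(1+\mathcal{I}_1)$. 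The function $v\doteq\ln(w+1)$ then satisfies $|Dv|=\frac{|Dw|}{w+1}\le|D\ln w|$, so $\int_{\Sigma'}\eta^2 w^{-1}|Dv|^2\,dV_\sigma$ is controlled; the target quantity is $\int_{\Sigma'_\epsilon}w\,v\,dV_\sigma$.

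The heart of the matter is a weighted one-dimensional trick available only when $m=2$: the level sets $\{u=t\}$ of $u$ are (generically) curves in the surface $\Sigma$, and on such a curve the coarea formula reads $dV_g = |\nabla u|_g^{-1}\,d\mathscr{H}^1_g\,dt$ where the inner integral runs over a compact $1$-manifold. The idea is to write, for a fixed level $t$, something like $w(x)\le w(y)+\int_{\text{arc from }y\text{ to }x}|Dv|\,w\cdot(\text{length element})$ along a level curve, estimate $v=\ln(w+1)$ by its average plus an oscillation term bounded via $\int_{\{u=t\}}|Dv|$, and then integrate in $t$. Concretely, one wants to bound $\int_{\Sigma'_\epsilon} w\ln(w+1)\,dV_\sigma$ by splitting $\Sigma'_\epsilon$ into $\{|Du|\le 1/2\}$, where $w\le\sqrt{4/3}$ and the contribution is trivially bounded by $C|\Sigma'|$, and its complement $\{|Du|>1/2\}$, where $|\nabla u|_g=\frac{w}{\alpha}|Du|\ge cw$ and so $dV_g=w^{-1}\alpha^m\,dV_\sigma$ combines with the coarea weight favorably. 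On $\{|Du|>1/2\}$ I would use $w\,dV_\sigma\le C\,w^{-1}\alpha^{-m}\cdot w^2\,dV_\sigma$ — no, more carefully: $\int_{\{|Du|>1/2\}}w\ln(w+1)\,dV_\sigma = \int_{\{|Du|>1/2\}}\alpha^m\ln(w+1)\,dV_g$, and then apply the coarea formula in the metric $g$ to write this as $\int_t\Big(\int_{\{u=t\}}\alpha^m\ln(w+1)\,|\nabla u|_g^{-1}\,d\mathscr{H}^1_g\Big)dt$, with $|\nabla u|_g^{-1}\le C w^{-1}$ on the set where $|Du|>1/2$.

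On each level curve $\{u=t\}\cap\{|Du|>1/2\}$, which is a finite union of arcs of total $g$-length bounded by $C\,\mathscr{H}^1_g(\{u=t\})\le C\,(\text{perimeter-type quantity})$, I would estimate $\ln(w+1)$ pointwise by $\inf_{\{u=t\}}\ln(w+1) + \int_{\{u=t\}}|D^T\ln(w+1)|\,d\mathscr{H}^1_g$ where $D^T$ is the tangential derivative, using the fundamental theorem of calculus along the arc; the infimum over $\Sigma'$ is controlled since $w\in L^1$ forces $\{u=t\}$ to meet a region where $w$ is of moderate size for a.e.\ $t$ (or one can instead use the mean value and Poincaré on the arc). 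Integrating the oscillation term in $t$ and switching back via coarea produces $\int_{\Sigma'}|D\ln(w+1)|\,w^{-1}\cdot(\text{bounded weight})\,dV_g$ up to factors of $\alpha$ and $|\nabla u|_g$; a Cauchy–Schwarz split $\int |Dv|\cdot w^{-1/2}\cdot w^{1/2}$ against $\big(\int \eta^2 w^{-1}|Dv|^2\big)^{1/2}\big(\int w\big)^{1/2}$ then closes the estimate using Corollary~\ref{Dlnw e D^2u} and Proposition~\ref{stima di energia}. The main obstacle I anticipate is making the level-set argument rigorous: the level sets need not be smooth for every $t$ (Sard only gives a.e.\ regular value, which suffices) and, more delicately, one must control the number and $\mathscr{H}^1$-measure of the arcs of $\{u=t\}$ inside $\Sigma'$ uniformly — this is exactly where $m=2$ is essential, since in higher dimensions the level sets are hypersurfaces whose intrinsic geometry is not controlled by the ambient estimates, and it is presumably here that one invokes that $\int_{\Sigma'}|D\ln w|\,dV_\sigma$ and the second-fundamental-form bound on the \emph{level sets} (the $|\mathring A|^2$ term appearing in \eqref{stima seconda da sotto}) together pin down the $1$-dimensional geometry. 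I would also need to be careful that the infimum/average of $w$ on a level curve is genuinely controlled — most cleanly by integrating $\int_t\inf_{\{u=t\}}w\,dt\le C$, which follows from $\int w\,dV_g<\infty$ plus coarea — so that the constant depends on $\mathcal{I}_1,\mathcal{I}_2$ as claimed.
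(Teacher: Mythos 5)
Your proposal correctly identifies one of the two essential ingredients — that $\ln(w_u+1)$ is controlled in $W^{1,1}$ near $\Sigma'$ via Corollary~\ref{Dlnw e D^2u} and Proposition~\ref{stima di energia} — but it misses the other, and the level-set argument you substitute for it does not close.

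The paper's proof bounds $\int_{\Sigma'_\epsilon} w\ln(w+1)\,dV_\sigma = \int f\,d\mu$ with $f=\eta\ln(w+1)$ and $\mu = w\,dV\llcorner\Sigma'_{\epsilon/2}$ by Maz'ya's trace inequality
\[
\int_{\R^m} h\,d\nu \;\le\; c_m \sup_{x,\,r>0}\frac{\nu(B_r(x))}{r^{m-1}}\int_{\R^m}|\tilde Dh|\,dx ,
\]
applied in charts. The quantity $\sup_{x,r}\mu(B_r(x))/r^{m-1}$ is finite precisely because of Proposition~\ref{crescita sulle bolle}, which gives the \emph{linear} ball growth $\int_{B_r}w\,dV \le Cr$ in every dimension; for $m=2$ the exponent $m-1=1$ matches this linear growth, and this is exactly where the dimension restriction enters. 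Your proposal never invokes Proposition~\ref{crescita sulle bolle}, and this uniform-in-scale control is the missing idea: without it, the $W^{1,1}$ bound on $\ln(w+1)$ alone cannot be converted into an $L^1(w\,dV)$ bound.

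Your replacement — coarea slicing along level sets of $u$ — runs into a concrete obstruction. After slicing, the oscillation term you produce has the form
\[
\int_{\R}\Bigl(\int_{\{u=t\}}|D^{T}\ln(w+1)|\,d\mathscr{H}^1\Bigr)\Bigl(\int_{\{u=t\}}w\,d\mathscr{H}^1\Bigr)dt .
\]
Coarea controls $\int_t\int_{\{u=t\}}|D\ln(w+1)|\,dt$ and $\int_t\int_{\{u=t\}}w\,dt$ separately, but the integral of the \emph{product} requires one of the two factors to be bounded uniformly in $t$, and no such uniform bound is available from the estimates at hand (the same issue defeats the variant using $\inf_{\{u=t\}}\ln(w+1)$ or the level-curve average). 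There is also a computational slip: since $dV_g = w^{-1}\alpha^m\,dV_\sigma$, one has $w\,dV_\sigma = w^2\alpha^{-m}\,dV_g$, not $\alpha^m\,dV_g$, so the coarea rewriting does not lower the power of $w$ as you hoped. The correct exploitation of $m=2$ is through balls (via Proposition~\ref{crescita sulle bolle} and the trace inequality), not through the level sets of $u$.
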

\begin{proof}
    Take a test function $\eta \in C^1_c(\Sigma')$ such that
    \begin{align*}
        \eta \equiv 1 \quad \text{on $\Sigma_\epsilon'$,} \qquad \operatorname{spt} \eta \subseteq \Sigma_{\epsilon/2}'.
    \end{align*}
    Set
    \begin{align*}
        f = \eta \ln (w_u +1)\in C^\infty_c(\Sigma_{\epsilon/2}'), \qquad \mu = w_u \, dV_{u}\llcorner\Sigma_{\epsilon/2}',
    \end{align*}
    so that 
    \begin{align*}
        \int_{\Sigma_\epsilon'} w_u\ln(w_u + 1) \, dV_{\sigma_u} \leq \int_\Sigma \eta w_u\ln(w_u + 1) \, dV_{\sigma_u} = \int_{\Sigma} f \, d\mu.
    \end{align*}
    Since $\overline{\Sigma_{\epsilon/2}'}$ is compact, we can cover it with finitely many, say $N$, charts $\psi_i:U_i\to \R^m$ with $\psi_i(U_i) = \R^m$ and $U = \bigcup_i U_i\Subset \Sigma'_{\epsilon/4}$. Pick a partition of unit $\{\zeta_i\}$ subordinated to the covering $\{U_i\}$, that is
    \begin{align*}
        \zeta_i \in C_c^\infty(U_i), \qquad 0\leq \zeta_i \leq 1, \qquad \operatorname{spt}\zeta_i \subseteq U_i, \qquad \sum_{i=1}^N \zeta_i = 1 \quad \text{on $\Sigma_{\epsilon/2}'$},
    \end{align*}
    and let
    \begin{align*}
        \tilde f_i = f \circ \psi_i^{-1} \in C^\infty(\R^m), \qquad \tilde\zeta_i = \zeta \circ \psi_i^{-1} \in C^\infty_c(\R^m),
    \end{align*}
    so that $\tilde f_i \tilde \zeta_i \in C^1_c(\R^m)$. Now remember that the following trace inequality holds on $\R^m$ for any radon measure $\nu$ and any positive function $h\in C^1_c(\R^m)$, see \cite[Corollary 1.1.2]{maz2008theory}:
    \begin{align*}
        \int_{\R^m} h \, d\nu \leq c_m \sup_{\substack{r>0 \\ x\in\R^m}} \frac{\nu(B_r^\delta(x))}{r^{m-1}} \int_{\R^m} |\tilde D h| \, dx
    \end{align*}
    where $\tilde D$ is the Euclidean gradient and $B^\delta_r$ is an Euclidean ball. Applying this inequality to each measure $\mu_i = (\psi_i)_*(\mu\llcorner \operatorname{spt}\zeta_i)$ we have
    \begin{align*}
        \int_\Sigma f \, d\mu &= \sum_{i=1}^N \int_{\psi_i^{-1}(\R^m)}  \zeta_i f_i \, d\mu = \sum_{i=1}^N \int_{\R^m} \tilde\zeta_i \tilde f_i \, d\mu_i \\
        &= c_m\sum_{i=1}^N \sup_{\substack{r>0 \\ x\in\R^m}} \frac{\mu_i(B_r^\delta(x))}{r^{m-1}} \int_{\R^m} |\tilde D(\tilde \zeta_i \tilde f_i)| \, dx \\
        &\leq c_m \sum_{i=1}^N \sup_{\substack{r>0 \\ x\in\R^m}} \frac{\mu_i(B_r^\delta(x))}{r^{m-1}} \left( \int_{\R^m} |\tilde D\tilde\zeta_i|\tilde f_i \, dx + \int_{\R^m} \tilde\zeta_i |\tilde D \tilde f_i| \, dx \right)
    \end{align*}
    Now observe that $\delta$ and $(\psi_i^{-1})^*\sigma_u$ control each other as bilinear forms on the support of $\tilde\zeta_i$, hence for each $i$ there exists a constant $c_i$ such that
    \begin{gather*}
        \int_{\R^m} |\tilde D \tilde\zeta_i||\tilde f_i| \, dx \leq c_i \int_{U_i} |D\zeta_i||f| \, dV_{\sigma_u},  \qquad \int_{\R^m} \tilde\zeta_i |\tilde D \tilde f_i| \, dx \leq c_i \int_{U_i} \zeta_i |D f| \, dV_{\sigma_u}, \\
        \psi^{-1}_i(B_r^\delta (x)\cap \operatorname{spt}\tilde\zeta_i) \subseteq B_{c_i r}^{\sigma_u} (\psi_i^{-1}(x))\cap\operatorname{spt}\zeta_i, \qquad \forall x\in\R^m, \forall 
        r>0
    \end{gather*}
    thus, recalling that $f$ is supported in $\Sigma_{\epsilon/2}'$, we have
    \begin{align*}
        \int_{\Sigma} f \, d\mu &\leq c_m\sum_{i=1}^N \sup_{\substack{r>0 \\ x\in\R^m}}\frac{\mu_i(B_r^\delta(x))}{r^{m-1}} c_i\left( \int_{U_i} |D\zeta_i|f \, dV_{\sigma_u} + \int_{U_i} \zeta_i |D f| \, dV_{\sigma_u}  \right) \\ &\leq c_m \sum_{i=1}^N \sup_{\substack{r>0 \\ x\in\R^m}}\frac{\mu_i(B_r^\delta(x))}{r^{m-1}} c_i\left(\norm{D\zeta_i}_{L^\infty(\Sigma)}\int_{\Sigma_{\epsilon/2} } f \, dV_{\sigma_u} + \int_{\Sigma_{\epsilon/2}} |D f| \, dV_{\sigma_u} \right) 
        \\ &\leq c_{m,\zeta}\norm{f}_{W^{1,1}(\Sigma_{\epsilon/2},\sigma_u)} \sum_{i=1}^N \sup_{\substack{r>0 \\ x\in\R^m}} \frac{\mu_i(B_r^\delta(x))}{r^{m-1}}, 
    \end{align*}
    where $c_{m,\zeta} = Nc_m \left( 1+ \max_i c_i\norm{D\zeta_i}_{L^\infty(\Sigma)}\right)$ depends on the chosen partition of unity. Note that 
    \begin{align*}
        \mu_i(B_r^\delta(x)) &= \mu(\psi_i^{-1}(B_r^\delta(x)\cap\operatorname{spt}\tilde\zeta_i)) \leq \mu \big(B_{c_ir}^{\sigma_u}(\psi_i^{-1}(x)) \cap \operatorname{spt}\zeta_i \big) \\
        &\leq \mu(B_{cr}^{\sigma_u}(\psi_i^{-1}(x))) = \int_{B_{cr}^{\sigma_u}(\psi_i^{-1}(x))\cap \Sigma_{\epsilon/2}'} w_u \, dV_{\sigma_u}
    \end{align*}
    where $c = \max_i c_i$. Now, recalling that $U\subseteq \Sigma_{\epsilon/4}'$ if $r<r_0 \doteq\frac{\epsilon}{16c}$ then for any $i$ and $x\in \R^m$, $B_{cr}^{\sigma_u}(\psi^{-1}(x))\Subset \Sigma$, so by combining Proposition \ref{crescita sulle bolle}, the energy estimate in Proposition \ref{stima di energia} and using $m = 2$, for any $x\in\R^m$ and $r < r_0$ we have 
    \begin{align*}
        \frac{\mu_i(B_r^\delta(x))}{r^{m-1}} &\leq \frac{1}{r}\int_{B_{cr}^{\sigma_u}(\psi_i^{-1}(x))} w_u \, dV_{\sigma_u} \\ &\leq cC_X  \left( \frac{1}{r_0}\int_{B_{r_0}^{\sigma_u}(\psi_i^{-1}(x))} w_u \, dV_{\sigma_u} + \int_{B_{r_0}^{\sigma_u}(\psi_i^{-1}(x))} |\rho| \, dV_{\sigma_u} + r_0 \right) \\
        &\leq C(\epsilon, \zeta, X, \mathcal{I}_1).
    \end{align*}
    On the other hand, if $r \geq r_0$ then, again by Proposition \ref{stima di energia},
    \begin{align*}
        \frac{\mu_i(B_r^\delta(x))}{r^{m-1}} \leq \frac{1}{r_0^{m-1}}\int_{\Sigma_\epsilon'} w_u \, dV_{\sigma_u} \leq \frac{16c}{\epsilon}C_X(1 + \mathcal{I}_1).
    \end{align*}
    We have establish
    \begin{align*}
        \sup_{\substack{r>0 \\ x\in \R^m}}\frac{\mu_i(B_\delta(x))}{r} \leq C(\epsilon,\zeta,X,\mathcal{I}_1) \qquad \forall i = 1,\dots N
    \end{align*}
    hence
    \begin{align*}
        \int_{\Sigma_\epsilon'} w_u \ln(w_u + 1) \, dV_{\sigma_u} \leq NC(m,\epsilon,\zeta, X, \mathcal{I}_1) \norm{f}_{W^{1,1}(\Sigma_{\epsilon/2}',\sigma_u)}.
    \end{align*}
    The $W^{1,1}$ norm is finally estimated by Corollary \ref{Dlnw e D^2u} and Proposition \ref{stima di energia}:
    \begin{align*}
        \int_{\Sigma_{\epsilon/2}'} (f + |Df|) \, dV_{\sigma_u} &\leq \int_{\Sigma'_{\epsilon/2}} (\eta \ln(w_u + 1) + \ln(w_u + 1)|D\eta| + \eta|D\ln w_u|) \, dV_{\sigma_u}  \\
        &\leq \norm{D\eta}_{L^\infty(\Sigma)} \int_{\Sigma_{\epsilon/2}'} \left( w_u + 1 + |D\ln w_u| \right) \, dV_{\sigma_u} \\
        &\leq \norm{D\eta}_{L^\infty(\Sigma)} C(\epsilon,X, \Sigma',\mathcal{I}_1,\mathcal{I}_2)
    \end{align*}
    and the Proposition is proved.
\end{proof}

\section{Proof of the Theorem \ref{main result}}\label{sect_proof}

\stoptoc

We consider the sequence of pairs $(\rho_j,X_j)$ guaranteed by Proposition \ref{prop_appendix_sec}.


\subsection*{Step 1: Existence of approximating solutions} \textit{For each $j\in\N$ there exists a smooth classical solution $u_j$ to the Dirichlet problem}
\begin{align}\tag{${\rm PMC}_j$}
    \begin{cases}
        H_{u_j} = \frac{d\rho_j(u_j)}{dV_{\sigma_{u_j}}} + \g(X_j,N_{u_j}) \\
        u_j\in \Y.
    \end{cases}
\end{align}

\begin{proof}
The proof relies on a fixed-point argument due to Bartnik, and we include full details for the  reader's convenience. First, recall that, by \eqref{eq_mean_curvature_sigma_ij} the mean curvature is
\begin{align*}
    H_u = \alpha_u w_u g_u^{ij} u_{ij} + w_u B^k(x,u,du) u_k + w_u^{-1}H_u^S(x,u).
\end{align*}
Hereafter, as usual, all Sobolev and H\"older spaces are considered with respect to the metric $\sigma_\varphi$. Freezing the coefficients at some spacelike $v\in \Y\cap C^{1,\alpha}(\Sigma)$, we define the \emph{linear} operator
\begin{gather*}
    L_v \ \ : \quad \Y\cap C^{2,\alpha}(\Sigma)\to C^{0,\alpha}(\Sigma) \\
    L_v u \doteq w_v\alpha_vg_v^{ij} u_{ij}  + w_v B^k(x,v,dv)u_k.
\end{gather*}
The coefficients of $L_v$ are Hölder-continuous, and the operator is uniformly elliptic since $w_v \in L^\infty$. Recall also that, by the construction of $\rho_j$, since $v$ is a $C^{1,\alpha}(\Sigma)$ function, so is the density $\tfrac{d\rho_j(v)}{dV_{\sigma_v}}$, in particular it is $L^2(\Sigma)$. Hence, by standard Fredholm's theory \cite[Theorem 6.4]{evans} the existence and uniqueness of $u \in C^{2,\alpha}(\Sigma)$ solving
\begin{align}\label{eq_freezed_dirichlet}
    \begin{cases}
        L_v u = f_v & \quad \text{in } \, \Sigma,\\
        u=\varphi & \quad \text{on } \, \partial\Sigma,
    \end{cases}
    \qquad f_v = \frac{d\rho_j(v)}{dV_{\sigma_v}} + \g(X_j, N_{v}) - w^{-1}_vH^S(x,v) \in C^{0,\alpha}(\Sigma)
\end{align}
is guaranteed provided that the homogeneous problem
\begin{align*}
    \begin{cases}
        L_v u = 0 & \quad \text{in $\Sigma$} \\
        u = 0 & \quad \text{on $\partial\Sigma$}
    \end{cases}
\end{align*}
only admits the trivial solution. This follows from the comparison principle, since $L_v$ is a uniformly elliptic operator with vanishing zero-order term.

Let $\mathcal{B} = C^{1,\alpha}(\Sigma)$, and consider the closed convex subset
\begin{align*}
    \mathscr{C} \doteq \set{v \in \mathcal{B} \cap \Y} {\norm{v}_{\mathcal{B}} \leq K, \quad \norm{w_v}_{L^\infty} \leq K},
\end{align*}
for some constant $K > 0$ to be chosen later.  
The above discussion can be summarized by saying that the operator $P: \mathscr{C} \to \mathcal{B}$, which maps $v$ to the unique $C^{2,\alpha}(\Sigma)$ solution of \eqref{eq_freezed_dirichlet}, is well defined and by Schauder estimates is also compact.
Observe that if $v \in \mathscr{C}$ is a fixed point of the homotopy $tP$ with $t\in[0,1]$, then $v \in C^{2,\alpha}(\Sigma)$ and its mean curvature is
\begin{align*}
    H_v = t\left(\frac{d\rho_j(v)}{dV_{\sigma_v}}+\g(X_j,N_v)\right) + (1-t)w_v^{-1}H^S(x,v) \in C^{1,\alpha}(\Sigma),
\end{align*}
so by Schauder's theory $v \in C^{3,\alpha}(\Sigma)$. The uniform gradient bound required for the application of the Leray–Schauder fixed point theorem is provided by Bartnik’s estimate, that we rephrase as follows:
\begin{theorem}[\cite{bartnik84}, Corollary 3.4]\label{thm_bartnik_estimates}
    Assume $(\M,\g)$ is a globally hyperbolic spacetime, let $\tau\in C^\infty(\M)$ be a time function and set $T=-\bnabla\tau/|\bnabla\tau|$. Let $M\subseteq\M$ be a compact $C^{3,\alpha}$ spacelike hypersurface with boundary satisfying \ref{cauchy compatto}, and assume that its mean curvature satisfies the following structure conditions  with constant $\Lambda$:
    \begin{align}\tag{MCSC}\label{eq_structure conditions}
        |H|\leq \Lambda w, \qquad |\nabla H| \leq \Lambda(w^2 + w|\second|),
    \end{align}
    where as usual $w = -\g(T,N)$ and $\nabla$, $\abs{\cdot}$ are taken with respect to the induced metric on $M$. Then there exists a constant $C(\g,\tau,\partial M,\Lambda)$ such that $w\leq C$ on $M$.
\end{theorem}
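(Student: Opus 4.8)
Theorem~\ref{thm_bartnik_estimates} is Bartnik's \emph{a priori} tilt (equivalently, height–gradient) estimate, quoted verbatim from \cite{bartnik84}; rather than reproduce the proof I indicate its structure, since the same ingredients recur in Section~4. The starting point is the Jacobi equation $\Delta_g w = w(|\second|^2+\Ric(N,N))-g(\nabla H,T^{\top})+T(H_T)$. Inserting the structure conditions \eqref{eq_structure conditions}, the bounds \eqref{eq_bounds_lie}--\eqref{eq_bounds_ric} and the expansion \eqref{T(H_T)}, all the ``reaction'' terms on the right are seen to be $O\bigl(w^3+w^2|\second|\bigr)$, so $\Delta_g w\ge w|\second|^2-Cw^3-Cw^2|\second|$; the refined Kato inequality \eqref{stima seconda da sotto} together with \eqref{eq_nablaw_ealtro} shows moreover that $w|\second|^2$ dominates $\kappa\, w^{-1}|\nabla w|^2$ for some $\kappa>1$ up to lower order, which is exactly the surplus needed to close a maximum–principle argument.

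One then applies the maximum principle not to $w$ itself but to $\phi=w\,e^{K(\psi\circ F)}$, where $\psi$ is a function on the compact set $\overline{D(\Sigma)}$ chosen to be suitably convex in the timelike directions — such a $\psi$ exists by Hypothesis~\ref{cauchy compatto} — and $K$ is large: at an interior maximum of $\phi$ the negative term coming from $\Delta_g(\psi\circ F)$, combined with the differential inequality above, forces $w$ to be bounded in terms of $\sup_{\partial M}w$ and $\Lambda$. The boundary bound is in turn produced by comparing $M$ with spacelike barrier hypersurfaces $M^{\pm}$ satisfying $\partial M^{\pm}=\partial\Sigma$ and trapping $M$ near $\partial\Sigma$; their construction and the control of their normal derivatives along $\partial\Sigma$ rely on the smoothness of $\partial\Sigma$ and again on Hypothesis~\ref{cauchy compatto}. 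I expect the boundary barrier construction to be the delicate point of the argument.

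In the present paper this estimate enters only as a black box, within the Leray--Schauder scheme for \eqref{eq_approx_problem}. Concretely, one checks that every fixed point $v$ of the homotopy $tP$, whose mean curvature is $H_v=t\bigl(\tfrac{d\rho_j(v)}{dV_{\sigma_v}}+\g(X_j,N_v)\bigr)+(1-t)w_v^{-1}H^S(x,v)$, satisfies \eqref{eq_structure conditions} with a constant $\Lambda_j$ independent of $t$ and $v$: the bound $|H_v|\le\Lambda_j w_v$ is immediate from \eqref{eq_Hp_Lambda}, Proposition~\ref{prop_appendix_sec}(iii), the smoothness of $H^S$ on $\overline{D(\Sigma)}$ and $w_v\ge1$, while $|\nabla H_v|\le\Lambda_j(w_v^2+w_v|\second_v|)$ follows by differentiating $\g(X_j,N_v)$ and $w_v^{-1}H^S$ — which brings in $\bnabla N_v$, hence $\second_v$ — and using \eqref{eq_bounds_lie}; this last verification is the technically heaviest step. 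Theorem~\ref{thm_bartnik_estimates} then yields a uniform bound $w_v\le C_0$, hence uniform ellipticity and, by De~Giorgi--Nash and Schauder estimates, a uniform $C^{1,\alpha}$ bound; the Leray--Schauder fixed point theorem produces $u_j$, and a Schauder bootstrap — using that $\rho_j$ is $\mathscr{C}^{\infty}$-valued — upgrades $u_j$ to $C^{\infty}(\Sigma)$, spacelike since $w_{u_j}\le C_0<\infty$.
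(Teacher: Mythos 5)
The paper gives no proof of this statement: it is imported verbatim as Bartnik's a priori gradient estimate (\cite{bartnik84}, Corollary 3.4) and used as a black box inside the Leray--Schauder argument of Step 1, and your proposal treats it in exactly the same way, with an accurate account of how the structure conditions \eqref{eq_structure conditions} are verified for the fixed points of the homotopy $tP$. One minor caveat on your sketch of Bartnik's own argument: the refined Kato inequality \eqref{stima seconda da sotto} is a tool this paper develops later for the second fundamental form estimate, not an ingredient of Bartnik's gradient bound (which closes the maximum principle using the structure conditions and Cauchy--Schwarz directly), but since the theorem enters only as a quoted estimate this has no bearing on the correctness of your use of it.
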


Recalling \eqref{eq_bound_euclidean_norm}, \eqref{eq_bounds_lie} and Proposition \ref{prop_appendix_sec} we have
\begin{align*}
    |H_v| &\leq \abs{\frac{d\rho_j(v)}{dV_{\sigma_v}}}+ \abs{\g(X_j,N_v)} + w_v^{-1}|H^S(x,v)| \\& \leq C_{\rho,j} + \sqrt{2}w_v\norm{X_j} + \norm{H^S}_{C(\overline{D(\Sigma)})}.
\end{align*}
On the other hand, combining \eqref{eq_nablaw_ealtro} and \eqref{stima seconda da sotto} and omitting the subscript $v$ we deduce
\[
|\nabla w^{-1}| = \frac{|\nabla w|}{w^2} \le C\left( w^2 + |\second|w\right). 
\]
Furthermore, for each unit tangent vector $e$ to $M$
\begin{align*}
    |e\g(X_j,N)| &\leq |\g(\bnabla_{e}X_j,N)| + |\second(X_j^{\top},e)| \\
    &\leq \norm{X_j}_1 \norm{e}\norm{N} + |\second||X_j^{\top}| \\
    &\leq C \left(\norm{X_j}_1 w^2 + |\second|\norm{X_j}w\right).
\end{align*}
Putting all together, we have
\begin{align*}
    \abs{\nabla^v H_v} &\leq \abs{ \nabla^v\left(\frac{d\rho_j(v)}{dV_{\sigma_v}}\right)} + \abs{\nabla^v\g(X_j,N_v)} + |\nabla^v (w^{-1}H_v^S)| \\
    &\leq\norm{\frac{d\rho_j(v)}{dV_{\sigma_v}}}_{C^1(\Sigma)} w_v^2 + C \norm{X_j}_1 \left( w_v^2 + |\second_v|w_v \right) \\
    &\leq C_{\rho,j,X} \left( w_v^2 + |\second_v|w_v \right),
\end{align*}
where we used Proposition \ref{prop_appendix_sec} and the fact that, by \ref{cauchy compatto}, the norm $\norm{ H^S}_1$ is bounded by a constant $C$ that only depends on $\Sigma$. We thus conclude that $H_v$ satisfies the structure conditions \eqref{eq_structure conditions} with constant $\Lambda_j = C_{\rho,j,X}$. Theorem \ref{thm_bartnik_estimates} and the Hölder estimates of
Ladyzhenskaya–Ural'tseva (see \cite{GilbargTrudinger1977}) provide the estimates
\begin{align*}
    \norm{w_v}_{L^\infty(\Sigma)} \leq C_{\rho,j,X} \qquad \norm{v}_{C^{1,\alpha}(\Sigma)} \leq C_{\rho,j,X}
\end{align*}
for any fixed point $v \in \mathscr{C}$ of $tP$, uniformly in $t$. Choosing $K = C_{\rho,j,X} +1$ in the definition of $\mathscr{C}$, we see that every fixed point of $tP$ lies in the interior of $\mathscr{C}$ and by Leray-Shauder theory, there exists a fixed point $u_j$ which is a $C^{1,\alpha}$ solution to \eqref{eq_approx_problem}. Standard bootstrapping then yields smoothness.
\end{proof}

Fixed $\eta\in C^1_c(\mathring{\Sigma})$, we therefore have for any $j$
\begin{align}\label{approssimanti}
    \int_\Sigma \eta \frac{\alpha_j^m}{w_j}\diver_{M_j}T \, dV_j = \int_\Sigma w_j\alpha_j^{m-1}\sigma_j(du_j,d\eta) \, dV_j + \int_\Sigma \eta\alpha_j^m \, d\rho_j + \int_\Sigma \eta\alpha^m\g(X_j,N_j) \, dV_j
\end{align}
where $M_j = F_{u_j}(\Sigma)$, $\sigma_j = \sigma_{u_j}$, $\alpha_j = \alpha_{u_j}$, $\rho_j = \rho_j(u_j)$ and so on.

\subsection*{Step 2: Existence of a limit}\label{step2} \emph{There exists} $u\in C(\Sigma)\cap W^{2,2}_{\loc}(\Sigma)$ \emph{such that, up to a subsequence,} $u_j\to u$ \emph{ in } $W^{1,p}(\Sigma)$ \emph{for each} $p \in [1,\infty)$ (\emph{in particular, $u_j \to u$ in $C(\Sigma)$}), \emph{and} $du_j\to du$ \emph{a.e. in $\Sigma$.}

\emph{Furthermore, $u$ satisfies $w_u \in L^1(\Sigma)$ and the second in \eqref{eq_higherinteg}.}

\begin{proof}
The uniform convergence to a function $u\in C(\Sigma)$ follows from Ascoli-Arzelà once one notices that $\{u_j\}$ is equi-Lipschitz and $u_j=\varphi$ in $\partial\Sigma$. By hypothesis, there exists a compact set $E\subseteq \mathring{\Sigma}$ with $\mathscr{H}^1(E) = 0$ such that for any $\Sigma'\Subset \mathring\Sigma\backslash E$ it holds $\rho(u)\in \mathscr{L}^2(\Sigma')$ for all $u\in\Y$. In particular $\tfrac{\rho_j(u_j)}{dV_{\sigma_{u_j}}}$ is $L^2(\Sigma')$ for all $j$ and, by Proposition \ref{prop_appendix_sec}, for every $\delta > 0$ there exists a constant $C = C_{\Sigma',\delta}$ such that for $j>>1$
\begin{align*}
    \norm{\frac{\rho_j(u_j)}{dV_{\sigma_{u_j}}}}_{L^2(\Sigma'_{\delta/2})}  &\leq C \norm{\frac{\rho(u_j)}{dV_{\sigma_{u_j}}}}_{L^2(\Sigma')} \leq \mathcal{I}_2.
\end{align*}
for some constant $\mathcal{I}_2$ depending on $\Sigma',\delta$ but not on $j$. This second inequality follows from the fact that $\Y$ is weakly compact and, by our assumption, $\rho \llcorner \Sigma'$ is bounded. 
Similarly, by the continuity of $\rho: (\Y, \|\cdot \|_{C(\Sigma)}) \to\mathscr{M}(\Sigma)$, Lemma \ref{conseguenze di C} and Proposition \ref{prop_appendix_sec},
\begin{align*}
    \norm{\frac{\rho_j(u_j)}{dV_{\sigma_{u_j}}}}_{L^1(\Sigma)} \le C \norm{\rho_j(u_j)}_{\mathscr{M}(\Sigma)} \leq C_\rho \doteq \mathcal{I}_1
\end{align*}
again for some constant $\mathcal{I}_1$ uniform in $j$. Moreover, by \eqref{C_9} in Lemma \ref{conseguenze di C}
\begin{align*}
    |\g(X_j,N_j)| \leq C_X w_j
\end{align*}
for some $C_X\Lambda\geq 0$. Therefore, by Proposition \ref{stima di energia} and Corollary \ref{Dlnw e D^2u}, if $D_j$ denotes the connection of the metric $\sigma_j =\sigma_{u_j}$ there exist constants $C_1 = C_1(X,\Sigma,\mathcal{I}_1)$ and $C_2 = C_2(\delta,X,\Sigma,\Sigma',\mathcal{I}_1,\mathcal{I}_2)$ such that
\begin{align}
&\int_\Sigma w_j dV_j \leq C_1 \label{eq_energ}\\
& \int_{\Sigma'_\delta} \left(w_j|D^2_j u_j|^2 + w_j^3|D^2_ju_j(D_ju_j,\cdot)|^2+w_j^5 D^2_ju_j(D_ju_j,D_ju_j)^2\right) \, dV_j \leq C_2 \label{eq_secondfund}.
\end{align}
As a consequence of \eqref{eq_W22_norms}, $\{u_j\}$ is bounded in $W_{\loc}^{2,2}(\Sigma')$. Hence, up to a subsequence $u_j\to \tilde u$ strongly in $W^{1,2}_\loc(\Sigma')$, and possibly taking a further subsequence, $u_j$ and $du_j$ converge to $\tilde u$ and $d\tilde u$ $\sigma
_\varphi$-a.e., which by the uniqueness of the limit implies that $\tilde u = u$ on $\Sigma'$. On the other hand, by reflexivity, up to another subsequence $\{u_j\}$ converges weakly in $W^{2,2}_{\loc}(\Sigma')$, and hence also weakly in $W^{1,2}_{\loc}(\Sigma')$, so by uniqueness $u \in W^{2,2}_{\loc}(\Sigma')$. Moreover, since $u_j$ is bounded in $W^{1,\infty}(\Sigma)$, H\"older's inequality and $u_j \to u$ in $W^{1,2}_\loc(\Sigma)$ easily imply that $u_j \to u$ in $W^{1,p}(\Sigma)$ for each $p \in [1,\infty)$. 

To conclude, passing to limits \eqref{eq_energ} yields $w_u \in L^1(\Sigma)$, while combining a truncation argument on $w_j$ with lower-semicontinuity (see \cite[Corollary 4.11]{BIMM} for full details) one can pass to limits in \eqref{eq_secondfund} and deduce the second in \eqref{eq_higherinteg} by the arbitrariness of $\Sigma'$.

%
\end{proof}

\subsection*{Step 3: Convergence of the integral identity} As a last step we shall prove that
\begin{align*}
    \lim_{j\to \infty} \int_\Sigma \eta \frac{\alpha_j^m}{w_j}\diver_{F_j}T \, dV_j 
    &= \int_\Sigma \eta \frac{\alpha^m}{w} \diver_MT \, dV_\sigma \tag{a}\label{eq_a}\\ 
    \lim_{j\to\infty} \int_\Sigma \eta w_j\alpha_j^{m-1}\sigma_j(du_j,d\eta) \, dV_j 
    &= \int_\Sigma \eta w\alpha^{m-1}\sigma(du,d\eta) \, dV_\sigma \tag{b}\label{eq_b}\\
    \lim_{j\to\infty} \int_\Sigma \eta\alpha_j^m \, d\rho_j + \int_\Sigma \eta\alpha_j^m\g(X_j,N_j) \, dV_{\sigma_j} 
    &= \int_\Sigma \eta\alpha^m \, d\rho + \int_\Sigma \eta \alpha^m\g(X,N) \, dV_\sigma \tag{c}\label{eq_c}.
\end{align*}
where $\alpha = \alpha_u$, $\rho = \rho(u)$ and so forth. Recall that, by the Vitali convergence theorem, for any $\{f_j\}\in L^1_\loc(\Sigma)$ and $f$ measurable function we have
\[
\begin{array}{r c l}
f_j \to f \text{ in $L_\loc^1(\Sigma)$} & \iff & 
\begin{array}{l}
f_j \to f \ \text{in measure and} \\
\{|f_j|\}_j \ \text{is locally uniformly integrable in $L^1(\Sigma)$.}
\end{array}
\end{array}
\]
The convergence in measure of the integrands are straightforward and, in fact, we can say more: for any $j$ there exists $v_j\in C(\Sigma)$ such that $dV_j = v_j \, dV_{\sigma}$, and by the uniform convergence $u_j\to u$ established in paragraph \ref{step2} we deduce
\begin{align*}
    \alpha_j \to \alpha, \qquad v_j \to 1 \qquad \text{in $C(\Sigma)$.}
\end{align*}
and furthermore the a.e. convergence $du_j\to du$ ensures that
\begin{align*}
     w_j \to w, \qquad \diver_{M_j} T\to \diver_MT \qquad \text{a.e. in $\Sigma$.}
\end{align*}
Therefore, we only need to care about the local uniform integrability, which in turn follows from the local uniform integrability of $\{w_j\}$, because the other terms are uniformly bounded. 


\subsubsection{Local uniform integrability of $\{w_j\}$ in $\mathring\Sigma$} 
By the de la Vallée-Poussin theorem, the local uniform integrability of $\{w_j\}$ on any $\Sigma
'\Subset\mathring\Sigma\backslash E$ is equivalent to the existence of a family of increasing convex functions $f_\epsilon:[0,\infty)\to[0,\infty)$ depending on $\epsilon > 0$ such that
\begin{align*}
    \lim_{t\to\infty} \frac{f(t)}{t} = \infty, \qquad \sup_{j\in\N} \int_{\Sigma_\epsilon'} f_\epsilon(|w_j|) \, dV_\sigma < \infty.
\end{align*}
Choose $f_\epsilon(t) = t\ln (t + 1)$ for any $\epsilon>0$. By Proposition \ref{prop_appendix_sec}, 
\begin{gather*}
    \norm{\frac{\rho_j(u_j)}{dV_{\sigma_{u_j}}}}_{L^1(\Sigma)}\leq \mathcal{I}_1,
    \quad \norm{\frac{d\rho_j(u_j)}{dV_{\sigma_{u_j}}}}_{L^2(\Sigma'_\epsilon)} \leq C_2, \quad  \norm{\frac{\rho(u_j)}{dV_{\sigma_{u_j}}}}_{L^2(\Sigma')} \leq \mathcal{I}_2, \\
    |\g(X_j,N_{u_j})| \leq C_X w_j,
\end{gather*}
thus Proposition \ref{higher integrability} ensures the uniform bound
\begin{align*}
    \int_{\Sigma_\epsilon'} w_j \ln(w_j + 1 ) \, dV_{\sigma_j} \leq C(\epsilon,X,\Sigma',\mathcal{I}_1,\mathcal{I}_2)<\infty.
\end{align*}
This shows that $\{w_j\}$ is locally uniformly integrable over $\Sigma'$, and, by the arbitrariness of $\Sigma'\Subset\mathring\Sigma\backslash E$, in the whole of $\mathring\Sigma\backslash E$. We conclude by invoking Theorem \ref{singolarità rimovibili} to establish local uniform integrability in the entire $\mathring{\Sigma}$. \qed

Now we can prove the limits \eqref{eq_a}, \eqref{eq_b} and \eqref{eq_c}. Notice that, by \eqref{eq_bounds_lie} we have
\begin{align*}
    \eta w_j^{-1}\alpha_j^m |\diver_{M_j}T| \, v_j \leq Cw_j
\end{align*}
hence the integrand in \eqref{eq_a} is locally uniformly integrable and the $L^1$ convergence follows. The proof of \eqref{eq_b} is identical. Concerning the last one, as a consequence of $\rho_j\overset{*}{\rightharpoonup} \rho$ (Proposition \ref{prop_appendix_sec}) and $\alpha_j\to \alpha$ in $C(\Sigma)$ we have
\begin{align*}
    \int_\Sigma \eta\alpha_j^m d\rho_j \longrightarrow \int_\Sigma \eta\alpha^m \, d\rho.
\end{align*}
Moreover, by Proposition \ref{prop_appendix_sec} it holds $\g(X_j,N)\rightharpoonup \g(X,N)$ in $\mathscr{M}(\Sigma)$ and $|\g(X_j,N_j)|\leq C_X w_j$, so
\begin{align*}
    \left|\int_\Sigma \eta\alpha_j^m \g(X_j,N_j)v_j \, dV_j - \int_\Sigma \eta\alpha^m\g(X,N) \, dV_\sigma\right| \leq \\
    \left| \int_\Sigma \left(\alpha_j^mv_j - \alpha^m \right)\g(X_j,N_j) \, dV_\sigma \right| &+ \left| \int_\Sigma \eta\alpha^m \left( \g(X_j,N_j) - \g(X,N) \right) \, dV_\sigma\right| \leq\\
    \leq C_X \norm{\alpha_j^mv_j - \alpha^m}_{L^\infty(\Sigma)}\int_\Sigma w_j \, dV_j &+ o(1), \qquad j\to\infty.
\end{align*}
The energy estimate in Proposition \ref{stima di energia} then yields 
\[
\int_\Sigma \eta\alpha_j^m \g(X_j,N_j)v_j \, dV_j \to \int_\Sigma \eta\alpha^m \g(X,N)v \, dV_\sigma,
\]
which concludes the proof of \eqref{eq_c}.

This concludes the proof of the existence statement in Theorem \ref{main result}. 

\subsection*{Step 4: Absence of light-segments} \textit{The graph of $u$ has no light-segments.}

\begin{proof}
    First, note that since $\mathscr{H}^1(E)=0$ it suffices to prove the absence of light segments within $\Sigma'$.

Suppose by contradiction that there exists a light segment $\Gamma \Subset \Sigma'$ with $\mathscr{H}^1(\Gamma)>0$. 
Take a smooth domain $Q\Subset \Sigma'$ with $\Gamma\subset \partial Q$. By the trace inequality and Propositions \ref{stima di energia} and \ref{higher integrability} we have
\begin{align*}
    \int_{\partial Q} \ln w_j \, d\mathscr{H}^1 \leq C \left(\int_Q w_j \, dV + \int_Q |D \ln w_j| \right) \leq C'
\end{align*}
uniformly in $Q$. On the other hand, we claim that $\ln w_j \to \infty$ $\mathscr{H}^1$-almost everywhere in $\Gamma$ as $j \to \infty$, thus reaching a contradiction with $\mathscr{H}^1(\Gamma)>0$. If $\ln w_j$ doesn't diverge $\mathscr{H}^1$-a.e. in $\Gamma$ then., up to a subsequence, we can find a measurable set $A\subseteq\Gamma$ with $\mathscr{H}^1(A)>0$ and $\theta \in (0,1)$ such that $|Du_j|\leq 1-\theta$ in $A$ for each $j$. However,
\begin{align*}
    \mathscr{H}^1(\Gamma) = u(y)-u(x) &= \lim_{j\to\infty} \int_\Gamma |Du_j| \, ds \\ &= \lim_{j\to\infty} \int_A |Du_j| \, ds + \int_{A^c} |Du_j| \, ds \\ &\leq (1-\theta)\mathscr{H}^1(A) +\mathscr{H}^1(\Gamma)- \mathscr{H}^1(A)
\end{align*}
which implies $\mathscr{H}^1(A)\leq (1-\theta)\mathscr{H}^1(A)$, a contradiction with $\mathscr{H}^1(A)>0$.
\end{proof}

\subsection*{Step 5: Estimates on the singular set} 
\textit{The singular set of $u$ }
\begin{align*}
    \mathscr{S} = \Big\{ x \in \Sigma \ : \ \liminf_{r \to 0} \|du\|_{L^\infty(B_r(x),\sigma_u)} = 1 \Big\}
\end{align*}
\textit{is a closed negligible set.}

\begin{proof}
First, observe that since $|du|_{\sigma_u} \le 1$ the singular set can be rewritten as
\begin{align*}
    \mathscr{S} = \Big\{ x \in \Sigma \ : \ \exists r_0 \ \text{such that } \forall \, 0 < r < r_0 \ \text{ it holds } \|du\|_{L^\infty(B_r(x),\sigma_u)} = 1 \Big\}
\end{align*}
By its very definition, $\Sigma \backslash \mathscr{S}$ is open, thus $\mathscr{S}$ is closed in $\Sigma$. Note that on $\Sigma \backslash \mathscr{S}$ the function $|du|_{\sigma_u}$ is locally uniformly bounded away from $1$, thus the Lorentzian mean curvature operator is non-singular there.

Hereafter, measures will be taken with respect to $\sigma_\varphi$. 
Let $\eps >0$ and use Lusin's theorem to find a compact set $F_\eps \subset \Sigma$ such that $|\Sigma \backslash F_\eps| < \eps$ and $|du|_{\sigma_u}$ is continuous in $F_\eps$. To prove the thesis, it is enough to show that $\mathscr{S}_\eps = \mathscr{S} \cap F_\eps$ is negligible, as it would imply
\[
|\mathscr{S}| = |\mathscr{S}_\eps| + |\mathscr{S} \backslash F_\eps| < \eps \to 0 
\]
as $\eps \to 0$. Let $c>0$ such that $\di_{\sigma_\varphi} \le c \cdot \di_{\sigma_j}$ for each $j$, which is possible by Proposition \ref{conseguenze di C}.
    Fix $\delta >0$, a compact set $K \subseteq \mathring{\Sigma}$ and set $R = \di_{\sigma_\varphi}(K,\partial\Sigma)/(10c)$. For each $x\in \mathscr{S}_\eps \cap K$, by the continuity of $|du|_{\sigma_u}$ we can choose $r_x < R/5$ such that $|du|_{\sigma_u} >1-\delta$ in $B_{r_x}(x)$. Vitali's covering theorem enables to extract from the family $\{B_{r_x}(x)\}_{x\in \mathscr{S}_\eps \cap K}$ a disjoint countable family $\{B_i\}$, $B_i = B_{r_{x_i}}(x_i)$ such that $\mathscr{S}_\eps \cap K\subseteq \bigcup 5B_i$ where $5B_i$ is the ball whose radius is five times that of $B_i$. Setting $C_\delta = (1-(1-\delta)^2)^{-1/2}$, by Step 2 we have 
\begin{align*}
    C_\delta\sum_{i=0}^{\infty} |B_i| \leq \int_{\Sigma} w_{u} \, dV_{\sigma_\varphi} \leq C.
\end{align*}
Note that each $B_i$ such that $5B_i$ touches $K$ lies in $B_R(K) \Subset \mathring{\Sigma}$, so we can choose a constant $C_K$ such that $|5B_i| \leq C_K  |B_i|$. Hence,  
\begin{align*}
    |\mathscr{S}_{\eps} \cap K| \leq \sum_{i \, : \, 5B_i \text{ touches $K$}} |5B_i| \leq C_K \sum_{i=0}^{\infty}| B_i| \leq \frac{C C_K}{C_\delta}.
\end{align*}
Letting $\delta\to 0$ gives $|\mathscr{S}_\eps \cap K| = 0$, and the thesis follows from the arbitrariness of $K$.
\end{proof}

\subsection*{Step 6: Higher regularity} Assume that $\Sigma' \Subset \mathring{\Sigma} \backslash E$ is a domain such that $X$ is $C^1$ in $V_{\Sigma'} \doteq \overline{D(\Sigma)} \cap (\Sigma' \times \R)$, and that $\rho\llcorner\Sigma'$ is valued in $\mathscr{C}^1(\overline{\Sigma'})$ and there bounded, say
\begin{align*}
    \norm{\frac{d\rho(u)}{dV_{\sigma_u}}}_{C^1(\overline{\Sigma'})} \leq \Lambda_1
\end{align*}
Then, by Proposition \ref{prop_appendix_sec}, for each domain $\Sigma''\Subset\Sigma'$ we deduce
\begin{align*}
    \norm{\rho_j(u_j)}_{C^1(\overline{\Sigma''})} + \norm{X_j}_{C^1(V_{\Sigma''})} \leq \Lambda_2
\end{align*}
hence the the mean curvatures $H_{u_j}$ satisfy the structure conditions of Bartnik \eqref{eq_structure conditions} with a uniform constant $\Lambda_2$ (see \cite{bartnik88} pag. 150). Then by \cite[Theorem 3.8]{bartnik88} the limit $u$ is $C^{2,\alpha}_\loc$ and spacelike away from the set of light segments of $u$ over $\Sigma'$, which we know to be empty by (i) in Theorem \ref{main result}. Equation \eqref{eq_mean_curvature_sigma_ij} is therefore locally uniformly elliptic in $\Sigma'$, thus the higher regularity of $u$ depending on that of $(\rho,X)$ follows by Schauder's estimates. In particular, if $\rho$ and $X$ are smooth then so is $u$.

\resumetoc

\appendix

\section{Application to the Born-Infeld theory}\label{appe_BI}

\stoptoc

In this section we recall Maxwell's and Born-Infeld's models for electrodynamics. Our goal is is to write down the equation satisfied by the electric potential generated by a fixed charge $\rho$ in a static Lorentzian spacetime $(V,\langle\cdot,\cdot\rangle)$ according to Born-Infeld's theory. We shall see that it can be interpreted as a prescribed mean curvature equation in $V$ endowed with a different Lorentzian metric $\g$, closely related to $\langle \cdot,\cdot \rangle$.

Born-Infeld's theory for electromagnetism only makes sense in $4$-dimensional spacetimes, and for this reason we shall restrict our treatment to ambient dimension $4$. However, the electrostatic equations \eqref{eq_electr_BI} are meaningful in any dimension.  

Recall that the Hodge dual in a pseudo-Riemannian manifold $(V,\langle\cdot, \cdot\rangle)$ is defined by
\begin{align*}
    \omega \wedge \star \eta = \langle \omega,\eta \rangle \, dV
\end{align*}
where $dV$ is the volume element of $\langle\cdot,\cdot\rangle$.
In particular, in a Lorentzian manifold of dimension $n$ it holds
\begin{align*}
    \star \star \omega = (-1)^{n(k-n)+1}\omega \qquad\forall\omega \ \text{$k$-form in $V$.} 
\end{align*}

\subsection*{Classical electrodynamics} Our treatment and conventions follow \cite{misner1973gravitation}. Let $(V,\langle \cdot,\cdot\rangle)$ be a $4$-dimensional spacetime and fix a timelike, future-pointing vector field $J\in\X(V)$ representing the electric current density. The \emph{Maxwell Lagrangian} is given by
\begin{align*}
    \mathsf{L}_{\mathsf{M}} = \int_{V} F \wedge \star F + \int_{V} A(J) \, dV, \qquad F = dA,
\end{align*}
and its Euler-Lagrange equations are 
\begin{align}\label{eq_maxwell}
    \begin{cases}
        dF = 0 \\
    d\star F = \star J_\flat.
    \end{cases}
\end{align}
A differential $2$-form $F$ that obeys \eqref{eq_maxwell} is the electromagnetic field generated by the charge $J$.

For a given observer $T\in \X(V)$, we define \emph{electric field} and \emph{magnetic field} relative to $T$ as
\begin{align*}
    \eps \doteq -\iota_TF \qquad \beta \doteq - \iota_T\star F
\end{align*}
and the \emph{electric displacement field} and \emph{magnetic field strength} according to the decomposition
\begin{align}\label{eq_F_decomposition}
    F = - \eps \wedge T_\flat + B, \qquad \star F = - \beta \wedge T_\flat - E.
\end{align}
Notice that $\iota_TE = \iota_TB = 0$, so, applying the Hodge dual to the first and comparing the two expressions one gets
\begin{align*}
    E =  \star (\eps\wedge T_\flat) \qquad B =  \star(\beta\wedge  T_\flat).
\end{align*}
If we write the current field as
\begin{align*}
    J = J^S + \rho T, \qquad \rho = -\g(J,T),
\end{align*}
since $\g(J^S,T)=0$, the Hodge dual of $J_\flat$ writes as 
\begin{align*}
    \star J_\flat = -j \wedge T_\flat + \rho \star T_\flat
\end{align*}
for some suitable $2$-form $j$ such that $\iota_T j = 0$.

Assume that $T$ is a \emph{synchronizable observer}, that is, $T = -\balpha \bnabla \tau$ for some smooth functions $\balpha>0$ and $\tau$. Then the orthogonal distribution $T^\perp$ is integrable (\cite[Proposition 12.30]{oneill}) and, at least locally, 
\begin{align*}
    \g = -\balpha^2 d\tau^2 + \bar\sigma
\end{align*}
for some symmetric $(2,0)$-tensor $\bar\sigma$ such that $\iota_T\bar\sigma=0$. Assume that the time function $\tau$ splits $V = \R\times S$ with $\{\tau = t\} = \{t\}\times S$ for some fixed $3$-manifold $S$. Denoting by $d_S$ the exterior derivative on $S$, since $B$ and $E$ have vanishing contraction with $T$, we can write
\begin{align*}
    dB = d_S B +  (\lie_{\partial_\tau}B) \wedge d\tau, \qquad dE = d_S E + (\lie_{\partial_\tau}E) \wedge d\tau.
\end{align*}
Here we are making a slight abuse of notation by writing $d_SB$ to denote the exterior derivative in $S$ of the restriction of $B$ to the space slice passing through a given point. Similarly
\begin{align*}
    d\balpha = d_S \balpha + \partial_\tau \balpha \wedge d\tau.
\end{align*}
Writing the Lorentzian volume form $dV$ as
\[
dV = (\eps \wedge T_\flat) \wedge \star (\eps \wedge T_\flat) = \eps \wedge T_\flat \wedge E = \eps \wedge E \wedge T_\flat  
\]
and decomposing $dV = dV_\sigma \wedge T_\flat$, where $dV_\sigma = - \star T_\flat$ is the volume form of $S$, from $\iota_T(\eps \wedge E) = 0$ we deduce that $E = \star_\sigma \eps$, where $\star_\sigma$ is the Hodge dual on the space slice with the induced metric. Proceeding analogously for $B$ we therefore obtain
\begin{align}\label{eq_sigma_hodge}
    E = \star_\sigma \eps, \qquad B = \star_\sigma \beta.
\end{align}
The first equation in \eqref{eq_maxwell} becomes
\begin{align*}
    (d_S(\balpha \eps) + \lie_{\partial_\tau} B) \wedge d\tau + d_S B = 0
\end{align*}
that is,
\begin{align*}
    \begin{cases}
        d_S B = 0 \\
        d_S(\balpha \eps) = -\lie_{\partial_\tau} B. 
    \end{cases}
\end{align*}
The same way the second one becomes
\begin{align}\label{eq_maxwell_non_homogeneous}
    \begin{cases}
        d_S E = -\rho \star T_\flat \\
        d_S(\balpha \beta) = \balpha j + \lie_{\partial_\tau} E.
    \end{cases}
\end{align}
Suppose we are in the electrostatic regime, that is,
\begin{align}\label{elettro_maxw}
    B=0, \qquad j=0, \qquad \lie_{\partial_\tau}\langle\cdot,\cdot\rangle.
\end{align}
A direct computation gives $i_{\partial_\tau} \lie_{\partial_\tau} \bar\sigma = 0$, so inserting into the third of \eqref{elettro_maxw} we get $\partial_\tau\balpha=0$ and $\lie_{\partial_\tau}\bar\sigma
=0$. Hence, $V$ is a warped product
\begin{align}\label{eq_static_spacetime}
    V = \R\times S  \qquad \langle \cdot,\cdot \rangle = -(\pi^*\alpha)^2 d\tau^2 + \pi^*\sigma
\end{align}
for a positive function $\alpha\in C^\infty(S)$ and a Riemannian metric $\sigma$ on $S$. Moreover, by the second in \eqref{eq_maxwell_non_homogeneous} we have $\lie_{\partial_\tau}E=0$, 
and by \eqref{eq_sigma_hodge} $\lie_{\partial_\tau}\eps = 0$. Thus, $E$ and $\eps$ can be seen as fields on $S$. In this setting
Maxwell's equations reduce to
\begin{align}\label{eq_maxwell_electrostatic}
    \begin{cases}
        d_S(\alpha \eps) = 0 \\
        d_S E = -\rho \star T_\flat.
    \end{cases}
\end{align}
Using the second in \eqref{eq_maxwell_electrostatic} and \eqref{eq_sigma_hodge} we have $\star_\sigma d_S \star_\sigma \eps = \rho \star_\sigma dV_\sigma = \rho$, that is
\begin{align*}
    \diver_\sigma \eps^\sharp = \rho
\end{align*}
where the musical isomorphism is performed this time with respect to $\sigma$. Assuming $S$ simply connected, the first equation allows us to write $du =\alpha \eps $ for some smooth function $u$ in $S$, hence, if $Du$ is its gradient with respect to $\sigma$ we obtain the electrostatic equation (Poisson's equation) in a static spacetime:
\begin{align*}
    \diver_\sigma (\alpha^{-1}Du) = \rho.
\end{align*}

\subsection*{Born-Infeld electrodynamics} 

In their proposal to overcome the infinity energy problem occurring in Maxwell's model, Born and Infeld in \cite{BI, BI_2} suggested to use the Lagrangian 
\begin{align}\label{Lagra_BI}
    \mathsf{L}_{\mathsf{BI}}(A) = \int_{V} \left( 1 - \sqrt{1+ P - \frac{Q^2}{2}} \right) \, dV + \int_{V} A(J) \, dV
\end{align}
where
\begin{align*}
    P = \langle F,F \rangle,  \qquad Q = \langle F, \star F\rangle , \qquad F = dA.
\end{align*}
(we follow the sign convention in \cite{Yang}, opposite to that in \cite{BI_2}). They also indicated the  Lagrangian
\begin{align}\label{Lagra_B}
    \mathsf{L}_{\mathsf{B}}(A) = \int_{V} \left( 1 - \sqrt{1+ P} \right) \, dV + \int_{V} A(J) \, dV,
\end{align}
first proposed in \cite{born}, as a possible alternative model, see \cite[(2.27) and (2.28)]{BI_2}. However, later works in \cite{boillat,plebanski} pointed out that $\mathsf{L}_{\mathsf{BI}}$ has the distinctive feature of generating an electrodynamics free from the phenomenon of birefringence (cf. \cite{Kiessling-legacy}). While the term $Q$ in $\mathsf{L}_{\mathsf{BI}}$ forces to restrict to $4$-dimensional $V$, the Lagrangian $\mathsf{L}_{\mathsf{B}}$ is meaningful in any dimension, and the following computations show that the resulting electrostatics lead to the same equation, which is \eqref{eq_born_infeld_electrostatics_intro}. 

%


The Euler Lagrange equations of $\mathsf{L}_{\mathsf{BI}}$ are
\begin{align}
    \begin{cases}
        dF = 0 \\
        d\star G = \star J_\flat
    \end{cases}
\end{align}
where
\begin{align*}
    G = w \left( F - Q\star F\right) \qquad w = \frac{1}{\sqrt{1 + P - \frac{Q^2}{2}}}.
\end{align*}
For a fixed observer $T$ decompose $F$ and its Hodge dual as in \eqref{eq_F_decomposition} and set $h = -\iota_T\star G$ and $D$ so that
\begin{align*}
    \star G = - h \wedge T_\flat - D.
\end{align*}
It is immediate to see that
\begin{align*}
    h = w(\beta + Q\eps), \qquad D = w(E - QB).
\end{align*}
Since Born-Infeld equations have the same structure as Maxwell's, as in the previous section we see that, for a synchronizable observer, they are equivalent to the system
\begin{align}\label{eq_four_BI_equations}
    \begin{cases}
    d_S D = -\rho \star T_\flat \\
    d_S B =0 \\
    d_S(\balpha\eps) = - \lie_{\partial_\tau} B \\
    d_S(\balpha h) = \balpha j + \lie_{\partial_\tau} D.
    \end{cases}
\end{align}
As before, in the case of a static spacetime \eqref{eq_static_spacetime}, the Born-Infeld equation for electrostatics (that is, $j=B=0$) reduce to
\begin{align}\label{eq_electr_BI}
    \begin{cases}
        d_S(\alpha\eps)=0 \\
        d_S D = -\rho \star T_\flat.
    \end{cases}
\end{align}
Moreover, $Q=0$ and $ P = -\abs{\eps}_\sigma^2$, the electric displacement field $D$ is simply $D = wE$ and by the third in \eqref{eq_four_BI_equations}, if $S$ is simply connected $\eps = \alpha^{-1}d_Su$ for some $u\in C^\infty(S)$. Applying again \eqref{eq_sigma_hodge} we finally obtain the Born-Infeld equation for electrostatics in a static spacetime:
\begin{align}\label{eq_born_infeld_electrostatics}
    \diver_\sigma \left(\frac{\alpha^{-1}Du}{\sqrt{1 - \alpha^{-2}|Du|^2}}\right) = \rho.
\end{align}
A computation similar to those performed in Section \ref{sec_prelim} (see also (2.17) in \cite{bartnik84}) show that the mean curvature of the graph $M_u\subseteq \R\times S$ of a function $u\in C^\infty(S)$ with respect to the metric $\g = -(\pi^*f)^2 \, d\tau^2 + \pi^*\sigma$, with $f\in C^\infty(S)$, is given by
\begin{align*}
    H_u = \diver_\sigma \left(\frac{fDu}{\sqrt{1 - f^{2}|Du|^2}}\right) + \g(N,\bnabla_{\tilde T}\tilde T).
\end{align*}
Here $\tilde T = -f\bnabla\tau = f^{-1}\partial_\tau$, $\bnabla$ is the connection of $\g$ and $N$ is the $\g$-normal future pointing normal of $M_u$. Choosing $f = \alpha^{-1}$, and taking into account the observations on the dimension of $V$ at the beginning of this subsection, we can state the following:

\begin{prop}
    Let $(S,\sigma)$ be a $m$-dimensional Riemannian manifold, let $\alpha\in C^\infty(S)$ be a positive function, and consider on the product $V=\R\times S$ the Lorentzian warped metrics
    \begin{align*}
        \langle \cdot,\cdot\rangle = -(\pi^*\alpha)^2d \tau^2 + \pi^*\sigma, \qquad \g = -(\pi^*\alpha)^{-2}d \tau^2 + \pi^*\sigma,
    \end{align*}
    where $\pi:\R\times S\to S$ is the canonical projection. For a given charge distribution $\rho \in C^\infty(S)$, let $u$ be the electrostatic potential generated by $\rho$ according to Born-Infeld's theory in $(V, \langle \cdot ,\cdot \rangle)$, that is, $u$ solves \eqref{eq_born_infeld_electrostatics}. Then, the graph $M_u$ of $u$ in $(V,\g)$ is a spacelike hypersurface whose mean curvature is 
    \begin{align*}
        H_u = \rho + \g(X,N_u), \qquad \text{where} \quad X = \bnabla_{\tilde T}\tilde T, \quad \tilde T = \alpha \partial_\tau.
    \end{align*}    
\end{prop}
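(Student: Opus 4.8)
The plan is to deduce the statement directly from the mean curvature formula recorded just before it, specialized to the choice $f = \alpha^{-1} \in C^\infty(S)$. With this choice the warped metric $-(\pi^*f)^2\,d\tau^2 + \pi^*\sigma$ becomes exactly the metric $\g = -(\pi^*\alpha)^{-2}\,d\tau^2 + \pi^*\sigma$ of the statement, and the field $\tilde T = -f\bnabla\tau = f^{-1}\partial_\tau$ appearing in that formula becomes $\tilde T = \alpha\,\partial_\tau$, since $f^{-1} = \alpha$.

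First I would record that $M_u$ is spacelike for $(V,\g)$. As $u$ solves \eqref{eq_born_infeld_electrostatics}, the radicand $1 - \alpha^{-2}|Du|^2$ is strictly positive on $S$, for otherwise the left-hand side of the equation would be undefined. A tangent vector to $M_u$ at $(u(x),x)$ has the form $(du(v),v)$ with $v \in T_xS$, and its $\g$-square length equals $\sigma(v,v) - \alpha(x)^{-2}(du(v))^2 \ge \bigl(1 - \alpha^{-2}|Du|^2\bigr)\sigma(v,v) > 0$ for $v \neq 0$, using $(du(v))^2 = \sigma(Du,v)^2 \le |Du|^2\,\sigma(v,v)$; hence $M_u$ is spacelike and its future-pointing $\g$-unit normal $N_u$ is well defined.

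Next I would substitute $f = \alpha^{-1}$ into the mean curvature formula to get
\[
H_u = \diver_\sigma\!\left(\frac{\alpha^{-1}Du}{\sqrt{1-\alpha^{-2}|Du|^2}}\right) + \g\bigl(N_u,\bnabla_{\tilde T}\tilde T\bigr),
\]
and then invoke \eqref{eq_born_infeld_electrostatics} to replace the divergence term by $\rho$. Setting $X \doteq \bnabla_{\tilde T}\tilde T$ and using that $\g$ is symmetric yields $H_u = \rho + \g(X,N_u)$, which is the assertion.

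The only computational ingredient is the mean curvature formula itself — the identity stated before the proposition, attributed to a computation parallel to Section \ref{sec_prelim} and to (2.17) in \cite{bartnik84}; it can be obtained by specializing the expression \eqref{seconda} for $w\second$ to a warped product $-(\pi^*f)^2\,d\tau^2 + \pi^*\sigma$ (viewed, after a conformal rescaling, in the splitting form of Section \ref{sec_prelim}) and tracing with respect to the induced metric, the term $\g(N_u,\bnabla_{\tilde T}\tilde T)$ originating from the Lie-derivative piece $\tfrac12 F_u^*\lie_T\g$. Once that formula is granted, the proof is pure bookkeeping, so I do not anticipate any genuine obstacle beyond keeping the two metrics $\langle\cdot,\cdot\rangle$ and $\g$ and their respective roles straight.
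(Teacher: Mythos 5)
Your proposal is correct and follows essentially the same route as the paper: the proposition is stated there as an immediate consequence of the preceding mean curvature formula for graphs in $-(\pi^*f)^2 d\tau^2 + \pi^*\sigma$ with $f=\alpha^{-1}$, combined with \eqref{eq_born_infeld_electrostatics}. Your added verification of spacelikeness via Cauchy--Schwarz is a harmless (and welcome) supplement that the paper leaves implicit.
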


    
Fix now a bounded domain $\Omega\Subset S$ and a spacelike function $\varphi \in C^\infty(\overline\Omega)$, and consider the Born-Infeld electrostatic problem
\begin{align}
    \begin{cases}\label{eq_born_infeld}
        \diver_\sigma \left(\frac{\alpha^{-1}Du}{\sqrt{1 - \alpha^{-2}|Du|^2}}\right) = \rho &\quad \text{in $\Omega$} \\
        u = \varphi &\quad\text{on $\partial\Omega$}.
    \end{cases}
\end{align}
By writing 
\[
\g = -(\pi^*\alpha)^{-2} \big( d \tau^2 + \pi^* \hat \sigma\big), \qquad \text{with } \, \hat \sigma = \alpha^2 \sigma, 
\]
and recalling Remark \ref{rem_cauchycompact_intro}, as a consequence of Theorem \eqref{main result} we obtain the following existence, uniqueness and regularity result for \eqref{eq_born_infeld} which generalizes \cite[Theorem 1.11]{BIMM}.

\begin{theorem}\label{te_forBI}
    Let $(S,\sigma)$ be a $2$-dimensional Riemannian manifold, let $\Omega\Subset S$ be a bounded smooth domain and fix a Radon measure $\rho = \rho_{\rm S} + \tfrac{d\rho}{dV_\sigma}\, dV_\sigma\in\mathscr{M}(\overline\Omega)$. Assume that there exists a compact subset $E\Subset \Omega$ such that 
    \[
    \mathscr{H}_\sigma^1(E) = 0, \qquad \supp \rho_{\rm S} \subseteq E, \qquad \tfrac{d\rho}{dV_\sigma}\in L^2_{\loc}(\Omega\backslash E). 
    \]
    Denote by $D$ and $|\cdot |$ the connection and norm of $(S,\sigma)$. Then, for any $\varphi\in C^\infty(\overline{\Omega})$ with $|D\varphi|< \alpha$ in $\overline\Omega$ there exists a unique solution $u$ to \eqref{eq_born_infeld}. The solution also satisfies the following properties.
    \begin{enumerate}
        \item $u$ has no light segments, namely, 
        \[
        |u(y)-u(x)|<\operatorname{d}_{\hat\sigma}(y,x) \qquad \forall \, x,y \in \Omega.
        \]
        where $\hat\sigma = \alpha^{2}\sigma$. 
        \item The energy density $w = \frac{1}{\sqrt{1- \alpha^{-2}\abs{Du}^2}}$ satisfies  
        \[
        \begin{array}{l}
        \disp w \in L^1(\Omega), \qquad w \ln(1+w) \in L^1_\loc(\Omega \backslash E), \\[0.3cm]
        \disp w\abs{\hat D^2u}_{\hat \sigma}^2 + w^3 \abs{\hat D^2u(\hat Du)}_{\hat \sigma}^2 + w^5 \big[\hat D^2u( \hat Du, \hat Du)\big]^2\in L^1_\loc(\Omega \backslash E),
        \end{array}
        \]
        where $\hat D$ is the connection of $\hat \sigma$. In particular, $w \abs{D^2 u}^2 \in L^1_\loc(\Omega \backslash E)$.
        \item There exists a closed set $\mathscr{S}\subseteq \Omega$ of zero measure such that $w\in L^\infty_\loc(\overline\Omega\backslash\mathscr{S})$.
    \end{enumerate}
    If, in addition, there exists a domain  $\Omega'\Subset\Omega\backslash E$ such that $\tfrac{d\rho}{dV_\sigma}\in C^1(\Omega')$ then $\mathscr{S}\cap \Omega' = \emptyset$ (so $|Du|< \alpha$ in $\Omega'$) and $u \in C^{2,\beta}_\loc(\Omega')$.
\end{theorem}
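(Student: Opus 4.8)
The plan is to recognize the Born--Infeld problem \eqref{eq_born_infeld} as a special case of the prescribed mean curvature problem \eqref{eq_BI} and then apply Theorem \ref{main result}. By the Proposition above, a function $u$ on $\overline\Omega$ solves \eqref{eq_born_infeld} exactly when its graph, regarded inside $\R\times S$ with the Lorentzian metric $\g = -(\pi^*\alpha)^{-2}d\tau^2 + \pi^*\sigma$, has mean curvature $\rho + \g(X,N_u)$, where $X = \bnabla_{\tilde T}\tilde T$ and $\tilde T = \alpha\partial_\tau$. Writing $\hat\sigma = \alpha^2\sigma$ one gets $\g = (\pi^*\alpha)^{-2}(-d\tau^2 + \pi^*\hat\sigma)$, so $\g$ is conformal to the ultrastatic product $-d\tau^2 + \pi^*\hat\sigma$ and has the form of Remark \ref{rem_cauchycompact_intro} with lapse $\alpha^{-1}$. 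To obtain a genuine globally hyperbolic ambient spacetime I would first extend $\varphi\in C^\infty(\overline\Omega)$ to a function on $S$ which is eventually constant and stays strictly spacelike (possible since $|D\varphi|_\sigma<\alpha$ on the compact set $\overline\Omega$), and modify $\sigma$ away from a neighbourhood $W$ of $\overline\Omega$ so that $\alpha^2\sigma$ becomes complete while remaining unchanged on $W$. Then $(\R\times S,\g)$ is globally hyperbolic of dimension $2+1$ (conformal invariance of the causal structure together with completeness of the slices), $\tau$ is a splitting time function, the graph of the extended $\varphi$ is a Cauchy hypersurface, and $\Sigma\doteq\{(\varphi(x),x):x\in\overline\Omega\}$ is a compact, achronal, smooth spacelike hypersurface with smooth boundary sitting inside it, with $\pi(\Sigma)=\overline\Omega$; by Remark \ref{rem_cauchycompact_intro} it satisfies \ref{cauchy compatto}.

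Next I would verify the hypotheses of Theorem \ref{main result}. Reading off the splitting $\g=\balpha^2(-d\tau^2+\bar\sigma)$ gives $\balpha=(\pi^*\alpha)^{-1}$ and $\bar\sigma=\pi^*\hat\sigma$, so along every graph $F_u$ one has $\sigma_u=\hat\sigma$ (independent of $u$), $\alpha_u=\alpha^{-1}$, and the tilt function $w_u=(1-|Du|_{\hat\sigma}^2)^{-1/2}$ equals the Born--Infeld energy density $w=(1-\alpha^{-2}|Du|_\sigma^2)^{-1/2}$. Hence the constant map $u\mapsto\vartheta$, where $\vartheta$ is $\rho$ multiplied by a fixed smooth positive function, is continuous into $\mathscr{M}(\Sigma)$; for $\Sigma'\Subset\mathring\Sigma\setminus E$ its restriction to $\Sigma'$ is (trivially bounded and) valued in $L^2(\Sigma')$ because $\tfrac{d\rho}{dV_\sigma}\in L^2_{\loc}(\Omega\setminus E)$; $\mathscr{H}^1_{\hat\sigma}(E)=0$ since $\alpha$ is smooth and positive on $\overline\Omega$; and $X$ is smooth on $\overline{D(\Sigma)}$. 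A direct computation with the weak formulation \eqref{eq_smooth_weak_solution_sigma}, in which the powers of $\alpha$ in the volume elements cancel against $\alpha_u^m$, shows that $u\in\Y$ is a weak solution of \eqref{eq_BI} for the data $(\vartheta,X)$ if and only if it solves \eqref{eq_born_infeld} weakly with boundary datum $\varphi$. Theorem \ref{main result} then yields a solution $u$, and translating its conclusions back --- using $\sigma_u=\hat\sigma$, $w_u=w$, and the fact that $\hat D^2u$ and $D^2u$ differ by terms bounded by $w|Du|^2\le w$ --- gives (1), (2) and the interior part of (3), while the final higher-regularity statement is precisely conclusion (iv) of Theorem \ref{main result} applied on domains $\Omega''\Subset\Omega'$ (where $\rho$ has $C^1$ density and $X$ is smooth) and then exhausting $\Omega'$. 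Uniqueness is not furnished by Theorem \ref{main result}: it follows instead from the strict convexity in $D\psi$ of the energy $\psi\mapsto\int_\Omega(\alpha-\sqrt{\alpha^2-|D\psi|_\sigma^2})\,dV_\sigma+\langle\rho,\psi\rangle$, any solution of \eqref{eq_born_infeld} being its unique minimizer, along the lines of \cite{BIMM}.

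The first point deserving real care --- and, I expect, the main obstacle --- is exactly the ambient construction of the first paragraph: producing a globally hyperbolic $2+1$ spacetime in which the graph of $\varphi$ over $\overline\Omega$ is an \emph{achronal}, compact spacelike hypersurface with compact Cauchy development. Here the strict inequality $|D\varphi|_\sigma<\alpha$ on $\overline\Omega$ is what makes the spacelike extension to a Cauchy surface possible, and Remark \ref{rem_cauchycompact_intro} then disposes of \ref{cauchy compatto}. The second point is the boundedness of $w$ up to $\partial\Omega$ needed in conclusion (3): Theorem \ref{main result}(iii) is an interior statement, so near $\partial\Omega$ one must argue separately. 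There, however, $\supp\rho_{\rm S}\subseteq E\Subset\Omega$, so the equation has absolutely continuous right-hand side and smooth, strictly spacelike boundary datum; a barrier construction matching $\varphi$ on $\partial\Omega$ --- i.e. a boundary gradient estimate in the spirit of Bartnik --- confines the solution near the boundary and gives $w\in L^\infty$ in a neighbourhood of $\partial\Omega$, whence $\mathscr{S}\Subset\Omega$ and $w\in L^\infty_{\loc}(\overline\Omega\setminus\mathscr{S})$.
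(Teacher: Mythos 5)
Your proposal is correct and follows the same route as the paper: rewrite $\g=(\pi^*\alpha)^{-2}\left(-d\tau^2+\pi^*\hat\sigma\right)$ with $\hat\sigma=\alpha^2\sigma$ so that $\sigma_u=\hat\sigma$ and $w_u$ coincides with the Born--Infeld energy density, invoke Remark \ref{rem_cauchycompact_intro} for Hypothesis \ref{cauchy compatto}, apply Theorem \ref{main result}, and derive uniqueness from the convexity of the electrostatic energy rather than from the PDE theorem. You in fact supply details the paper leaves implicit --- the extension of $(\varphi,\sigma)$ to obtain a genuine globally hyperbolic ambient, the matching of the two weak formulations, and the separate boundary argument needed to upgrade the interior statement of Theorem \ref{main result}(iii) to $w\in L^\infty_\loc(\overline\Omega\backslash\mathscr{S})$ --- all of which are appropriate and consistent with the paper's intent.
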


Uniqueness is guaranteed by the fact that, in this setting, \eqref{eq_born_infeld} is the Euler Lagrange equation of the convex functional
\begin{align*}
    I_\rho (u) = \int_{\Omega} \left( 1 - \sqrt{1 - \alpha^{-2}|Du|^2} \right) \alpha \, dV_\sigma + \int_\Omega u \, d\rho,
\end{align*}
thus comparison holds (see \cite[Lemma 2.12]{bpd}).

\resumetoc

\section{Proof of Proposition \ref{prop_appendix_sec}}\label{Appe_approx}

\stoptoc

\subsection*{Convolution} The first step is to construct a convolution operator for Radon measures on $\Sigma$. Recall that 
\begin{align*}
    \mathscr{C}^k(\Sigma) &= \left\{\mu\in\cM(\Sigma) \ : \ \mu << dV_{\sigma_u} \  \text{and} \ \frac{d\mu}{dV_{\sigma_u}}\in C^k(\Sigma), \quad \forall u\in\Y \cap C^k(\Sigma) \right\} \\
    \mathscr{L}^p(\Sigma') &= \left\{\mu\in\cM(\Sigma) \ : \ \mu << dV_{\sigma_u} \  \text{and} \ \frac{d\mu}{dV_{\sigma_u}}\in L^p(\Sigma'), \quad \forall u\in\Y \right\}
\end{align*}

Recall that given $\Sigma' \subseteq \Sigma$ we define
\[
\Sigma'_\delta = \big\{x \in \Sigma' \ : \ \di_{\sigma_\varphi}(x,\partial \Sigma) > \delta\big\}.
\]

\begin{lemma}\label{lem_convolution_op}
    Assume that \ref{cauchy compatto} holds. Then, there exists $\epsilon_0>0$ and a family of operators $\{\Upsilon_\epsilon\}_{\epsilon<\epsilon_0}$
    \begin{align*}
        \Upsilon_\epsilon \ : \ \cM(\Sigma) &\longrightarrow \cM(\Sigma) \\
        \mu &\longmapsto \mu_\epsilon
    \end{align*}
    such that 
    \begin{enumerate}
        \item $\Upsilon_\epsilon(\cM(\Sigma))\subseteq \mathscr{C}^\infty(\Sigma)$ for each $\epsilon\in(0,\epsilon_0$);
        \item\label{item_convolution_convergence} for each $\mu\in\cM(\Sigma)$, $\mu_\epsilon\overset{*}{\rightharpoonup} \mu$ in $\cM(\Sigma)$ as $\epsilon\to 0$;
        \item\label{item_measure_norm_bound} there exists a constant $c$ independent of $\epsilon$ such that
        \begin{align*}
            \norm{\mu_\epsilon}_{\cM(\Sigma)} \leq c \norm{\mu}_{\cM(\Sigma)} \qquad \forall \mu\in\cM(\Sigma)
        \end{align*}
        \item\label{item_C1_norm_bound} for each $\epsilon\in(0,\epsilon_0)$ there exists a constant $C_{\epsilon}$ such that for every $\mu\in\cM(\Sigma)$
        \begin{align*}
            \norm{\frac{d\mu_\epsilon}{dV_{\sigma_u}}}_{C^1(\Sigma)} \leq C_{\epsilon}\norm{\mu}_{\cM(\Sigma)} \qquad u\in\Y\cap C^1(\Sigma) ;
        \end{align*}
        and if $\mu\in\mathscr{C}^1(\Sigma')$ for some $\Sigma'\subseteq\Sigma$, then for every $\delta>0$ there exists a constant $C_{\delta}$ such that 
        \begin{align*}
            \norm{\frac{d\mu_\epsilon}{dV_{\sigma_u}}}_{C^1(\Sigma'_\delta)} \leq C_{\delta} \norm{\frac{d\mu}{dV_{\sigma_u}}}_{C^1(\Sigma')} \qquad \forall u\in\Y\cap C^1(\Sigma)
        \end{align*}
        for $\epsilon$ small enough.
        \item\label{item_Lp_norm_bound} if $\mu\in\mathscr{L}^p(\Sigma')$ for some domain $\Sigma' \subseteq \Sigma$, then for each $\delta>0$ there exists a constant $C_{p,\delta}$ such that
        \begin{align*}
            \norm{\frac{d\mu_\epsilon}{dV_{\sigma_u}}}_{L^p(\Sigma'_\delta)} \leq C_{p,\delta}\norm{\frac{d\mu}{dV_{\sigma_u}}}_{L^p(\Sigma')} \qquad\forall u\in\Y
        \end{align*}
        for $\epsilon$ small enough. 
    \end{enumerate}
\end{lemma}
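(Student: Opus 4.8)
The plan is to construct the operators $\Upsilon_\epsilon$ by transplanting the standard mollification from $\R^m$ to $\Sigma$ via a fixed finite atlas, using a partition of unity, and then to verify the five listed properties more or less in order. The main point is that the smoothing is performed in coordinate charts, so the key structural input is Lemma \ref{conseguenze di C}: the metrics $\sigma_u$, $u \in \Y$, are all uniformly comparable to $\sigma_\varphi$ in $C^0$ and (by (\ref{C_2})) in $W^{1,\infty}$, so coordinate mollification interacts uniformly well with every $\sigma_u$ at once.

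\medskip

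\textbf{Construction.} First I would fix a finite atlas $\{(U_a,\psi_a)\}_{a=1}^N$ of $\Sigma$ with $\psi_a(U_a) \supseteq \overline{B_2(0)}$ in $\R^m$, a subordinate partition of unity $\{\chi_a\}$ with $\chi_a \in C_c^\infty(U_a)$ and $\sum_a \chi_a = 1$, and a standard mollifier $\phi \in C_c^\infty(B_1(0))$, $\phi \ge 0$, $\int \phi = 1$, with $\phi_\epsilon(z) = \epsilon^{-m}\phi(z/\epsilon)$. For a measure $\mu \in \cM(\Sigma)$ write $\mu = \sum_a \chi_a \mu$; the push-forward $(\psi_a)_*(\chi_a\mu)$ is a compactly supported measure on $\R^m$, which we convolve with $\phi_\epsilon$ to get a smooth compactly supported density $g_{a,\epsilon} \in C_c^\infty(\R^m)$, and then set
\[
\mu_\epsilon \doteq \sum_{a=1}^N (\psi_a^{-1})_*\big(g_{a,\epsilon}\, dz\big) = \Upsilon_\epsilon(\mu),
\]
where $dz$ is Lebesgue measure. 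Choosing $\epsilon_0$ small enough that $\supp \phi_\epsilon * (\psi_a)_*(\chi_a\mu)$ stays inside $\psi_a(U_a)$ for all $a$ (possible since the supports of $\chi_a\mu$ are compact in $U_a$, uniformly), each $\mu_\epsilon$ is a finite signed measure with $C^\infty$ density with respect to Lebesgue measure in each chart, hence (since $\sigma_u \in C^\infty$ whenever $u \in \Y \cap C^\infty$ and the change-of-coordinates Jacobians are smooth) with respect to every $dV_{\sigma_u}$, $u \in \Y \cap C^\infty(\Sigma)$. Combined with the remark following the definition of $\mathscr{C}^k$ (the density being $C^k$ for one such $u$ is equivalent to it being $C^k$ for all), this gives (1): $\Upsilon_\epsilon(\cM(\Sigma)) \subseteq \mathscr{C}^\infty(\Sigma)$.

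\medskip

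\textbf{Properties (2)--(5).} Property (2) is the standard fact that $\phi_\epsilon * \nu \overset{*}{\rightharpoonup} \nu$ for compactly supported $\nu$ on $\R^m$, tested against $C_c$ functions, pulled back chart by chart and summed; since $\sum_a \chi_a = 1$ one recovers $\mu_\epsilon \overset{*}{\rightharpoonup} \mu$ on $\Sigma$. Property (3) follows from $\|\phi_\epsilon * \nu\|_{L^1(\R^m)} \le \|\nu\|_{\cM(\R^m)}$ together with the uniform comparability of the charted total variation to $\|\cdot\|_{\cM(\Sigma)}$ (the Jacobians $|\det D\psi_a|^{\pm 1}$ and the density of $dV_{\sigma_\varphi}$ in coordinates are bounded above and below on the relevant compact sets, uniformly, using (\ref{C_1})--(\ref{C_2})); this produces a dimensional constant $c$ depending only on the atlas and $\Sigma,\tau$, i.e. of the type allowed. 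Property (4) uses $\|\phi_\epsilon * \nu\|_{C^1(\R^m)} \le \|D\phi_\epsilon\|_{L^1}\,\|\nu\|_{\cM} = C_\epsilon \|\nu\|_{\cM}$ for the global bound (the $C^1$ norm on $\Sigma$ in the metric $\sigma_u$ is controlled by the coordinate $C^1$ norm via (\ref{C_2}), with constants uniform in $u \in \Y \cap C^1$), and for the localized bound one uses that when $\mu\llcorner\Sigma'$ has density $\frac{d\mu}{dV_{\sigma_u}} \in C^1(\Sigma')$, a point $x \in \Sigma'_\delta$ only ``sees'' the part of $\mu$ within $\sigma_\varphi$-distance $\lesssim\epsilon$ of $x$, which lies in $\Sigma'$ once $\epsilon < \delta/2$ say; there convolution is the honest mollification of a $C^1$ function and $\|\phi_\epsilon * h\|_{C^1} \le \|h\|_{C^1}$, so after accounting for the metric comparison constants (uniform in $u$) one gets $\|d\mu_\epsilon/dV_{\sigma_u}\|_{C^1(\Sigma'_\delta)} \le C_\delta \|d\mu/dV_{\sigma_u}\|_{C^1(\Sigma')}$. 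Property (5) is identical in spirit, now with Young's inequality $\|\phi_\epsilon * h\|_{L^p} \le \|\phi_\epsilon\|_{L^1}\|h\|_{L^p} = \|h\|_{L^p}$ replacing the $C^1$ estimate, and again the locality argument ($x \in \Sigma'_\delta$, $\epsilon$ small, the relevant mass is the $L^p$ density on $\Sigma'$) together with (\ref{C_3}) to pass between $L^p(\Sigma,\sigma_u)$ norms.

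\medskip

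\textbf{Main obstacle.} The genuinely delicate point is the \emph{bookkeeping of the partition of unity}: multiplying by $\chi_a$ before convolving does not commute with mollification, so near $x \in \Sigma'_\delta$ the density of $\mu_\epsilon$ is $\sum_a (\text{Jacobian factor}) \cdot \phi_\epsilon * (\chi_a \mu)$, and one must check that the smoothing of each piece $\chi_a \mu$ still only depends on $\mu\llcorner(\Sigma' \cap \text{supp}\,\chi_a)$ plus a smooth factor $\chi_a$, so that the $C^1$ (resp. $L^p$) bound survives with constants depending only on $\delta$, the atlas, and the partition. A clean way to handle this is to note that $\phi_\epsilon*(\chi_a\mu) = \chi_a(\phi_\epsilon*\mu) + [\phi_\epsilon, \chi_a]\mu$ where the commutator term is $O(\epsilon)$ in the relevant norm by the smoothness of $\chi_a$, or alternatively to simply absorb $\|\chi_a\|_{C^1}$ and the atlas Jacobian bounds into the constants $C_\epsilon$, $C_\delta$, $C_{p,\delta}$ — which is legitimate since all of these are allowed to depend on $\Sigma$, $\tau$ (hence on the fixed atlas and partition) and on $\delta$ (or $\epsilon$, or $p$). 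The rest is routine real analysis; no new ideas beyond Lemma \ref{conseguenze di C} are needed.
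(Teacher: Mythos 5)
Your construction and verification coincide with the paper's own proof: chart-by-chart mollification of $\zeta_i\mu$ pushed to $\R^m$, pulled back and summed, with all constants controlled via the uniform metric comparisons of Lemma \ref{conseguenze di C}, the bound $\|D\Phi_\epsilon\|_{L^1}\lesssim\epsilon^{-1}$ for (4), Young's inequality and locality for the restricted estimates, and absorption of the partition-of-unity factors into the constants exactly as you propose in your second option. The only (cosmetic) difference is that the paper first extends $\Sigma$ to a larger spacelike hypersurface $\Sigma^1$ so that all charts are interior charts and mollification near $\partial\Sigma$ has room, restricting the result back to $\Sigma$ at the end — a detail you should add since your boundary charts would otherwise not accommodate the smeared support.
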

\begin{proof}
    Extend $\Sigma$ to a smooth, spacelike hypersurface $\Sigma^1$ with $\Sigma \Subset \mathring{\Sigma}^1$. Choose also bounded open sets $V_i\Subset U_i\Subset \mathring{\Sigma}^1$, $1\leq i\leq n$, such that
    \begin{align*}
        \Sigma \subset V \doteq \bigcup_{i=1}^n V_i, \qquad \text{$U_i$ supports a chart $\varphi_i:U_i\to \R^m$ with $\varphi_i(U_i) = \R^m$.}
    \end{align*}
    Let $\{\zeta_i\}_i$ be a partition of unity subordinated to $V_i$ and set $\tilde\zeta_i = \zeta_i \circ \varphi_i^{-1}\in C_c(\R^m)$. Since $\supp \tilde\zeta \cap \varphi_i(\Sigma\cap U_i)\Subset \varphi_i(V_i)$, there exists $\epsilon_0>0$ such that
    \begin{align}\label{eq_lemma_conv_1}
        \overline{B_\epsilon(\supp\tilde\zeta_i\cap\varphi_i(V_i))} \Subset \varphi_i(V_i) \qquad\forall\epsilon \le \epsilon_0, \quad \forall 1\leq i\leq n.
    \end{align}
    where $B_\epsilon$ is taken with respect to the Euclidean metric on $\R^m$.
    We regard any given $\mu\in\cM(\Sigma)$ as an element of $\cM(V)$ by pushing forward via the inclusion $\Sigma\hookrightarrow V$ and let $\mu_i = (\varphi_i)_*(\zeta_i\mu)\in\cM(\R^m)$ so that
    \begin{align*}
        \int_{\R^m} f \, d\mu_i \doteq \int_{U_i} (\varphi_i^*f)\zeta_i \, d\mu\qquad \forall f\in C_c(\R^m)
    \end{align*}
    and $\norm{\mu_i}_{\cM(\R^m)} = \norm{\zeta_i \mu}_{\cM(\Sigma)} \le \norm{ \mu}_{\cM(\Sigma)}$. Fix a regularization kernel $\{\Phi_\epsilon\}$ in $\R^m$ as usual:
    \begin{align*}
        \Phi_\epsilon (x) = \frac{1}{\epsilon^n}\Phi\left(\frac{x}{\epsilon}\right) \qquad\Phi\in C_c^\infty(\R^m) \qquad\supp\Phi = \overline{B_1(0)} \qquad \norm{\Phi}_{L^1(\R^m)} = 1
    \end{align*}
    and let $\Phi_\epsilon * \mu_i$ the measure on $\R^m$ defined by
    \begin{align*}
        \int_{\R^m}f \, d(\Phi_\epsilon * \mu_i) \doteq \int_{\R^m} f(x)\int_{\R^m} \Phi_\epsilon(x-y) \, d\mu_i(y) \, dx,
    \end{align*}
    that is, the classical regularization of $\mu_i$. Notice that, by \eqref{eq_lemma_conv_1}, $\Phi_\epsilon*\mu_i$ is well defined and supported in $\varphi_i(V_i)$ for $\epsilon<\epsilon_0$. We then define $\Upsilon_\epsilon$ by
    \begin{align*}
        \Upsilon_\epsilon\mu = \mu_\epsilon \doteq \left(\sum_{i=1}(\varphi_i^{-1})_*(\Phi_\epsilon*\mu_i)\right) \llcorner \, \Sigma\in\cM(\Sigma).
    \end{align*}
    
    If we fix $u\in\Y\cap C^\infty(\Sigma)$ then by the change of variables formula we have
    \begin{align*}
        \frac{d(\varphi_i^{-1})_*(\Phi_\epsilon*\mu_i)}{dV_{\sigma_u}} = \frac{1}{\sqrt{\det\sigma_u^{\varphi_i}}}\left(\frac{d(\Phi_\epsilon * \mu_i)}{dx}\circ\varphi_i\right) \qquad \text{on } \, \Sigma, 
    \end{align*}
    where $\det \sigma_u^{\varphi_i}:U_i\to\R$ is the determinant of the matrix associated to $\sigma_u$ via the chart $\varphi_i$. Therefore, 
    \[
    \frac{d(\varphi_i^{-1})_*(\Phi_\epsilon*\mu_i)}{dV_{\sigma_u}}
    \]
    inherits the same regularity as $u$ on $\Sigma$, so by definition $\mu_\epsilon\in\mathscr{C}^\infty(\Sigma)$ with density
    \begin{align*}
        \frac{d\mu_\epsilon}{dV_{\sigma_u}} = \sum_{i=1}^n\frac{1}{\sqrt{\det\sigma_u^{\varphi_i}}}\left(\frac{d(\Phi_\epsilon*\mu_i)}{dx}\circ\varphi_i\right)\Bigg\vert_{\Sigma}.
    \end{align*}
    
    To prove \eqref{item_C1_norm_bound}, first notice that we have the following estimate for the Euclidean gradient of the density of $\Phi_\epsilon*\mu_i$
    \begin{align*}
        \abs{D_{\R^m}\left(\frac{d(\Phi_\epsilon*\mu_i)}{dx}\right)(y)} &= \abs{\int_{\R^m} D_{\R^m}\Phi_\epsilon(y-z) \, d\mu_i(z)} \\
        &\leq \int_{\R^m}\epsilon^{-n-1}\abs{D_{\R^m}\Phi\left(\tfrac{y-z}{\epsilon}\right)} \, d\mu_i(z) \\
        &\leq \epsilon^{-1}\norm{D_{\R^m}\Phi}_{L^1(\R^m)}\norm{\mu_i}_{\cM(\R^m)} \\
        &\leq \epsilon^{-1}\norm{D_{\R^m}\Phi}_{L^1(\R^m)}\norm{\mu}_{\cM(\Sigma)}.
    \end{align*}
    Also, as $u\in\Y \cap C^1(\Sigma)$ and \ref{cauchy compatto} holds, the $C^1(\Sigma)$ norm of $\sqrt{\det\sigma_{u}^{\varphi_i}}$ is bounded by a constant that depends only on $\Sigma$ and $\tau$, and as a consequence
    \begin{align*}
        \abs{D_{\sigma_u}\left(\frac{d\mu_\epsilon}{dV_{\sigma_u}}\right)} \leq Cn\epsilon^{-1} \norm{D_{\R^m}\Phi}_{L^1(\R^m)}\norm{\mu}_{\cM(\Sigma)} \doteq C_{\epsilon} \norm{\mu}_{\cM(\Sigma)}.
    \end{align*}
    If moreover $\mu\in \mathscr{C}^1(\Sigma')$ then $\mu_i$ has $C^1$ density and, since $\varphi_i(\Sigma_\delta'\cap V_i)\Subset \varphi_i(\Sigma'\cap V_i)$, by standard properties of the convolution on $\R^m$, for every $\delta>0$ we have
    \begin{align*}
        \norm{\partial_i\left(\frac{d(\Phi_\epsilon*\mu_i)}{dx}\right)}_{C(\varphi_i(\Sigma'_\delta\cap V_i))} &= \norm{\Phi_\epsilon*\partial_i\frac{d\mu_i}{dx}}_{C(\varphi_i(\Sigma'_\delta\cap V_i))} \\ &\leq \norm{\Phi_\epsilon}_{L^1(\R^m)}\norm{\frac{d\mu_i}{dx}}_{C^1(\varphi_i(\Sigma'\cap V_i))}
    \end{align*}
    for $\epsilon$ small enough. Now since by Lemma \eqref{conseguenze di C} we have an a priori $C^1$ bound on $\sqrt{\det\sigma_u^{\varphi_i}}$ for any $u\in\Y \cap C^1(\Sigma)$ and $1\leq i\leq n$, the second estimate in \eqref{item_C1_norm_bound} follows.
    
    Similarly, 
    \begin{align*}
        \norm{\frac{d\mu_\epsilon}{dV_{\sigma_u}}}^p_{L^p(\Sigma'_\delta,\sigma_u)} &= \int_{\Sigma'_\delta}\abs{\sum_{i=1}^n \frac{1}{\sqrt{\det\sigma_u^{\varphi_i}}}\left(\frac{d(\Phi_\epsilon * \mu_i)}{dx}\circ\varphi_i\right) }^p \, dV_{\sigma_u} \\
        &\leq C_p\sum_{i=1}^n \norm{\frac{d(\Phi_\epsilon*\mu_i)}{dx}}_{L^p(\varphi_i(\Sigma_\delta'\cap V_i))}^p \\ 
        &\leq C_{p,\delta}\sum_{i=1}^n \norm{\frac{d\mu_i}{dx}}^p_{L^p(\varphi_i(\Sigma'\cap V_i))} \\
        &\leq C_{p,\delta}\norm{\frac{d\mu}{dV_{\sigma_u}}}_{L^p(\Sigma')}^p
    \end{align*}
    and also \eqref{item_Lp_norm_bound} is shown.
    
    Now we prove \eqref{item_convolution_convergence}. Given a test function $\eta\in C(\Sigma)$, consider an extension of $\eta$ to a function in $C_c(V)$ with the same norm. By definition, setting $\tilde\eta_i=\eta\circ\varphi_i^{-1}$ we have
    \begin{align*}
        \int_\Sigma \eta \, d\mu_\eps = \sum_{i=1}^n \int_{\R^m} \tilde\eta_i(x) \int_{\R^m} \Phi_\epsilon(x-y) \, d\mu_i(y) \, dx
    \end{align*}
    and
    \begin{align*}
        \int_\Sigma \eta \, d\mu = \sum_{i=1}^n \int_\Sigma \eta\zeta_i \, d\mu = \sum_{i=1}^n \int_{\R^m} \tilde\eta_i \, d\mu_i,
    \end{align*}
    hence
    \begin{align*}
        \abs{\int_\Sigma \eta \, d\mu_\epsilon - \int_\Sigma \eta \, d\mu} &\leq \sum_{i=1}^n\abs{\int_{\R^m} \tilde\eta(x) \int_{\R^m}\Phi_\epsilon(x-y) \, d\mu_i(y)\, dx - \int_{\R^m}\tilde\eta(y) \, d\mu_i(y) } \\
        &= \sum_{i=1}^n\abs{   \int_{\R^m} \tilde\eta_i(y + z) \int_{\R^m}\Phi_\epsilon(z) \, d\mu_i(y) \, dz -\int_{\R^m}\int_{\R^m}\tilde\eta_i(y) \Phi_\epsilon(z) \, \mu_i(y)\, dz } \\
        &=\sum_{i=1}^n \abs{\int_{\R^m} \int_{\R^m} \Phi_\epsilon(z)[\tilde\eta_i(y+z) - \tilde\eta_i(y)] \, \, d\mu_i(y) \, dz }.
    \end{align*}
    Now, being $\tilde\eta_i$ continuous on the compact set $\overline{B_{\epsilon_0}(\supp\tilde\zeta_i \cap \varphi_i(\Sigma \cap U_i))}$ it is uniformly continuous there, that is, for every $\delta>0$ there exists $\epsilon = \epsilon(\delta,\eta)\in(0,\epsilon_0)$ such that, for every $y\in \supp\tilde\zeta_i \cap \varphi(\Sigma\cap U_i)$ and $z\in\R^m$ one has
    \begin{align*}
        |z|<\epsilon \ \Longrightarrow \ |\tilde\eta_i(y+z) -\tilde\eta_i(y)| \leq \delta
    \end{align*}
    for every $1\leq i\leq n$. We have thus showed that
    \begin{align}\label{eq_limit}
        \forall\delta>0 \quad \exists\epsilon(\delta,\eta)>0 \quad : \quad \epsilon<\epsilon(\delta,\eta) \quad \Longrightarrow \quad
        \abs{\int_\Sigma \eta \, d\mu - \int_\Sigma \eta \, d\mu_\epsilon} \leq \delta n\norm{\mu}_{\mathscr{M}(\Sigma)},
    \end{align}
    and by the arbitrariness of $\delta>0$ we have $\mu_\epsilon \overset{*}{\rightharpoonup}\mu$ as $\epsilon\to 0$.
    
    Finally, to show \eqref{item_measure_norm_bound} notice that
    \begin{align*}
        \abs{\int_\Sigma \eta \, d\mu_\epsilon} \leq \sum_{i=1}\abs{\int_{\R^m} \tilde\eta_i(x) \int_{\R^m} \Phi_\epsilon(x - y) \, d\mu_i(y) \, dx } \leq n\norm{\eta}_{C(\Sigma)} \norm{\mu}_{\cM(\Sigma)}
    \end{align*}
    and thus $\norm{\mu_\epsilon}_{\cM(\Sigma)}\leq n\norm{\mu}_{\cM(\Sigma)}$ for each $\epsilon$.
\end{proof}

Now we are ready to prove Proposition \ref{prop_appendix_sec}, that we rewrite for the convenience of the reader.

\begin{prop}\label{prop_appendix}
    Assume \ref{cauchy compatto}. Let $\rho: (\Y, \|\cdot \|_{C(\Sigma)}) \to\cM(\Sigma)$ be a continuous map. There exists a sequence of functions
    \begin{align*}
        \rho_j \ : \ \Y \longrightarrow \mathscr{C}^\infty(\Sigma)
    \end{align*}
    such that the following hold for $j>>1$.
    \begin{enumerate}[label=(\roman*)]
        \item
        For any $\{u_j\}\subseteq\Y$ we have
        \begin{align*}
            u_j \to u \quad \text{in $C(\Sigma)$} \quad \Longrightarrow \quad \rho_j(u_j)\overset{*}{\rightharpoonup} \rho(u) \quad \text{in $\cM(\Sigma)$}.
        \end{align*}
        \item
        There exists a constant $C_\rho$ such that
        \begin{align*}
            \norm{\rho_j(u)}_{\cM(\Sigma)} \leq C_\rho \qquad \forall u\in \Y.
        \end{align*}
        \item
        For each $j$ there exists a constant $C_{\rho,j}$ such that
        \begin{align*}
            \norm{\frac{d\rho_j(u)}{dV_{\sigma_u}}}_{C^1(\Sigma)} \leq C_{\rho,j} \qquad \forall u\in\Y\cap C^1(\Sigma)
        \end{align*}
        and if $\rho$ is valued in $\mathscr{C}^1(\Sigma')$ for some $\Sigma'\Subset\Sigma$, then  for every $\delta>0$ there exists a constant $C_{\Sigma',\delta}$ such that
        \begin{align*}
            \norm{\frac{d\rho_j(u)}{dV_{\sigma_u}}}_{C^1(\Sigma'_\delta)} \leq C_{\Sigma',\delta} \norm{\frac{d\rho(u)}{dV_{\sigma_u}}}_{C^1(\Sigma')} \qquad \forall u\in\Y\cap C^1(\Sigma).
        \end{align*}
        \item
        If $\rho$ is valued in $\mathscr{L}^p(\Sigma')$ for some $\Sigma' \Subset \Sigma$ and $p\in[1,\infty)$, then for every $\delta>0$ there exists a constant $C_{p,\Sigma',\delta}$ such that 
        \begin{align*}
            \norm{\frac{d\rho_j(u)}{dV_{\sigma_u}}}_{L^p(\Sigma'_\delta)} \leq C_{p,\Sigma',\delta}\norm{\frac{d\rho(u)}{dV_{\sigma_u}}}_{L^p(\Sigma')} \qquad\forall u\in\Y.
        \end{align*}
    \end{enumerate}
    Furthermore if $X\in\X(\overline{D(\Sigma)})$ is a continuous vector field, there exists a sequence of smooth vector fields $\{X_j\}$ and a constant $\Lambda\geq 0$ such that 
    \begin{align}\label{eq_convergence_Jj}
        X_j\to X \quad \text{in $C(\overline{D(\Sigma)})$}
    \end{align}
    and
    \begin{align}\label{eq_Hp_Lambda_app}
        \abs{\g(X_j,N_{u})} \leq \Lambda w_{u} \qquad \forall u\in\Y
    \end{align}
    where $w_{u} = - \g(T,N_{u})$ and $N_{u}$ is the future pointing unit normal to the graph of $u$. Moreover, if $X$ is $C^1$ on a compact subset $K\subseteq\overline{D(\Sigma)}$ then the $C^1$ norm of each $X_j$ on $K$ satisfies
    \begin{align}
        \norm{X_j}_{1,K} \leq \Lambda.
    \end{align}
        
\end{prop}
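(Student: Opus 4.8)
The construction of $\rho_j$ is essentially a diagonal extraction argument built on top of the convolution operators $\Upsilon_\epsilon$ from Lemma~\ref{lem_convolution_op}. First I would fix a sequence $\epsilon_k \downarrow 0$ and, for each $u\in\Y$, tentatively set $\rho_j(u) \doteq \Upsilon_{\epsilon_j}(\rho(u))$. Properties (ii), (iii) and (iv) then follow immediately from items \eqref{item_measure_norm_bound}, \eqref{item_C1_norm_bound} and \eqref{item_Lp_norm_bound} of Lemma~\ref{lem_convolution_op}, using the continuity of $\rho:(\Y,\|\cdot\|_{C(\Sigma)})\to\cM(\Sigma)$ together with the compactness of $\Y$ in $C(\Sigma)$ (Lemma~\ref{conseguenze di C}, \eqref{C_5}) to get the uniform bound $\sup_{u\in\Y}\|\rho(u)\|_{\cM(\Sigma)}\le C_\rho$ needed for (ii). The delicate point is (i): if $u_j\to u$ in $C(\Sigma)$ we must show $\Upsilon_{\epsilon_j}(\rho(u_j))\overset{*}{\rightharpoonup}\rho(u)$, and here both the measure \emph{and} the regularization parameter vary simultaneously, so item \eqref{item_convolution_convergence} of the Lemma (which is for a fixed measure) does not apply directly.

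\textbf{Handling the main obstacle.} To deal with (i), I would split
\[
\Upsilon_{\epsilon_j}(\rho(u_j)) - \rho(u) = \big(\Upsilon_{\epsilon_j}(\rho(u_j)) - \Upsilon_{\epsilon_j}(\rho(u))\big) + \big(\Upsilon_{\epsilon_j}(\rho(u)) - \rho(u)\big).
\]
The second bracket tends to $0$ weak-$*$ by Lemma~\ref{lem_convolution_op}\eqref{item_convolution_convergence}. For the first bracket, since $\Upsilon_{\epsilon}$ is \emph{linear} and bounded on $\cM(\Sigma)$ with operator norm $\le c$ uniform in $\epsilon$, it suffices to test against $\eta\in C(\Sigma)$: revisiting the proof of \eqref{item_convolution_convergence}, one sees that $\int_\Sigma \eta\, d(\Upsilon_\epsilon\mu) = \int_\Sigma (\Phi_\epsilon \star_\sigma \eta)\, d\mu$ for a suitable ``adjoint'' smoothing of $\eta$ (built chartwise), and crucially $\Phi_\epsilon\star_\sigma\eta \to \eta$ uniformly on $\Sigma$ with $\|\Phi_\epsilon\star_\sigma\eta\|_{C(\Sigma)}\le c\|\eta\|_{C(\Sigma)}$. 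Hence
\[
\Big|\int_\Sigma \eta\, d\big(\Upsilon_{\epsilon_j}(\rho(u_j)) - \Upsilon_{\epsilon_j}(\rho(u))\big)\Big| = \Big|\int_\Sigma (\Phi_{\epsilon_j}\star_\sigma\eta)\, d\big(\rho(u_j)-\rho(u)\big)\Big| \le \|\Phi_{\epsilon_j}\star_\sigma\eta\|_{C(\Sigma)}\, \|\rho(u_j)-\rho(u)\|_{??},
\]
which is not quite enough since $\|\cdot\|_{??}$ would have to be a norm metrizing weak-$*$ convergence. The cleaner route: $\rho(u_j)\overset{*}{\rightharpoonup}\rho(u)$ and $\Phi_{\epsilon_j}\star_\sigma\eta\to\eta$ in $C(\Sigma)$, so by a standard ``converging-test-function-against-weakly-converging-measure'' argument (boundedness of $\{\rho(u_j)\}$ in $\cM(\Sigma)$ from (ii), plus triangle inequality splitting $\Phi_{\epsilon_j}\star_\sigma\eta - \eta$ off) the pairing converges to $\int_\Sigma\eta\,d\rho(u)$. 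One should be careful that the extraction of $\epsilon_j$ may need to be done \emph{after} knowing (iii) and (iv) require no speed condition, whereas (i) only needs $\epsilon_j\to 0$; so a single sequence $\epsilon_j\to 0$ works for everything and no diagonalization over $u$ is actually needed — the uniformity in $u$ is already packaged inside Lemma~\ref{lem_convolution_op}.

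\textbf{The vector field part.} For $X$, the statement is separate and easier: take a partition of unity on a neighbourhood of $\overline{D(\Sigma)}$ and mollify $X$ chartwise to get smooth $X_j\to X$ in $C(\overline{D(\Sigma)})$, with $\|X_j\|_{1,K}$ controlled by $\|X\|_{1,K}$ when $X$ is $C^1$ near $K$ (standard convolution estimates). The only structural point is \eqref{eq_Hp_Lambda_app}: by \eqref{C_9} of Lemma~\ref{conseguenze di C} applied to $X$ one has $|\g(X,N_u)|\le C w_u$, and since $X_j\to X$ uniformly on the compact set $\overline{D(\Sigma)}$, the $X_j$ are equibounded there, say $\|X_j\|\le \Lambda_0$; then \eqref{eq_bound_euclidean_norm} gives $|\g(X_j,N_u)|\le\sqrt2\,\Lambda_0 w_u$, so \eqref{eq_Hp_Lambda_app} holds with $\Lambda = \sqrt2\,\Lambda_0$ (enlarging $\Lambda$ if necessary to also dominate $\sup_j\|X_j\|_{1,K}$). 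I expect the genuinely non-routine step to be the bookkeeping in (i) — making sure the ``adjoint'' smoothing of test functions in Lemma~\ref{lem_convolution_op}'s proof is both uniformly bounded in $C(\Sigma)$ and strongly convergent there, so that it can be paired against the weakly convergent $\rho(u_j)$ — while everything else is a direct application of the already-established Lemma.
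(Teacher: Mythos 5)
Your proposal is correct and follows essentially the same route as the paper: define $\rho_j = \Upsilon_{\epsilon_j}\circ\rho$, read off (ii)--(iv) from Lemma~\ref{lem_convolution_op} together with the compactness of $\Y$ in $C(\Sigma)$, and handle (i) by a triangle-inequality splitting whose key input is that the smoothing error on a fixed test function is controlled uniformly in the measure (equivalently, that the adjoint smoothing of $\eta$ converges to $\eta$ in $C(\Sigma)$ with uniform bounds). The paper arranges the splitting as $(\rho(u_j)-\rho(u)) + (\Upsilon_{\epsilon_j}\rho(u_j)-\rho(u_j))$ rather than your version, but the two are equivalent once one notes that the threshold $\epsilon(\delta,\eta)$ in the Lemma's proof depends only on $\eta$; the vector-field part is handled identically.
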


\begin{proof}
    Fix a sequence $\epsilon_j \to 0^+$ and define
    \[
    \rho_j = \Upsilon_{\epsilon_j} \circ \rho,
    \]
    where $\Upsilon_{\epsilon_j}$ is the operator in Lemma \ref{lem_convolution_op}. Since $\rho$ is continuous and $\Y$ is weakly compact in $C(\Sigma)$, so is $\rho(\cM(\Sigma))$ and by Banach-Steinhaus $\norm{\rho(u)}_{\cM(\Sigma)}\leq C_\rho$ for every $u\in\Y$. Hence, \textit{(ii)} follows by using \eqref{item_measure_norm_bound} in Lemma \ref{lem_convolution_op}.
    
    Having fixed $\eta\in C(\Sigma)$, by the continuity of $\rho$ and by \eqref{eq_limit} applied to $\mu = \rho(u_j)$ (so that $\mu_{\epsilon_j}=\rho_j(u_j)$) we deduce that  for $\delta>0$ there exists $j_0 = j_0(\delta,\eta)$ such that, for any $j\geq j_0$ it holds
    \begin{align*}
        \abs{\int_\Sigma \eta \, d\rho_j(u_j) - \int_\Sigma \eta \, d\rho(u)} &\leq \abs{\int_\Sigma \eta \, d\rho(u_j) - \int_\Sigma \eta \, d\rho(u)} + \abs{\int_\Sigma \eta \, d\rho_j(u_j) - \int_\Sigma \eta \, d\rho(u_j)} \\
        &\leq \delta + \delta n \norm{\rho(u_j)}_{\cM(\Sigma)} \\
        &\leq \delta(1 + nC_\rho)
    \end{align*}
    and also \textit{(i)} is proved. 

    Items $(3),(4),(5)$ immediately follow by Lemma \ref{lem_convolution_op}. 

    For the last statements, take $\{X_j\}$ to be any approximation of $X$ by Riemannian convolution in the metric $\ee$ defined in \eqref{eq_eucl_metric}. While \eqref{eq_convergence_Jj} follows by standard properties of Riemannian convolution, assertion \eqref{eq_Hp_Lambda_app} follows by \eqref{eq_bound_euclidean_norm}
    and the fact that $\norm{X_j}$ is uniformly bounded in $\overline{D(\Sigma)}$.
\end{proof}

\resumetoc

\printbibliography
\nocite{*}

\vspace{0.5cm}

\noindent \textbf{Conflict of Interests.} The authors have no conflict of interest. \\[0.2cm]
\noindent \textbf{Data availability statement.} No data was generated by this research.

\vspace{0.5cm}

\end{document}